\documentclass[reqno]{amsart}
\textwidth 16cm
\setlength {\evensidemargin} {0 cm}
\setlength {\oddsidemargin} {0 cm}
\usepackage[utf8]{inputenc}
\usepackage[T1]{fontenc}
\usepackage{amsthm}
\usepackage{amsmath}
\usepackage{amssymb}
\usepackage{nicefrac}
\usepackage{xcolor}
\usepackage{mathtools} 
\usepackage{color}
\usepackage{hyperref} 
\usepackage{xfrac}
\usepackage[shortlabels]{enumitem}
\hypersetup{
	colorlinks,
	linkcolor={red!50!black},
	citecolor={blue!50!black},
	urlcolor={blue!20!black}
}
\usepackage[nameinlink, capitalise, noabbrev]{cleveref}

\pagestyle{plain}

\DeclareMathOperator\Sp{\sigma}

\DeclareMathOperator\supp{supp}
\DeclareMathOperator{\tr}{tr}
\DeclareMathOperator{\Aff}{Aff}

\newtheorem{theorem}{Theorem}

\newtheorem{lem}[theorem]{Lemma}
\newtheorem{kor}[theorem]{Corollary}
\newtheorem{prop}[theorem]{Proposition}
\theoremstyle{definition}
\newtheorem{rem}[theorem]{Remark}

\crefname{kor}{Corollary}{Corollaries}
\crefname{lem}{Lemma}{Lemmata}

\numberwithin{equation}{section}
\numberwithin{theorem}{section}
\numberwithin{definition}{section}

\begin{document}
	\title{Self-normalized Sums in Free Probability Theory}
	\author{Leonie Neufeld}
	
	\title{Self-normalized Sums in Free Probability Theory}
	\thanks{Fakult\"at f\"ur Mathematik,
		Universit\"at Bielefeld, 33501 Bielefeld, 
		Germany;  lneufeld@math.uni-bielefeld.de}
	
	\thanks{Funded by the Deutsche Forschungsgemeinschaft (DFG, German Research Foundation) -- IRTG 2235 -- 282638148.}

	\subjclass
	{46L54, 60F05} 
	\keywords  {free probability, self-normalized sums,
		central limit theorem, Berry–Esseen theorem, superconvergence} 
	
	\begin{abstract}
		We show that the distribution of self-normalized sums of free self-adjoint random variables converges weakly to Wigner's semicircle law under appropriate conditions and estimate the rate of convergence in terms of the Kolmogorov distance. In the case of free identically distributed self-adjoint bounded random variables, we retrieve the standard rate of order $n^{-\nicefrac{1}{2}}$ up to a logarithmic factor, whereas we obtain a rate of order $n^{-\nicefrac{1}{4}}$ in the corresponding unbounded setting. These results provide free versions of certain self-normalized limit theorems in classical probability theory. 
	\end{abstract}	
	
	\maketitle

	\section{Introduction}	 \label{SNS sec introduction}
	Let $(X_i)_{i \in \mathbb{N}}$ be a sequence of free self-adjoint non-commutative random variables and define 
	\begin{align}  \label{def SNS}
		S_n := \sum_{i=1}^{n} X_i, \qquad V_n^2 := \sum_{i=1}^{n} X_i^2, \qquad U_n := 	V_n^{-\nicefrac{1}{2}}S_n V_n^{-\nicefrac{1}{2}}
	\end{align}
	for any $n \in \mathbb{N}.$
	The aim of this work is to prove that the distribution of the \textit{self-normalized sum} $U_n$ 
	converges weakly to Wigner's semicircle law as $n \rightarrow \infty$  and to determine the corresponding rate of convergence. 	
	
	\subsection{Self-normalized sums in classical probability theory} \label{SNS sec: Introduction classical case}
	Before we state our results, let us discuss self-normalized sums in classical probability theory. 
	
	Given a sequence of classical random variables $(X_i)_{i \in \mathbb{N}}$, let $S_n$ and $V_n^2$ be defined as in \eqref{def SNS}. Under the convention $S_n/V_n := 0$ on the event $\{V_n=0\}$, the quotient $\smash{S_n/V_n = 	V_n^{-\nicefrac{1}{2}}S_n V_n^{-\nicefrac{1}{2}}}$  is called a self-normalized sum. The interest in such self-normalized sums is twofold: First, it is known that the random variable $S_n / V_n$ is closely related to the classical Student's $t$-statistic, thus making it relevant in mathematical statistics. Second, self-normalized sums often exhibit an improved limiting behavior and allow limit theorems under weaker distributional assumptions than sums that are normalized in the usual way, i.e.\@ deterministically as in the central limit theorem. The intuition behind this phenomenon is that the denominator $V_n$ weakens irregular fluctuations of the numerator $S_n$ in a stronger way than the usual normalization. 
	
	In the following, we collect some results on the asymptotics of self-normalized sums. We start with an intuitive approach to determine the limiting distribution of $S_n/V_n$ under the exemplary assumption that the random variables $(X_i)_{i \in \mathbb{N}}$ are independent and identically distributed (i.i.d.\@) with mean zero and unit variance. According to the central limit theorem, the sum $n^{-\nicefrac{1}{2}}S_n$ is asymptotically standard normal. The weak law of large numbers states that $n^{-1}V_n^2$ converges in probability to $1$ as $n \rightarrow \infty$. Lastly, an application of Slutsky's theorem yields that $S_n/V_n$ converges in distribution to the standard normal distribution as $n \rightarrow \infty.$
	
	Since the above-mentioned argument considers numerator and denominator separately, the concept of self-normalization hardly comes into effect. In particular, the previously required moment conditions are unnecessarily strong as can be seen by the following self-normalized central limit theorem established by Gin\'{e}, Götze, and Mason \cite{Gine1997}: Given a sequence $(X_i)_{i \in \mathbb{N}}$ of i.i.d.\@ random variables, the self-normalized sum $S_n/V_n$ is asymptotically standard normal if and only if $X_1$ is in the domain of attraction of the normal law and $\mathbb{E}X_1 = 0$. This proves a conjecture made by Logan, Mallows, Rice, and Shepp  \cite{Logan1973}. As already observed by Maller \cite{Maller1981} among others, the if-part of the stated equivalence follows easily by a variation of Raikov's theorem. The proof of the only-if-direction is more involved and makes use of the fact that the moments of $S_n/V_n$ converge to those of the standard normal distribution, whenever $S_n/V_n$ is asymptotically standard normal. It is worth noting that in general the convergence of moments does not hold in the central limit theorem.  
	
	Mason \cite{Mason2005} and Shao \cite{Shao2018} generalized the previously stated self-normalized central limit theorem to independent non-identically distributed (non-i.d.\@) random variables. The rate of convergence to normality in terms of the Kolmogorov distance was analyzed by Bentkus and Götze \cite{Bentkus1996} in the i.i.d.\@ case and by Bentkus, Bloznelis, and Götze \cite{Bentkus1996a} in the independent non-i.d.\@ setting. Under the assumption of zero mean and finite third absolute moments, both works provide a rate of convergence of the distribution of $S_n/V_n$ to the standard normal distribution, which coincides with the corresponding rate in the Berry-Esseen theorem, i.e.\@ with $n^{-\nicefrac{1}{2}}$ or the third Lyapunov fraction. In particular, Lindeberg's condition is sufficient for $S_n/V_n$ to be asymptotically standard normal.  
	
	We refer to \cite{Pena2009,Shao2013} for extensive overviews of the theory of self-normalized sums, including the analysis of non-uniform Berry-Esseen bounds and self-normalized large deviations as well as the study of all possible (non-Gaussian) limiting distributions of self-normalized sums and generalizations of such sums to the setting of random vectors. 

	\subsection{Self-normalized sums in free probability theory -- Main results} \label{SNS sec: Introduction free case}  We continue by analyzing if and to what extent we can formulate self-normalized limit theorems  in free probability theory. 
	
	Considering the intuitive approach to self-normalized sums presented in \cref{SNS sec: Introduction classical case} and the fact that we have free analogs of the central limit theorem and the law of large numbers, one might think that a free self-normalized limit theorem with Wigner's semicircle law as the limiting distribution can be derived without much effort. Indeed, as we will see in the course of this work, the proof for such a limit theorem is still rather intuitive, but we encounter two difficulties: First, we have to define an appropriate non-commutative analog of a self-normalized sum, with particular attention paid to the normalizing part due to the needed inversion process. 
	Second, we miss a free analog of Slutsky's theorem that helps to combine the limits arising from the free central limit theorem and the law of large numbers.
	
	We solve these two problems as follows: Given a sequence of free self-adjoint non-commutative random variables $(X_i)_{i \in \mathbb{N}}$ and letting the sums $S_n$ and $V_n^2$ be defined as in \eqref{def SNS}, it turns out that $V_n^2$ is invertible under appropriate conditions. Hence, as already indicated at the beginning of this work, the self-adjoint random variable $\smash{U_n = V_n^{-\nicefrac{1}{2}}S_n V_n^{-\nicefrac{1}{2}}}$ from \eqref{def SNS} can be seen as a non-commutative self-normalized sum. The use of Slutsky's theorem as explained in \cref{SNS sec: Introduction classical case} will be substituted by the machinery of Cauchy transforms. For completeness, let us mention that in some cases, Slutsky's theorem can also be replaced by a convergence result for non-commutative rational expressions in strongly convergent random variables; compare to \cref{SNS bounded Remark Yin}. 
	
	A concrete realization of these ideas depends heavily on whether the random variables $(X_i)_{i \in\mathbb{N}}$ are bounded or unbounded. Thus, in the following, we differentiate between \textit{bounded} and \textit{unbounded self-normalized sums}. Recall that random variables are called bounded, whenever they are elements in a $C^*$-algebra belonging to a $C^*$-probability space, whereas unbounded random variables are given by unbounded linear operators on some Hilbert space that are affiliated with a von Neumann algebra. We refer to \cref{SNS sec Preliminaries} for the definitions. \\
	
	Let us begin with the study of bounded self-normalized sums. Our first result shows that self-normalized sums of free self-adjoint (possibly non-i.d.\@) bounded random variables converge to Wigner's semicircle law $\omega$ under appropriate conditions on some Lyapunov-type fraction with a rate that is only slightly worse than the one obtained in the free central limit theorem;  compare to \cref{Berry Esseen CLT bounded}. We measure the rate of convergence in terms of the Kolmogorov distance denoted by $\Delta$ and defined in \eqref{def Kolmogorov distance} and consider two modes of convergence, namely weak convergence and convergence of the moments of the corresponding distributions. 
	
	\begin{theorem} \label{bounded_main_non_id} 
		Let $(\mathcal{A}, \varphi)$ be a $C^*$-probability space with faithful tracial functional $\varphi$ and norm $\Vert \cdot \Vert_{\mathcal{A}}$. Let $(X_i)_{i \in \mathbb{N}}$ be a sequence of free self-adjoint random variables in $\mathcal{A}$ with $\varphi(X_i) = 0$ and $\varphi(X_i^2) = \sigma_i^2$ for all $i \in \mathbb{N}$. For any $n \in \mathbb{N}$, define 
		\begin{align*} 
			S_n := \sum_{i=1}^n X_i, \qquad V_n^2:= \sum_{i=1}^n X_i^2, \qquad B_n^2 := \sum_{i=1}^{n} \sigma_i^2
		\end{align*}
		and assume $B_n^2 >0$. Moreover, set 
		\begin{align*}
			L_{S, 3n} := \frac{\sum_{i=1}^{n} \Vert X_i \Vert_{\mathcal{A}}^3}{B_n^3}, \qquad L_{S, 4n} :=  \frac{\sum_{i=1}^{n} \Vert X_i \Vert_{\mathcal{A}}^4}{B_n^4}.
		\end{align*}
		Then, under the condition $L_{S, 4n} < \nicefrac{1}{16}$, the self-normalized sum $\smash{U_n := V_n^{-\nicefrac{1}{2}}S_n V_n^{-\nicefrac{1}{2}}}$ is well-defined in $\mathcal{A}$. If we additionally assume that $L_{S, 3n}<\nicefrac{1}{2e}$ holds and let $\mu_n$ denote the analytic distribution of $U_n$, we have
		\begin{align*}
			\Delta(\mu_n, \omega) \leq C \max\left\{ \vert \! \log L_{S, 3n} \vert L_{S, 3n}, \vert \! \log L_{S, 4n} \vert L_{S, 4n}^{\nicefrac{1}{2}}\right\}
		\end{align*}
		for some absolute constant $C>0.$ Lastly,  if $\lim_{n \rightarrow \infty }L_{S, 4n} = 0$ holds, we obtain $\mu_n \Rightarrow \omega$ as $n \rightarrow \infty$ and
		\begin{align*}
			\lim_{n \rightarrow \infty} \int_{\mathbb{R}} x^k \mu_n(dx) = \int_{\mathbb{R}} x^k \omega(dx)
		\end{align*} 
		for all $k \in \mathbb{N}$. 
	\end{theorem}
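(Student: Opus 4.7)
My plan is to compare $\mu_n$ with the analytic distribution $\nu_n$ of the ordinarily normalized sum $T_n := B_n^{-1}S_n$, for which \cref{Berry Esseen CLT bounded} already gives $\Delta(\nu_n,\omega) \leq C L_{S,3n}|\log L_{S,3n}|$, and to control the self-normalization correction $U_n - T_n$ via the Cauchy transform.

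\textbf{Well-definedness and the key operator estimate.} Setting $W_n := B_n^{-2}V_n^2 - I = B_n^{-2}\sum_{i=1}^n (X_i^2-\sigma_i^2)$, the summands are free, self-adjoint and centered, with operator norms bounded by $2\|X_i\|_{\mathcal{A}}^2$. A Voiculescu/Haagerup-type inequality for sums of free centered bounded operators (or a direct moment computation in which only non-crossing pair partitions survive) produces $\|W_n\|_{\mathcal{A}} \leq C L_{S,4n}^{1/2}$, which is strictly less than $1$ under $L_{S,4n} < \nicefrac{1}{16}$. Hence $V_n^2 = B_n^2(I+W_n)$ is invertible, $V_n^{-1/2} = B_n^{-1/2}(I+W_n)^{-1/4}$ by functional calculus, and a contractive resolvent expansion yields $B_n^{1/2}\|V_n^{-1/2}-B_n^{-1/2}\|_{\mathcal{A}} \leq C L_{S,4n}^{1/2}$. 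A parallel free-moment estimate gives $\|S_n\|_{\mathcal{A}} \leq C B_n$ in the same regime.

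\textbf{Kolmogorov estimate via Cauchy transforms.} Decomposing $U_n - T_n = (V_n^{-1/2}-B_n^{-1/2})S_n V_n^{-1/2} + B_n^{-1/2}S_n(V_n^{-1/2}-B_n^{-1/2})$ and chaining the norm bounds above produces $\|U_n - T_n\|_{\mathcal{A}} \leq C L_{S,4n}^{1/2}$, and in particular $\|U_n\|_{\mathcal{A}} \leq C$. The resolvent identity together with the tracial H\"older inequality then yields
\[
|G_{\mu_n}(z) - G_{\nu_n}(z)| \leq y^{-2}\,\varphi(|U_n - T_n|) \leq C\,y^{-2} L_{S,4n}^{1/2}, \qquad z = x+iy,\ y>0.
\]
A quantitative Stieltjes inversion of Bai type, with imaginary cut-off tuned to order $L_{S,4n}^{1/2}$, converts this into $\Delta(\mu_n,\nu_n) \leq C L_{S,4n}^{1/2}|\log L_{S,4n}|$, and the triangle inequality with the free Berry--Esseen bound produces the stated maximum.

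\textbf{Weak and moment convergence, and main obstacle.} If merely $L_{S,4n}\to 0$, the same identities give $\|U_n-T_n\|_{\mathcal{A}}\to 0$, while $L_{S,4n}\to 0$ forces the infinitesimality condition $\max_i\|X_i\|_{\mathcal{A}}/B_n\to 0$, from which a free CLT gives $\nu_n\Rightarrow\omega$ and hence $\mu_n\Rightarrow\omega$. Since $\|U_n\|_{\mathcal{A}}$ is uniformly bounded, the $\mu_n$ are supported in a common compact interval and moment convergence follows automatically from weak convergence. The main technical obstacle I foresee is the sharp operator-norm bound $\|W_n\|_{\mathcal{A}} \leq C L_{S,4n}^{1/2}$: crude commutative-style inequalities such as $\|\sum Y_i\|_{\mathcal{A}}^2\leq n\sum\|Y_i^2\|_{\mathcal{A}}$ are far too lossy, and freeness has to be used quantitatively. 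A secondary subtle point is the calibration of the Stieltjes inversion that produces the correct $|\log L_{S,4n}|$ factor without deteriorating the $L_{S,4n}^{1/2}$ rate.
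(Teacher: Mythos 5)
Your first step coincides with the paper's: Voiculescu's norm inequality for free centered sums gives $\Vert B_n^{-2}V_n^2-1\Vert_{\mathcal{A}}\leq 4L_{S,4n}^{\nicefrac{1}{2}}<1$, hence invertibility of $V_n^2$, and the bounds $\Vert S_n\Vert_{\mathcal{A}}\lesssim B_n$ and $\Vert U_n-B_n^{-1}S_n\Vert_{\mathcal{A}}\lesssim L_{S,4n}^{\nicefrac{1}{2}}$ are exactly what the paper establishes (the last one is essentially its proof of \cref{bounded_main_super}); your treatment of well-definedness, weak convergence, and moment convergence is sound. The genuine gap is in the Kolmogorov step. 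The resolvent estimate you propose,
\begin{align*}
\vert G_{\mu_n}(z)-G_{\nu_n}(z)\vert \leq y^{-2}\varphi(\vert U_n-T_n\vert)\leq C y^{-2}L_{S,4n}^{\nicefrac{1}{2}},\qquad z=x+iy,
\end{align*}
is too lossy to produce the claimed rate: in any Bai-type inequality (e.g.\@ \cref{Bai Goetze more general endl. Intgrenzen}) the smoothing term costs an additive $O(v)$, while your bound integrated over the vertical segment gives $\int_v^1 Cy^{-2}L_{S,4n}^{\nicefrac{1}{2}}dy\approx CL_{S,4n}^{\nicefrac{1}{2}}/v$. With your proposed cut-off $v\asymp L_{S,4n}^{\nicefrac{1}{2}}$ this integral is of order one (no decay at all), and the optimal choice $v\asymp L_{S,4n}^{\nicefrac{1}{4}}$ only yields $\Delta(\mu_n,\omega)\lesssim L_{S,4n}^{\nicefrac{1}{4}}$. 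This is precisely the weaker rate the paper is forced to accept in the unbounded case, and it explicitly identifies the $(\Im z)^{-2}$ factor as the obstruction there; see \cref{SNS unbounded Endbemerkung}. So the asserted conversion to $\Delta(\mu_n,\nu_n)\leq CL_{S,4n}^{\nicefrac{1}{2}}\vert\!\log L_{S,4n}\vert$ does not follow from what you have.

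What is missing is a bound on the Cauchy-transform difference that loses only (roughly) one power of $\Im z$. The paper obtains it by exploiting traciality to write $G_n(z)=\varphi\big(V_n(zV_n-S_n)^{-1}\big)$, factoring $(zV_n-S_n)^{-1}=U_z^{-1}(z-S_n)^{-1}$ with $U_z:=1+z(z-S_n)^{-1}(V_n-1)$ satisfying $\Vert U_z^{-1}\Vert_{\mathcal{A}}\leq 2$ on the relevant region, and then combining Cauchy--Schwarz for $\varphi$ with the second-moment resolvent identity $\varphi\big(\vert(z-S_n)^{-1}\vert^2\big)=\vert\Im G_{S_n}(z)\vert/\Im z\leq(1+\pi C_0)/\Im z$ for $\Im z\geq L_{S,3n}$, the last inequality coming from the free Berry--Esseen bound. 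This gives $\vert G_n(z)-G_{S_n}(z)\vert\lesssim L_{S,4n}^{\nicefrac{1}{2}}\big((\Im z)^{-\nicefrac{1}{2}}+\vert z\vert(\Im z)^{-1}\big)$, whose $dy$-integral is only logarithmic in $v$, and the choice $v\asymp\max\{L_{S,3n},L_{S,4n}^{\nicefrac{1}{2}}\}$ then yields the stated rate. Two secondary points: applying Bai's inequality directly to the pair $(\mu_n,\nu_n)$ requires an extra argument for the modulus of continuity of $F_{\nu_n}$ (it is cleaner to take $\nu=\omega$ and split the Cauchy-transform difference, as the paper does), and \cref{Berry Esseen CLT bounded} gives $\Delta(\nu_n,\omega)\leq C_0L_{S,3n}$ without the logarithmic factor you quoted, which is harmless but worth correcting.
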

	We will comment on the subscript $S$ (standing for support) in the Lyapunov-type fractions $L_{S, 3n}$ and $L_{S, 4n}$ in \cref{SNS section preliminaries limit theorems}. Moreover, for a discussion of the conditions imposed in the theorem above as well as of possible variations of it, we refer to \cref{bounded comment on proof of main thm non id}. At this point, let us just mention that the assumption $\lim_{n \rightarrow \infty }L_{S, 4n} = 0$ is stronger than the free analog of Lindeberg's condition given in \eqref{Lindeberg condition}. 
	
	As a byproduct of our proof of \cref{bounded_main_non_id}, we obtain that bounded self-normalized sums partly exhibit \textit{superconvergence} to Wigner's semicircle law. More precisely, as stated in the following corollary, the support of the distribution of the self-normalized sum from above is close to that of Wigner's semicircle law. 
	\begin{kor} \label{bounded_main_super}
		Using the notation introduced in \cref{bounded_main_non_id}, assume that $B_n^2 >0$ and $L_{S, 4n}< \nicefrac{1}{64}$ hold. Then, we have
		\begin{align*}
			\supp  \mu_n \subset \left( -2 - \frac{\max_{i \in \{1, \dots, n\}} \Vert X_i \Vert_{\mathcal{A}}}{B_n} - 57 L_{S, 4n}^{\nicefrac{1}{2}},\,2 +  \frac{\max_{ i \in \{1, \dots, n\}} \Vert X_i \Vert_{\mathcal{A}}}{B_n} + 57 L_{S, 4n}^{\nicefrac{1}{2}}\right).
		\end{align*}
	\end{kor}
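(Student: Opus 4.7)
The plan is to reduce the support inclusion to an operator-norm bound on $U_n$ and then to derive that bound by treating the numerator $S_n$ and the normalizer $V_n^{-1}$ separately. Since $\varphi$ is faithful and tracial and $U_n$ is self-adjoint in $\mathcal{A}$, the analytic distribution $\mu_n$ coincides with the spectral measure of $U_n$, so $\supp \mu_n = \Sp(U_n) \subset [-\Vert U_n\Vert_{\mathcal{A}},\Vert U_n\Vert_{\mathcal{A}}]$. It therefore suffices to establish the strict inequality
\begin{align*}
\Vert U_n\Vert_{\mathcal{A}} < 2 + \frac{\max_{i \in \{1,\dots,n\}}\Vert X_i\Vert_{\mathcal{A}}}{B_n} + 57\, L_{S,4n}^{\nicefrac{1}{2}}.
\end{align*}

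By submultiplicativity, $\Vert U_n\Vert_{\mathcal{A}} \leq \Vert V_n^{-\nicefrac{1}{2}}\Vert_{\mathcal{A}}^2\,\Vert S_n\Vert_{\mathcal{A}} = \Vert V_n^{-1}\Vert_{\mathcal{A}}\,\Vert S_n\Vert_{\mathcal{A}}$, so I would bound the two factors separately. For $\Vert S_n\Vert_{\mathcal{A}}$ I would apply a free CLT with superconvergence to the free sum $X_1 + \dots + X_n$ of centered bounded self-adjoint random variables of total variance $B_n^2$, producing an edge-type bound of the shape
\begin{align*}
\Vert S_n\Vert_{\mathcal{A}} \leq 2 B_n + \max_{i \in \{1,\dots,n\}}\Vert X_i\Vert_{\mathcal{A}} + c_1 B_n L_{S,4n}^{\nicefrac{1}{2}}
\end{align*}
for some absolute constant $c_1$; such an estimate should appear as an intermediate step in the proof of \cref{bounded_main_non_id}. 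For $\Vert V_n^{-1}\Vert_{\mathcal{A}}$, I would write $V_n^2 - B_n^2 = \sum_{i=1}^n (X_i^2 - \sigma_i^2)$ as a free sum of centered bounded self-adjoint elements with second moments controlled by $\varphi((X_i^2-\sigma_i^2)^2) \leq \Vert X_i\Vert_{\mathcal{A}}^4$, and apply the same type of superconvergent estimate to obtain $\Vert V_n^2 - B_n^2\Vert_{\mathcal{A}} \leq c_2 B_n^2 L_{S,4n}^{\nicefrac{1}{2}}$ for some absolute $c_2$. Together with the hypothesis $L_{S,4n} < \nicefrac{1}{64}$ this forces $V_n^2 \geq B_n^2(1 - c_2 L_{S,4n}^{\nicefrac{1}{2}}) > 0$, so $V_n$ is invertible and $\Vert V_n^{-1}\Vert_{\mathcal{A}} \leq B_n^{-1}(1 - c_2 L_{S,4n}^{\nicefrac{1}{2}})^{-\nicefrac{1}{2}}$.

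Multiplying the two bounds and expanding $(1 - c_2 L_{S,4n}^{\nicefrac{1}{2}})^{-\nicefrac{1}{2}} = 1 + O(L_{S,4n}^{\nicefrac{1}{2}})$ (which is legitimate thanks to the threshold $\nicefrac{1}{64}$) yields a bound of the form $2 + \max_i\Vert X_i\Vert_{\mathcal{A}}/B_n + (\textrm{const})\, L_{S,4n}^{\nicefrac{1}{2}}$, and careful bookkeeping of the absolute constants $c_1$, $c_2$ together with all cross terms should deliver the explicit value $57$. The main technical obstacle is producing the edge bound on $\Vert S_n\Vert_{\mathcal{A}}$ with coefficient exactly $1$ in front of $\max_i\Vert X_i\Vert_{\mathcal{A}}/B_n$: since $\max_i\Vert X_i\Vert_{\mathcal{A}}/B_n$ can be as large as $L_{S,4n}^{\nicefrac{1}{4}}$, any slack in that coefficient cannot be absorbed into the $L_{S,4n}^{\nicefrac{1}{2}}$ term, so extracting this individual-max correction separately is essential; this is presumably why it appears on its own in the statement alongside the Lyapunov-type term.
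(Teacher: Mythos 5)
Your proposal is correct in substance and rests on the same two estimates as the paper, but it combines them differently. The paper's proof also reduces the claim to a norm bound on $U_n$ (faithfulness gives $\supp\mu_n = \Sp(U_n)$), takes the bound $\Vert S_n\Vert_{\mathcal{A}} \leq 2 + \max_{i\in[n]}\Vert X_i\Vert_{\mathcal{A}}/B_n$ from \eqref{bounded_support ungleichung_Sn}, and takes $\Vert V_n^2-1\Vert_{\mathcal{A}} \leq 4L_{S,4n}^{\nicefrac{1}{2}}$ from \cref{bounded_Vn is invertible}; but instead of multiplying $\Vert V_n^{-1}\Vert_{\mathcal{A}}\Vert S_n\Vert_{\mathcal{A}}$ as you do, it expands $V_n^{-\nicefrac{1}{2}} = 1 + (V_n^{-\nicefrac{1}{2}}-1)$ with $\Vert V_n^{-\nicefrac{1}{2}}-1\Vert_{\mathcal{A}} \leq 8L_{S,4n}^{\nicefrac{1}{2}}$ (via \cref{Lemma Lin}), precisely so that $S_n$ enters with coefficient one and all correction terms are of order $L_{S,4n}^{\nicefrac{1}{2}}$. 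Your multiplicative route works just as well: with $\Vert V_n^{-1}\Vert_{\mathcal{A}} \leq (1-4L_{S,4n}^{\nicefrac{1}{2}})^{-1}$ (or your $(1-c_2L_{S,4n}^{\nicefrac{1}{2}})^{-\nicefrac{1}{2}}$ via operator monotonicity) the coefficient in front of $\max_i\Vert X_i\Vert_{\mathcal{A}}/B_n$ becomes $1+O(L_{S,4n}^{\nicefrac{1}{2}})$, the excess is $O(L_{S,4n}^{\nicefrac{3}{4}})$ and hence absorbable, and the resulting constant is well below $57$; note that the crude multiplicative bound \eqref{bounded_support ungleichung} used in the proof of \cref{bounded_main_non_id}, which carries a fixed factor $\nicefrac{40}{39}$, would not suffice, exactly for the reason you flag. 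The one imprecision is your sourcing of the edge bound: what delivers coefficient exactly one on $\max_i\Vert X_i\Vert_{\mathcal{A}}$ is not a superconvergence theorem for the free CLT but Voiculescu's deterministic norm inequality $\Vert a_1+\dots+a_k\Vert_{\mathcal{A}} \leq \max_i\Vert a_i\Vert_{\mathcal{A}} + 2\big(\sum_i\varphi(a_i^2)\big)^{\nicefrac{1}{2}}$ (\cref{bounded lemma voiculescu support}), applied once to the $X_i$ (giving your bound with $c_1=0$) and once to the centered squares $X_i^2-\sigma_i^2$ (giving $c_2=4$); your guess that this intermediate estimate is available from the proof of \cref{bounded_main_non_id} is indeed correct, so no step of your plan fails.
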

	Note that the combination of the last two results with \cite[Proposition 2.1]{Collins2014} implies that, under the condition $\lim_{n \rightarrow \infty } L_{S, 4n} = 0$, the self-normalized sum $U_n$ from \cref{bounded_main_non_id} strongly converges to a standard semicircular element as $n \rightarrow \infty$; see \cref{SNS sec preliminaries bounded} for the definitions of strong convergence and semicircular elements.
	
	In the special case of identical distributions, all conditions on the Lyapunov-type fractions made above are satisfied for large $n$. Thus, in that case, \cref{bounded_main_non_id,bounded_main_super} take the following form:
	
	\begin{kor} \label{bounded_main_id}
		Let $(\mathcal{A}, \varphi)$ be a $C^*$-probability space with faithful tracial functional $\varphi$ and norm $\Vert \cdot \Vert_{\mathcal{A}}$. Let $(X_i)_{i \in \mathbb{N}}$  be a sequence of free identically distributed self-adjoint random variables in $\mathcal{A}$ with $\varphi(X_1) = 0$ and $\varphi(X_1^2) = 1$.  For any $n \in \mathbb{N}$, define $S_n$ and $V_n^2$ as in \cref{bounded_main_non_id}.
		Then, for $n > 16\Vert X_1 \Vert _{\mathcal{A}}^4$, the self-normalized sum 
		$\smash{U_n := V_n^{-\nicefrac{1}{2}}S_nV_n^{-\nicefrac{1}{2}}}$
		is well-defined in $\mathcal{A}$. If we let  $\mu_n$ denote the analytic distribution of $U_n$ for those $n$, we have
		\begin{align*}
			\Delta\left(\mu_{n}, \omega\right) \leq C\Vert X_1 \Vert _{\mathcal{A}}^3\frac{\log n}{\sqrt{n}}
		\end{align*}
		for some absolute constant $C>0$. In particular, it follows $\mu_n \Rightarrow \omega$ as $n \rightarrow \infty$ and 
		\begin{align*}
		\lim_{n \rightarrow \infty} \int_{\mathbb{R}} x^k \mu_n(dx) = \int_{\mathbb{R}} x^k \omega(dx) 
		\end{align*} 
		for all $k \in \mathbb{N}$. Lastly, for $n > 64\Vert X_1 \Vert _{\mathcal{A}}^4$, we obtain
		\begin{align*}
			\supp  \mu_n \subset \left( -2 - \frac{58\Vert X_1 \Vert _{\mathcal{A}}^2}{\sqrt{n}},\, 2 +  \frac{58\Vert X_1 \Vert _{\mathcal{A}}^2}{\sqrt{n}} \right).
		\end{align*} 
	\end{kor}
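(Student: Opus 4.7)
The plan is simply to specialize \cref{bounded_main_non_id} and \cref{bounded_main_super} to the i.i.d.\ setting and simplify. Since $\varphi(X_i^2)=1$ gives $\sigma_i^2=1$ and hence $B_n^2=n$, and since $\|X_i\|_{\mathcal{A}}=\|X_1\|_{\mathcal{A}}$ for all $i$, the two Lyapunov-type fractions collapse to
\begin{align*}
L_{S,3n} = \frac{\|X_1\|_{\mathcal{A}}^3}{\sqrt{n}}, \qquad L_{S,4n} = \frac{\|X_1\|_{\mathcal{A}}^4}{n}.
\end{align*}
In particular $L_{S,4n}<\nicefrac{1}{16}$ is equivalent to $n>16\|X_1\|_{\mathcal{A}}^4$, which gives the well-definedness of $U_n$ directly from \cref{bounded_main_non_id}. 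Throughout I will use that $\|X_1\|_{\mathcal{A}}^2 \geq \varphi(X_1^2)=1$ because $\varphi$ is a state, so $\|X_1\|_{\mathcal{A}}\geq 1$.

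For the Kolmogorov bound I would split into two regimes. If $n$ is additionally large enough that $L_{S,3n}<\nicefrac{1}{2e}$ (equivalently $n>4e^2\|X_1\|_{\mathcal{A}}^6$), then \cref{bounded_main_non_id} applies. Here $|\log L_{S,3n}|$ and $|\log L_{S,4n}|$ are both of order $\log n$ up to additive multiples of $\log\|X_1\|_{\mathcal{A}}$ which are absorbed into the constant, and the inequality $\|X_1\|_{\mathcal{A}}\geq 1$ shows that the first entry of the maximum dominates, yielding
\begin{align*}
\max\bigl\{|\!\log L_{S,3n}|L_{S,3n},\,|\!\log L_{S,4n}|L_{S,4n}^{\nicefrac{1}{2}}\bigr\} \leq C' \|X_1\|_{\mathcal{A}}^3 \frac{\log n}{\sqrt{n}}.
\end{align*}
For the intermediate range $16\|X_1\|_{\mathcal{A}}^4<n\leq 4e^2\|X_1\|_{\mathcal{A}}^6$ the Kolmogorov distance is at most $1$ trivially, while $\|X_1\|_{\mathcal{A}}^3\log n/\sqrt{n}\geq \log(16)/(2e)$ is bounded below by an absolute positive constant in that range; enlarging $C$ once more covers this case.

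Weak convergence and convergence of all integer moments then follow from the corresponding statements in \cref{bounded_main_non_id} because $L_{S,4n}=\|X_1\|_{\mathcal{A}}^4/n\to 0$. Finally, for the support statement I would plug the i.i.d.\ values directly into \cref{bounded_main_super}: the hypothesis $n>64\|X_1\|_{\mathcal{A}}^4$ yields $L_{S,4n}<\nicefrac{1}{64}$, and
\begin{align*}
\frac{\max_i\|X_i\|_{\mathcal{A}}}{B_n}+57\,L_{S,4n}^{\nicefrac{1}{2}} = \frac{\|X_1\|_{\mathcal{A}}}{\sqrt{n}}+\frac{57\|X_1\|_{\mathcal{A}}^2}{\sqrt{n}} \leq \frac{58\|X_1\|_{\mathcal{A}}^2}{\sqrt{n}},
\end{align*}
again using $\|X_1\|_{\mathcal{A}}\leq\|X_1\|_{\mathcal{A}}^2$. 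The only genuine obstacle here is the bookkeeping of absolute constants when merging the intermediate and asymptotic regimes in the Berry--Esseen bound; there is no new analytic input beyond the two previously proven statements.
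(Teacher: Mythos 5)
Your proposal is correct and takes essentially the same route as the paper: compute $L_{S,3n}=\Vert X_1\Vert_{\mathcal{A}}^3 n^{-\nicefrac{1}{2}}$ and $L_{S,4n}=\Vert X_1\Vert_{\mathcal{A}}^4 n^{-1}$, apply \cref{bounded_main_non_id} for $n>4e^2\Vert X_1\Vert_{\mathcal{A}}^6$, cover the intermediate range $16\Vert X_1\Vert_{\mathcal{A}}^4<n\leq 4e^2\Vert X_1\Vert_{\mathcal{A}}^6$ by the trivial bound $\Delta\leq 1$, and read off the support statement from \cref{bounded_main_super} using $\Vert X_1\Vert_{\mathcal{A}}\geq 1$. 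Your remark that the first entry of the maximum ``dominates'' is only true up to a constant factor, but this is immaterial since both entries are bounded by $\Vert X_1\Vert_{\mathcal{A}}^3\log n/\sqrt{n}$ in the relevant range.
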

	
	Hence, in the case of identical distributions, the limiting behavior of self-normalized sums is comparable to that of sums considered in the free central limit theorem: First, up to some logarithmic factor, the distributions of both sums exhibit the same rate of convergence to Wigner's semicircle law measured in terms of the Kolmogorov distance; compare to \cref{Berry Esseen CLT bounded}. Second, both sums admit convergence of their support to $[-2,2]$ with identical speed up to constants; see \cite{Kargin2007}. \\

	Let us continue with analyzing unbounded self-normalized sums. We obtain an unbounded analog of \cref{bounded_main_non_id} under the assumption of finite fourth moments and Lindeberg's condition. Below, for a \linebreak $\smash{W^*}$-probability space $(\mathcal{A}, \varphi)$, we denote the corresponding non-commutative  $\mathcal{L}^4$-space by $\mathcal{L}^4(\mathcal{A}, \varphi)$. Recall that $\mathcal{L}^4(\mathcal{A}, \varphi)$ is contained in the algebra $\Aff(\mathcal{A})$ of all operators that are affiliated with $\mathcal{A}$; compare to \cref{SNS sec: preliminaries unbounded} for more details. 
	\begin{theorem} \label{unbounded_main_BE}
		Let ($\mathcal{A}, \varphi$) be a $W^*$-probability space with tracial functional $\varphi$. Let $(X_i)_{i \in \mathbb{N}}$ be a sequence of free self-adjoint random variables in $\mathcal{L}^4(\mathcal{A}, \varphi)$ with $\varphi(X_i) = 0$ and $\varphi(X_i^2) = \sigma_i^2$ for all $i \in \mathbb{N}$.
		For any $n \in \mathbb{N}$, define $S_n, V_n^2$, and $B_n^2$ as in \cref{bounded_main_non_id}. Assume that 
		$B_n^2 >0$ as well as Lindeberg's condition, i.e.\@ 
		\begin{align} \label{Lindeberg condition}
			\forall \varepsilon>0: 	\lim_{n \rightarrow \infty} \frac{1}{B_n^2} \sum_{i=1}^{n} \int_{ \vert x \vert > \varepsilon B_n} x^2 \mu_{X_i}(dx) = 0
		\end{align}
		with $\mu_{X_i}$ being the analytic distribution of $X_i$ for any $i \in \mathbb{N}$, are satisfied. Moreover, 
		set 
			\begin{align*} L_{4n} := \frac{\sum_{i=1}^n \varphi(\vert X_i \vert^4)}{B_n^{4}}.
		\end{align*}
		Then, there exists $n_0 \in \mathbb{N}$ such that the self-normalized sum $\smash{U_n := V_n^{-\nicefrac{1}{2}}S_n V_n^{-\nicefrac{1}{2}}}$ is well-defined in $\Aff(\mathcal{A})$ for all $n \geq n_0$. If we let $\mu_n$ denote the analytic distribution of $U_n$ for those $n$, we have  
		\begin{align*}
			\Delta(\mu_n, \omega) \leq C\left(L_{4n}^{\nicefrac{1}{4}} + \sqrt{n}L_{4n}^{\nicefrac{3}{4}} + nL_{4n}^{\nicefrac{5}{4}}\right)
		\end{align*}
		for some absolute constant $C>0$.  Lastly, if  $\lim_{n \rightarrow \infty} \sqrt{n}L_{4n} = 0$ holds, we obtain $\mu_n \Rightarrow \omega$ as $n \rightarrow \infty.$
	\end{theorem}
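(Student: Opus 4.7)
The plan is to reduce the unbounded statement to the bounded case already treated in \cref{bounded_main_non_id} via a spectral truncation of each $X_i$. Fix a truncation threshold $\tau_n$ to be optimized later; using the Borel functional calculus on the operators affiliated with $\mathcal{A}$, define the bounded self-adjoint variables $\tilde X_i := X_i \mathbf{1}_{[-\tau_n, \tau_n]}(X_i)$, recenter them to $Y_i := \tilde X_i - \varphi(\tilde X_i)\cdot\mathbf{1}$, and note that $(Y_i)_{i \in \mathbb{N}}$ is still free with $\|Y_i\|_{\mathcal{A}} \leq 2\tau_n$ and $\varphi(Y_i)=0$. Denote the corresponding partial sums by $\tilde S_n, \tilde V_n^2$ and variance sums by $\tilde B_n^2$; Lindeberg's condition forces $\tilde B_n^2/B_n^2 \to 1$ and $\varphi(\tilde X_i) \to 0$ at a quantitative rate controlled by the tail integrals appearing in \eqref{Lindeberg condition}.

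Next, I would apply \cref{bounded_main_non_id} to $(Y_i)$. With $\|Y_i\|_{\mathcal{A}} \leq 2\tau_n$ and $\tilde B_n$ comparable to $B_n$, the Lyapunov-type fractions satisfy $L_{S,4n}^{\mathrm{tr}} \lesssim \tau_n^4 n/B_n^4$ and $L_{S,3n}^{\mathrm{tr}} \lesssim \tau_n^3 n/B_n^3$. Choosing $\tau_n$ so that both are sufficiently small for the hypotheses of the bounded theorem, one obtains a Kolmogorov estimate between the distribution $\tilde\mu_n$ of $\tilde V_n^{-1/2}\tilde S_n \tilde V_n^{-1/2}$ and $\omega$ whose order of magnitude is $(\tau_n^4 n/B_n^4)^{1/2}$ up to logarithmic factors.

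It then remains to compare $\mu_n$ with $\tilde\mu_n$. Two quantitative inputs are decisive: (i) by freeness and centering of $X_i^2 - \sigma_i^2\cdot\mathbf{1}$, one has $\varphi((V_n^2 - B_n^2\cdot\mathbf{1})^2) = \sum_i \varphi((X_i^2 - \sigma_i^2)^2) \leq \sum_i \varphi(|X_i|^4) = B_n^4 L_{4n}$, so $V_n^2$ lies within $\mathcal{L}^2$-distance $B_n^2 L_{4n}^{1/2}$ of $B_n^2\cdot\mathbf{1}$; (ii) the tails $\|S_n-\tilde S_n\|_{\mathcal{L}^2}^2$ and $\|V_n^2-\tilde V_n^2\|_{\mathcal{L}^1}$ are dominated by $\sum_i \varphi(|X_i|^4\mathbf{1}_{|X_i|>\tau_n}) \leq \tau_n^{-2} B_n^4 L_{4n}$. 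Input (i), promoted from an $\mathcal{L}^2$-bound to a spectral statement via a resolvent argument on $V_n^2$, yields that $V_n$ is invertible in $\Aff(\mathcal{A})$ for all sufficiently large $n$ and that $V_n^{-1/2}$ is close to $B_n^{-1/2}\cdot\mathbf{1}$. Combined with input (ii) and a Bai-type inequality expressing $\Delta(\mu_n,\tilde\mu_n)$ through Cauchy transforms, this produces a bound on the comparison error. Adding it to $\Delta(\tilde\mu_n,\omega)$ and optimizing $\tau_n$ against $L_{4n}$ and $n$ will produce the announced three-term rate $L_{4n}^{1/4}+\sqrt n L_{4n}^{3/4}+n L_{4n}^{5/4}$, while the weak convergence under $\sqrt n L_{4n} \to 0$ follows at once, since each of the three summands then tends to $0$.

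The principal obstacle is the interaction between truncation and self-normalization in the unbounded setting. On the one hand, the existence of $V_n^{-1/2}$ in $\Aff(\mathcal{A})$ is a spectral statement about $V_n^2$ and does not follow directly from $\mathcal{L}^2$-closeness to $B_n^2\cdot\mathbf{1}$; a separate argument bounding the spectral mass of $V_n^2$ near the origin is required. On the other hand, one must control the Cauchy transform of $V_n^{-1/2}S_n V_n^{-1/2}$ in terms of that of its bounded analog with sufficient precision for the Bai-type inequality to apply. Designing the truncation level $\tau_n$ so that the resulting three error sources (the bounded CLT rate, the numerator truncation, and the denominator comparison) combine into exactly the stated three-term bound, rather than into a strictly weaker rate, is the main technical content.
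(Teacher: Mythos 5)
Your outline takes a genuinely different route (spectral truncation plus the bounded theorem) from the paper, but as it stands it has two gaps that are not merely ``technical content to be filled in.'' First, the well-definedness of $U_n$: you propose to get invertibility of $V_n^2$ in $\Aff(\mathcal{A})$ by promoting the $\mathcal{L}^2$-bound $\Vert V_n^2-1\Vert_2 \leq L_{4n}^{1/2}$ to ``a separate argument bounding the spectral mass of $V_n^2$ near the origin.'' This cannot work: a self-adjoint operator affiliated with $\mathcal{A}$ is invertible in $\Aff(\mathcal{A})$ precisely when $E_{V_n^2}(\{0\})=0$ \emph{exactly}, and no quantitative smallness of the spectral mass near $0$ excludes a tiny atom at $0$ ($\mathcal{L}^2$-closeness only gives $\mu_{V_n^2}(\{0\})\leq \Vert V_n^2-1\Vert_2^2$, not $=0$); the paper explicitly notes that norm/Neumann-series reasoning breaks down here. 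The paper's proof of \cref{unbounded_V_n is invertible} uses a qualitatively different input: the Bercovici--Voiculescu characterization of atoms of free additive convolutions (\cref{atom}) combined with the superconvergence result of Bercovici--Ho--Wang--Zhong, which makes $\mu_{S_n}$ absolutely continuous near $0$ for large $n$; an atom of $\mu_{V_n^2}$ at $0$ would force atoms of every $\mu_{X_i/B_n}$ at $0$ and hence an atom of $\mu_{S_n}$ at $0$, a contradiction. Nothing in your proposal substitutes for this regularity argument, and it is needed regardless of any truncation, since the theorem's first claim concerns the original $V_n^2$.

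Second, the rate. With only finite fourth moments the truncation detour is quantitatively too lossy: the recentred truncated variables give a bounded-CLT rate of order $\tau_n^3 n/B_n^3$ (up to logarithms), while the comparison of $\mu_n$ with $\tilde\mu_n$ has to pass through resolvents of two non-commuting self-normalized sums, costing at least a factor $v^{-1}$ in the Bai step on top of a perturbation of size roughly $\tau_n^{-1}$ (since $\varphi(X_i^2\mathbf{1}_{\vert X_i\vert>\tau_n})\leq \tau_n^{-2}\varphi(X_i^4)$ is all you control); optimizing $\tau_n$ in the i.i.d.\@ case then yields something like $n^{-1/14}$, far short of the claimed $n^{-1/4}$ --- and you yourself leave exactly this optimization open. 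The paper avoids truncation altogether: it bounds $\vert G_n(z)-G_{S_n}(z)\vert \leq (\Im z)^{-2}\Vert U_n-S_n\Vert_1$ and shows $\Vert U_n-S_n\Vert_1 \leq \Vert 1-V_n^{1/2}\Vert_2\big(\Vert U_n\Vert_2 + \Vert S_nV_n^{-1/2}\Vert_2\big) \lesssim L_{4n}^{1/2}\big(1+\sqrt{n}L_{4n}^{1/2}\big)$, where the decisive estimates $\Vert U_n\Vert_2 \lesssim 1+\sqrt{n}\Vert V_n^2-1\Vert_2$ and $\Vert S_nV_n^{-1/2}\Vert_2 \lesssim 1+\sqrt{n}\Vert V_n^2-1\Vert_2$ come from the spectral-projection argument of \cref{second moment SNS} (project onto $\{V_n^2<1/4\}$, whose trace is at most $2\Vert V_n^2-1\Vert_2^2$); moreover, for the $G_{S_n}-G_\omega$ contributions it cannot afford the Berry--Esseen bound $L_{3n}^{1/2}$ and instead imports the finer intermediate integral estimates from Chistyakov--G\"otze's proof. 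Without these ingredients, or genuine substitutes for them, your plan does not reach the stated bound $C\big(L_{4n}^{1/4}+\sqrt{n}L_{4n}^{3/4}+nL_{4n}^{5/4}\big)$.
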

	
	In the setting of the above theorem and under the additional condition that the Lyapunov fraction $L_{4n}$ is of order $n^{-1}$, we deduce that $\Delta(\mu_n, \omega)$ is of order $\smash{n^{-\nicefrac{1}{4}}}$ for sufficiently large $n$. This includes the case of identical distributions; the corresponding result is formulated in the following corollary.
	\begin{kor} \label{unbounded main id}
			Let ($\mathcal{A}, \varphi$) be a $W^*$-probability space with tracial functional $\varphi$. Let $(X_i)_{i \in \mathbb{N}}$  be a sequence of free  identically distributed self-adjoint random variables in $\mathcal{L}^4(\mathcal{A}, \varphi)$ with $\varphi(X_1) = 0$ and $\varphi(X_1^2) = 1$. For any $n \in \mathbb{N}$, define $S_n$ and $V_n^2$ as in \cref{bounded_main_non_id}. 
		Then, there exists $n_0 \in \mathbb{N}$ such that the self-normalized sum $\smash{U_n := V_n^{-\nicefrac{1}{2}}S_n V_n^{-\nicefrac{1}{2}}}$ is well-defined in $\Aff(\mathcal{A})$ for all $n \geq n_0$. If we let  $\mu_n$ denote the analytic distribution of $U_n$ for those $n$, 
		we have  
		\begin{align*}
			\Delta(\mu_n, \omega) \leq C \frac{\varphi( \vert X_1 \vert^4)^{\nicefrac{5}{4}}}{n^{\nicefrac{1}{4}}}
		\end{align*}
		for some absolute constant $C>0.$ In particular, it follows $\mu_n \Rightarrow \omega$ as $n \rightarrow \infty.$
	\end{kor}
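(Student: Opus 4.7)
The plan is to derive this corollary as a direct specialization of \cref{unbounded_main_BE} to the i.i.d.\@ case, so that essentially no new analytic work is required; the task reduces to checking the hypotheses and simplifying the resulting bound.

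First I would set $\sigma_i^2 = \varphi(X_1^2) = 1$ for every $i \in \mathbb{N}$, so that $B_n^2 = n$ and
\begin{align*}
L_{4n} = \frac{\sum_{i=1}^n \varphi(\vert X_i \vert^4)}{B_n^4} = \frac{n \, \varphi(\vert X_1 \vert^4)}{n^2} = \frac{\varphi(\vert X_1 \vert^4)}{n}.
\end{align*}
Next I would verify Lindeberg's condition \eqref{Lindeberg condition}. Since the random variables are identically distributed, the sum collapses to a single integral,
\begin{align*}
\frac{1}{B_n^2}\sum_{i=1}^n \int_{\vert x \vert > \varepsilon B_n} x^2 \, \mu_{X_i}(dx) = \int_{\vert x \vert > \varepsilon \sqrt{n}} x^2\, \mu_{X_1}(dx),
\end{align*}
which tends to $0$ as $n \to \infty$ by dominated convergence because $\varphi(X_1^2) = \int x^2 \mu_{X_1}(dx) = 1$ is finite. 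This settles all hypotheses of \cref{unbounded_main_BE}.

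Then I would invoke \cref{unbounded_main_BE} to obtain an $n_0 \in \mathbb{N}$ such that $U_n$ is well-defined in $\Aff(\mathcal{A})$ for all $n \geq n_0$, together with the estimate
\begin{align*}
\Delta(\mu_n, \omega) \leq C\left(L_{4n}^{\nicefrac{1}{4}} + \sqrt{n} L_{4n}^{\nicefrac{3}{4}} + n L_{4n}^{\nicefrac{5}{4}}\right).
\end{align*}
Substituting $L_{4n} = \varphi(\vert X_1 \vert^4)/n$ converts each of the three terms to order $n^{-\nicefrac{1}{4}}$, with respective prefactors $\varphi(\vert X_1 \vert^4)^{\nicefrac{1}{4}}$, $\varphi(\vert X_1 \vert^4)^{\nicefrac{3}{4}}$, and $\varphi(\vert X_1 \vert^4)^{\nicefrac{5}{4}}$. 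Because $\varphi(\vert X_1 \vert^4) \geq \varphi(X_1^2)^2 = 1$ by the Cauchy–Schwarz inequality (or Jensen), the third prefactor dominates and I may absorb the other two to obtain
\begin{align*}
\Delta(\mu_n, \omega) \leq C \frac{\varphi(\vert X_1 \vert^4)^{\nicefrac{5}{4}}}{n^{\nicefrac{1}{4}}}
\end{align*}
after adjusting $C$ by a universal factor. The weak convergence statement $\mu_n \Rightarrow \omega$ follows either from this quantitative bound or alternatively, and more transparently, from noting that $\sqrt{n} L_{4n} = \varphi(\vert X_1 \vert^4)/\sqrt{n} \to 0$, which is the sufficient condition in \cref{unbounded_main_BE}. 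There is no genuine obstacle here; the only point requiring a moment's care is the bookkeeping that collapses the three contributions in the Berry–Esseen bound into a single term, which is clean thanks to the normalization $\varphi(X_1^2) = 1$.
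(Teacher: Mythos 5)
Your proposal is correct and follows essentially the same route as the paper: specialize \cref{unbounded_main_BE} to the identically distributed case with $B_n^2=n$ and $L_{4n}=\varphi(\vert X_1\vert^4)/n$, note Lindeberg's condition holds, and absorb the three terms into the dominant one via $\varphi(\vert X_1\vert^4)\geq \varphi(X_1^2)^2=1$. The paper's own proof does exactly this (writing $\Vert X_1\Vert_4^5=\varphi(\vert X_1\vert^4)^{\nicefrac{5}{4}}$), so no further commentary is needed.
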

	
By our approach, we were not able to establish rates of convergence that go beyond the ones obtained in \cref{unbounded_main_BE} and \cref{unbounded main id}. A comment on the difficulties encountered is provided in \cref{SNS unbounded Endbemerkung}. 
	
	For completeness, let us mention that all of the above results apply to self-normalized sums of free self-adjoint random variables contained in a von Neumann algebra. Note that in the  special case of identical distributions, the rate of convergence stated in \cref{bounded_main_id} is better than the one derived in \cref{unbounded main id}. In the non-i.d.\@ case, the condition for the existence of the self-normalized sum given in  \cref{unbounded_main_BE} is more accessible than that in  \cref{bounded_main_non_id}. 
	
	\subsection*{Organization} This paper is structured as follows: We start with recalling the basics of free probability theory and formulate a few auxiliary results in \cref{SNS sec Preliminaries}. In \cref{SNS section preliminaries Bai}, we derive versions of Bai's inequality relating the Kolmogorov distance between two probability measures to their Cauchy transforms. \cref{SNS section bounded} is divided into two parts. In \cref{SNS section bounded proofs}, we prove \cref{bounded_main_non_id,bounded_main_super,bounded_main_id}. Then, in \cref{SNS section bounded example}, we study self-normalized sums of independent GUE matrices, providing an example of how the intuitive approach to self-normalized sums can be applied in the context of random matrices. The results on unbounded self-normalized sums, i.e.\@ \cref{unbounded_main_BE,unbounded main id}, are verified in \cref{SNS sec unbounded}. 
	
	\subsection*{Acknowledgments}  I would like to thank Friedrich Götze for numerous discussions and helpful feedback on this project. Moreover, I thank Franz Lehner for valuable comments.

	\section{Preliminaries} \label{SNS sec Preliminaries}
The aim of this section consists of recalling the basic concepts of free probability theory with main focus on its operator-algebraic aspects. We start with the introduction of non-commutative bounded random variables in the context of $C^*$-probability spaces in \cref{SNS sec preliminaries bounded}, followed by an overview of non-commutative possibly unbounded random variables realized as operators that are affiliated with a von Neumann algebra in \cref{SNS sec: preliminaries unbounded}. In \cref{SNS section preliminaries limit theorems} we discuss rates of convergence in the free central limit theorem in the bounded and unbounded setting.

	\subsection{Bounded random variables} \label{SNS sec preliminaries bounded} 
	In the following, we define $C^*$-probability spaces and introduce non-commutative bounded random variables. We refer to \cite{Nica2006,Kadison1997} for extensive overviews including the definitions and statements given below.
	
	A tuple $(\mathcal{A}, \varphi)$ is a \textit{$*$-probability space} if $\mathcal{A}$ is a unital $*$-algebra over $\mathbb{C}$ with unit $1_{\mathcal{A}} \in \mathcal{A}$ and $\varphi: \mathcal{A} \rightarrow \mathbb{C}$ is a \textit{positive} linear functional with $\varphi(1_{\mathcal{A}}) =1$. Recall that $\varphi$ is said to be positive if we have $\varphi(x^*x) \geq 0$ for all $x \in \mathcal{A}$. For later reference, let us mention that $\varphi$ is called \textit{tracial} if $\varphi(xy) = \varphi(yx)$ holds for all $x,y \in \mathcal{A}$ and \textit{faithful} whenever $\varphi(x^*x) = 0$, $x \in \mathcal{A}$, implies $x=0$. We say that $(\mathcal{A}, \varphi)$ is a \textit{$C^*$-probability space} if it is a $*$-probability space and $\mathcal{A}$ is a unital $C^*$-algebra with  norm $\Vert \cdot \Vert_{\mathcal{A}}: \mathcal{A} \rightarrow [0, \infty)$.
	
	From now on, let $(\mathcal{A}, \varphi)$ be a $C^*$-probability space. Since $\mathcal{A}$ is endowed with an involution, we can define \textit{normal} and \textit{self-adjoint} elements in $\mathcal{A}$ as usual. Let $\Sp(x) := \{ z \in \mathbb{C}: z1_\mathcal{A} - x\,\, \text{is not invertible in $\mathcal{A}$}\}$
	denote the \textit{spectrum} of $x \in \mathcal{A}$. Recall that for any normal $x \in \mathcal{A}$ there exists a \textit{continuous functional calculus} allowing to define $f(x)$ as an element in $\mathcal{A}$ for all continuous functions $f: \Sp(x) \rightarrow \mathbb{C}$. Making use of that  functional calculus, we obtain $\Vert x \Vert_{\mathcal{A}}= \sup \{ \vert z \vert: z \in \Sp(x)\}$ for normal $x \in \mathcal{A}$.
	
	We use the notation $x \geq 0$ to indicate that $x \in \mathcal{A}$ is \textit{positive}, i.e.\@ $x$ is self-adjoint and we have $\Sp(x) \subset [0, \infty)$. It is easy to prove that the set of all positive elements is a convex cone in the real vector space $\mathcal{A}_{\text{sa}}$ of all self-adjoint elements in $\mathcal{A}$. 
	The notion of positivity induces a partial order on $\mathcal{A}_{\text{sa}}$ as follows: We write $x\leq y$ for $x, y \in \mathcal{A}_{\text{sa}}$ if $y-x \geq 0$ holds. Let us recall a few properties of this order: First, for $x,y \in \mathcal{A}_{\text{sa}}$ and $c \in \mathcal{A}$, the relation $x \leq y$ implies $c^*xc \leq c^*yc$. Consequently, for all $x, y \in \mathcal{A}$, we get $0 \leq y^*x^*xy \leq \Vert x \Vert_{\mathcal{A}}^2 \vert y \vert^2$ with $\vert y \vert := (y^*y)^{\nicefrac{1}{2}}$ being defined via functional calculus. Second, for $x,y \in \mathcal{A}_{\text{sa}}$ satisfying $0 \leq x \leq y$, we have $0 \leq x^{\alpha} \leq y^{\alpha}$ for all $\alpha \in (0,1]$, $0 \leq \varphi(x) \leq \varphi(y)$, and $\Vert x \Vert_{\mathcal{A}} \leq \Vert y \Vert_{\mathcal{A}}$. Additionally, if $x$ is invertible in $\mathcal{A}$, then one can prove that $y$ is invertible in $\mathcal{A}$ with $0 \leq y^{-1} \leq x^{-1}$.  
	
	We refer to elements in $\mathcal{A}$ as \textit{(bounded) random variables}. A family of unital subalgebras $(\mathcal{A}_i)_{i \in I}$ in $\mathcal{A}$ for some index set $I$ is called \textit{free} if for any choice of indices $i_1, \dots, i_k \in I$, $k \in \mathbb{N}$, with $i_1 \neq i_2$, $i_2 \neq i_3, \dots, i_{k-1} \neq i_k$ and all $x_j \in \mathcal{A}_{i_j}$ with $\varphi(x_{j}) = 0$, $j \in \{1, \dots, k\}$, we have $\varphi(x_1 \cdots x_k) = 0$. A family of
	random variables $(x_i)_{i \in I} \subset \mathcal{A}$ is said to be free if the unital subalgebras generated by $x_i, i \in I$, are free. 
	The \textit{(analytic) distribution $\mu_x$} of a random variable $x \in \mathcal{A}_{\text{sa}}$ is given by the unique probability measure $\mu_x$ on $\mathbb{R}$ with $\supp \mu_x \subset \Sp(x)$ (or even $\supp \mu_x = \Sp(x)$ if $\varphi$ is faithful) satisfying 
	\begin{align*}
		\varphi(f(x)) = \int_{\mathbb{R}} f(t) \mu_x(dt)
	\end{align*}
	for any continuous function $f: \Sp(x) \rightarrow \mathbb{C}$. A random variable in $\mathcal{A}$ with distribution given by Wigner's semicircle law $\omega$ is called a \textit{standard semicircular random variable}.
	Recalling that the \textit{Cauchy transform} $G_\mu$ of a probability measure $\mu$ on $\mathbb{R}$ is given by
	\begin{align} \label{def cauchy transform}
		G_\mu(z) :=  \int_{\mathbb{R}} \frac{1}{z-t} \mu(dt), \qquad z \in \mathbb{C}^+:= \{ z \in \mathbb{C}: \Im z>0\}, 
	\end{align}
	we can write $\varphi\left((z1_{\mathcal{A}}-x)^{-1}\right) = G_{\mu_x}(z)$ for any self-adjoint $x \in \mathcal{A}$ and all $z \in \mathbb{C}^+$. Sometimes, for $z \in \mathbb{C}$, we  abbreviate $z1_{\mathcal{A}}=z$, where it is clear from the context if we consider $z$ as a complex number or an element in $\mathcal{A}.$
	
	Lastly, let us define \textit{strong convergence} of (tuples of) bounded random variables; compare to \cite{Collins2014}. For each $n \in \mathbb{N}$, let $\smash{x^{(n)} = (x_1^{(n)}, \dots, x_m^{(n)})}$ be an $m$-tuple of random variables in $\mathcal{A}$. We say that $(x^{(n)})_{n\in \mathbb{N}} $ strongly converges to an $m$-tuple $x= (x_1, \dots, x_m)$ of elements in $\mathcal{A}$ as $n\rightarrow \infty$ if we have
	\begin{align*} 
		\lim_{n \rightarrow \infty} \varphi\left( P\big(x^{(n)}, (x^{(n)})^*\big)\right) = \varphi(P(x,x^*)) \qquad \text{and} \qquad \lim_{n \rightarrow \infty} \left\Vert P\big(x^{(n)}, (x^{(n)})^* \big) \right \Vert_{\mathcal{A}}  = \Vert P(x,x^*) \Vert_{\mathcal{A}} 
	\end{align*}
for any polynomial $P$ in $2m$ non-commuting formal variables. Note that the above-mentioned convergence in $\varphi$ is also known as \textit{convergence in distribution}.

	\subsection{\texorpdfstring{Unbounded random variables}{Unbounded random variables}} \label{SNS sec: preliminaries unbounded}
	The extension of bounded random variables to unbounded ones is done in the context of unbounded operators on some Hilbert space that are affiliated with a von Neumann algebra acting on that space. We start with introducing the basic setting, followed by a discussion of affiliated operators in \cref{SNS section preliminaries affiliated}. Lastly, in \cref{SNS section preliminaries nc integration theory}, we recall the theory of non-commutative $\mathcal{L}^p$-spaces.
	
	Throughout this section, let $\mathcal{H}$ be a complex Hilbert space with inner product $\langle \cdot, \cdot \rangle_{\mathcal{H}}$ and let $B(\mathcal{H})$ denote the vector space of bounded linear operators on $\mathcal{H}$. We denote the identity operator in $B(\mathcal{H})$ by $1$, whereas the operator norm is given by $\Vert \cdot \Vert$. Moreover, fix a \textit{$W^*$-probability space} $(\mathcal{A}, \varphi)$ with tracial $\varphi$, i.e.\@  $\mathcal{A}$ is a von Neumann algebra acting on $\mathcal{H}$ and $\varphi: \mathcal{A} \rightarrow \mathbb{C}$ is a linear functional with $\varphi(1) =1$, which is faithful, \textit{normal}, tracial, and positive. Recall that $\varphi$ is normal if we have $\sup_{i \in I}\varphi(T_i)  = \varphi(\sup_{i \in I} T_i)$ for each monotone increasing net $(T_i)_{i \in I}$ of operators in $\mathcal{A}$.  
	
	Before we discuss affiliated operators, we formulate a few results from the spectral theory of self-adjoint operators on $\mathcal{H}$. 
	For a self-adjoint operator $T: D(T) \rightarrow \mathcal{H}$ with domain $D(T) \subset \mathcal{H}$, let $E_T$ denote its \textit{spectral measure} defined on the Borel $\sigma$-algebra $\mathcal{B}(\mathbb{R})$ taking values in the set of orthogonal projections of $\mathcal{H}$. Recall that the \textit{spectrum} of $T$ is given by $\Sp(T) :=\{ z \in \mathbb{C}: z1 -T \,\, \text{is not a bijection of $D(T)$ onto $\mathcal{H}$}\}$ and that the support of $E_T$ is equal to $\Sp(T)$.  For any $E_T$-almost surely (a.s.\@) finite Borel function $f: \mathbb{R} \rightarrow \mathbb{C} \cup \{ \infty\}$, the closed densely defined 
	operator $f(T): D(f(T)) \rightarrow \mathcal{H}$ is given by the \textit{spectral integral} $f(T) = \int_{\mathbb{R}} f(\lambda) E_T(d\lambda)$ with $D(f(T)) \subset \mathcal{H}$ being some suitably defined domain. The assignment $f \mapsto f(T)$ is known as the \textit{functional calculus} for $T$; we refer to \cite[Chapter 5.3]{Schmuedgen2012} for an extensive overview of properties of that calculus. For $M \in \mathcal{B}(\mathbb{R})$, we call $E_T(M)= \mathbf{1}_M(T)$ a \textit{spectral projection}. 
	
	\subsubsection{Affiliated operators}   \label{SNS section preliminaries affiliated} 
	We say that a closed densely defined operator $T$ on $\mathcal{H}$ with polar decomposition $T = V \vert T \vert $ for $\vert T \vert := (T^*T)^{\nicefrac{1}{2}}$, is \textit{affiliated with $\mathcal{A}$} if we have $V, E_{\vert T \vert}(M) \in \mathcal{A}$ for all $M  \in \mathcal{B}(\mathbb{R})$. Note that a bounded operator $T \in B(\mathcal{H})$ is affiliated with $\mathcal{A}$ if and only if $T \in \mathcal{A}$ holds. 
	
	Let $\Aff(\mathcal{A})$ denote the set of all operators on $\mathcal{H}$ which are affiliated with $\mathcal{A}$. Equipped with the operation of taking adjoints, the strong multiplication, and the strong addition (given by the closures of the usual multiplication and addition), $\Aff(\mathcal{A})$ is a $*$-algebra containing $\mathcal{A}$; compare to \cite{Murray1936}.
	From now on, whenever we add or multiply operators in $\Aff(\mathcal{A})$, we always refer to the corresponding strong operation. 
	
   The set of all self-adjoint operators in $\Aff(\mathcal{A})$ is denoted by $\Aff(\mathcal{A})_{\text{sa}}$. As can be shown with the help of \cite[Propositions 4.23 and 5.15]{Schmuedgen2012}, we have $f(T) \in \Aff(\mathcal{A})$ for $T \in \Aff(\mathcal{A})_{\text{sa}}$ and any $E_T$-a.s.\@ finite Borel function $f$. 
	Note that this statement implies that a self-adjoint operator $T$ is affiliated with $\mathcal{A}$ if and only if $f(T) \in \mathcal{A}$ holds for any $E_T$-a.s.\@ bounded Borel function $f$. In particular, for $T \in \Aff(\mathcal{A})_{\text{sa}}$, we get $(z1- T)^{-1} \in \mathcal{A}$ for all $z \not \in \Sp(T)$ and the second resolvent identity remains valid in $\Aff(\mathcal{A})_{\text{sa}}$, i.e.\@ we have
	\begin{align*}
		(z1 -T)^{-1}(T-S)(z1-S)^{-1}  =  (z1-T)^{-1} - (z1-S)^{-1}
	\end{align*}
	for $S,T \in \Aff(\mathcal{A})_{\text{sa}}$ and $z \not \in \Sp(T) \cup \Sp(S)$. Again, for complex $z$, we sometimes abbreviate $z1 = z$.
	
Recall that a self-adjoint operator $T:D(T) \rightarrow \mathcal{H}$ is called positive -- denoted by $T \geq 0$ --  if we have $\langle Tx,x \rangle_{\mathcal{H}} \geq 0$ for any $x \in D(T)$. The convex cone of all positive operators in $\Aff(\mathcal{A})_{\text{sa}}$ is denoted by $\Aff(\mathcal{A})_+$. We can define a partial order on $\Aff(\mathcal{A})_{\text{sa}}$ by writing $S \leq T$ for $S,T \in \Aff(\mathcal{A})_{\text{sa}}$ whenever $T-S \geq 0$ holds. Let us state a few properties of the just-defined order; we refer to \cite[Proposition 1]{Dodds2014} and \cite[Proposition 10.14]{Schmuedgen2012} for the corresponding proofs. First, for $S,T \in \Aff(\mathcal{A})_{\text{sa}}$ with $S \leq T$ and $C \in \Aff(\mathcal{A})$, we have $C^*SC \leq C^*TC$. Second, for $S,T \in \Aff(\mathcal{A})_{\text{sa}}$, the inequality $0 \leq S \leq T$ implies $0 \leq S^{\alpha} \leq T^{\alpha}$ for any $\alpha \in (0,1]$. Moreover, if $S$ is invertible in $\Aff(\mathcal{A})$, then $T$ is invertible in $\Aff(\mathcal{A})$ with $0 \leq T^{-1} \leq S^{-1}$. Third, for  $S,T \in \Aff(\mathcal{A})_+$, we have $S \leq T$ if and only if $\smash{D(T^{\nicefrac{1}{2}}) \subset  D(S^{\nicefrac{1}{2}})}$ and $\Vert S^{\nicefrac{1}{2}}x \Vert \leq \Vert T^{\nicefrac{1}{2}}x \Vert$ hold for all $x \in D(T^{\nicefrac{1}{2}})$.

	As introduced in \cite{Bercovici1993}, we call elements in $\smash{\Aff(\mathcal{A})}$ \textit{(unbounded) random variables}. For a random variable $\smash{T \in \Aff(\mathcal{A})}$ with polar decomposition $T = V \vert T \vert$, let $\smash{W^*(T)} \subset \mathcal{A}$ denote the von Neumann subalgebra generated by $V$ and all spectral projections of $\vert T \vert.$ 
A family of random variables $(T_i)_{i \in I}$  in $ \Aff(\mathcal{A})$ for some index set $I$ is said to be \textit{free} if the von Neumann subalgebras $(W^*(T_i))_{i \in I}$ are free in $\mathcal{A}$; compare to \cref{SNS sec preliminaries bounded} for the definition of freeness in $\mathcal{A}.$ 
	The \textit{(analytic) distribution} $\mu_T$ of an operator $T \in \Aff(\mathcal{A})_{\text{sa}}$ is the unique probability measure on $\mathbb{R}$, concentrated on $\Sp(T)$, satisfying 
	\begin{align*}
		\varphi(f(T)) = \int_{\mathbb{R}} f(t) \mu_T(dt) 
	\end{align*}
	for any ($E_T$-a.s.\@) bounded Borel function $f:\mathbb{R} \rightarrow \mathbb{C}$. As before, the Cauchy transform $G_{\mu_T}$ of $\mu_T$ (as defined in \eqref{def cauchy transform}) can be retrieved via $G_{\mu_T}(z) = \varphi((z1-T)^{-1}) =  \varphi((z-T)^{-1})  $ for $z \in \mathbb{C}^+$. Note that the definitions of freeness and analytic distributions of possibly unbounded random variables are consistent extensions of the corresponding definitions given in the context of bounded random variables. 
	
	For completeness, let us define \textit{free additive convolutions} in the unbounded and bounded setting. Let $X_1, \dots, X_n$ be free self-adjoint (unbounded or bounded) random variables with distributions $\mu_1, \dots, \mu_n$. Then, the distribution of the sum $X_1 + \dots + X_n$ is known as the free additive convolution $\mu_1 \boxplus \dots \boxplus \mu_n.$ We refer to \cite{Mingo2017} and the references given therein for an analytic approach to free additive convolutions.  
	
	\subsubsection{Non-commutative integration theory}  \label{SNS section preliminaries nc integration theory}
	In this section we discuss the non-commutative analog of $\mathcal{L}^p$-spaces. We follow the expositions given in \cite{Fack1986, Hiai2020,Nelson1974}.
	
	The linear functional $\varphi: \mathcal{A} \rightarrow \mathbb{C}$ can be extended to $\Aff(\mathcal{A})_+$ via 
	\begin{align} \label{trace on positive elements}
		\varphi\left( T \right) := \sup_{n \in \mathbb{N}} \varphi\left( \int_0^n \lambda E_T(d\lambda) \right) = \int_0^{\infty} \lambda \varphi\left(E_T(d\lambda)\right) \in [0, \infty], \qquad T \in \Aff(\mathcal{A})_+.
	\end{align}
	Then, the \textit{non-commutative $\mathcal{L}^p$-space on $(\mathcal{A}, \varphi)$} for $0 < p < \infty$ is given by 
	\begin{align*}
		\mathcal{L}^p(\mathcal{A}) = \mathcal{L}^p(\mathcal{A}, \varphi) := \left\{ T \in \Aff(\mathcal{A}) \, \bigg \vert \, \Vert T \Vert_p := \varphi(\vert T \vert^p)^{\nicefrac{1}{p}} < \infty  \right\}.
	\end{align*}
	We set $\mathcal{L}^{\infty}(\mathcal{A}, \varphi) = \mathcal{A}$ and equip $\mathcal{L}^{\infty}(\mathcal{A}, \varphi)$ with the operator norm $\Vert \cdot \Vert$. 
	
	Let us recall a few properties of non-commutative $\mathcal{L}^p$-spaces: Firstly, $(\mathcal{L}^p(\mathcal{A}), \Vert \cdot \Vert_p)$ is a Banach space for $1 \leq p \leq \infty$. Secondly, for $S_1,S_2 \in \mathcal{A}$, $T \in \mathcal{L}^p(\mathcal{A})$, and $0<p \leq \infty$, we have $S_1TS_2 \in \mathcal{L}^p(\mathcal{A})$ with $\Vert S_1TS_2 \Vert_p \leq \Vert S_1 \Vert\Vert S_2 \Vert \Vert T \Vert_p$. Thirdly, Hölder's inequality holds true, i.e.\@ for $0< p,q, r\leq \infty$ with $\smash{r^{-1} = p^{-1} + q^{-1}}$ and for $S \in \mathcal{L}^p(\mathcal{A}), T \in \mathcal{L}^q(\mathcal{A})$, we obtain $ST \in \mathcal{L}^r(\mathcal{A})$ with $ \Vert ST \Vert _r \leq \Vert S \Vert_p \Vert T \Vert_q$. Lastly, we can write
	\begin{align*}
		\Vert T \Vert_p^p = \int_{\mathbb{R}} \vert t \vert^p \mu_T(dt)
	\end{align*}
	for all $T \in \mathcal{L}^p(\mathcal{A}) \cap \Aff(\mathcal{A})_{\text{sa}}$, $0 < p < \infty$, with $\mu_T$ being the analytic distribution of $T.$
	
	For $T \in \Aff(\mathcal{A})$ and $t>0$, the \textit{$t$-th generalized singular number $\mu_t(T)$} is defined by 
	\begin{align*}
		\mu_t(T) := \inf \big\{ s \geq 0: \varphi \big(E_{\vert T \vert}((s, \infty))\big) \leq t \big\}.
	\end{align*}
	One can prove that $\varphi(T) = \int_{0}^{1} \mu_t(T) dt$ is valid for any $T \in \Aff(\mathcal{A})_+$. Together with $\mu_t(S) \leq \mu_t(T)$ holding for $S,T \in \Aff(\mathcal{A})_+$ with $S \leq T$ and all $t>0$, we get $\varphi(S) \leq \varphi(T)$. Moreover, we have
	\begin{align*}
		\Vert T \Vert_p^p = \int_{0}^1 \mu_t(T)^p dt
	\end{align*}
	for all $T \in \Aff(\mathcal{A})$ and $0 < p< \infty$. 
	
	Since $\mathcal{A} \cap \mathcal{L}^1(\mathcal{A})$ is dense in $\mathcal{L}^1(\mathcal{A})$, the functional $\varphi$ defined on $\mathcal{A}$ can be extended uniquely to a positive linear functional  on $\mathcal{L}^1(\mathcal{A})$ (again denoted by $\varphi$) such that $\vert \varphi(T) \vert \leq \Vert T \Vert_1$ holds for any $T \in \mathcal{L}^1(\mathcal{A})$. Let us recall the following properties of this extension: First, for positive $T \in \mathcal{L}^1(\mathcal{A})$, $\varphi(T)$ coincides with the definition in \eqref{trace on positive elements}. Second, note that $\varphi$ is not tracial in general. However, we can at least prove the following: Let $T \in \Aff(\mathcal{A})$. Then, we have $T^*T \in \mathcal{L}^1(\mathcal{A})$ if and only if  $TT^* \in \mathcal{L}^1(\mathcal{A})$, in which case $\varphi(T^*T) = \varphi(TT^*)$ follows; compare to \cite[Lemma 3.1]{Dodds1993}.
Third, if $S,T \in \mathcal{L}^1(\mathcal{A})$ are free, then we have $ST \in \mathcal{L}^1(\mathcal{A})$ and $\varphi(ST) = \varphi(S) \varphi(T) = \varphi(TS)$; see \cite[Section 3]{Lindsay1997} for a reference.

	\subsection{Rates of convergence in the free CLT}  \label{SNS section preliminaries limit theorems}
	The goal of this section is to recall  rates of convergence in the free central limit theorem (CLT). We refer to \cite{Maejima2023,Nica2006} for several versions of this limit theorem, including a free analog of the Lindeberg CLT in \cite[Theorem 4.1]{Maejima2023}. 
	
Before we start with providing the rates of convergence, let us fix some notation: Given two probability measures $\nu_1, \nu_2$ on $\mathbb{R}$, the \textit{Kolmogorov distance} $\Delta(\nu_1, \nu_2)$ between $\nu_1$ and $\nu_2$ is defined by 
	\begin{align} \label{def Kolmogorov distance}
		\Delta(\nu_1, \nu_2) := \sup_{x \in \mathbb{R}} \vert \nu_1((-\infty, x]) - \nu_2((-\infty, x]) \vert.
	\end{align}
 For a family of (bounded or unbounded) free self-adjoint random variables $X_1, \dots, X_n$ with mean zero, variances $\sigma_1^2, \dots, \sigma_n^2$, and finite $k$-th absolute moments, the \textit{$k$-th Lyapunov fraction $L_{kn}$} is given by
	\begin{align*} 
	 L_{kn} := \frac{\sum_{i=1}^n \varphi( \vert X_i \vert^k)}{B_n^k}, \qquad  B_n^2 := \sum_{i=1}^{n} \sigma_i^2, \qquad k \in \mathbb{N}, k \geq 3.
	\end{align*}
As shown in \cite[Chapter IV, §2, Lemma 2]{Petrov1975}, we have 
\begin{align*}
L_{3n} \leq L_{kn}^{\nicefrac{1}{(k-2)}}
\end{align*}
for $k \geq 3.$
In agreement with the notation used in \cref{bounded_main_non_id}, we define the Lyapunov-type fractions $L_{S, kn}$ as follows: If $X_1, \dots, X_n$ are free self-adjoint bounded random variables with mean zero and variances $\sigma_1^2, \dots, \sigma_n^2$ in some $C^*$-probability space $(\mathcal{A}, \varphi)$ with norm $\Vert \cdot \Vert_{\mathcal{A}}$, we set 
\begin{align*}
	 L_{S, kn} := \frac{\sum_{i=1}^n  \Vert X_i \Vert_{\mathcal{A}}^k}{B_n^k}, \qquad k \in \mathbb{N}, k \geq 3,
\end{align*}
where $B_n^2$ is defined as above. The additional subscript $S$ in $L_{S, kn}$ stands for "support" and is meant to indicate the replacement of the absolute moments considered in $L_{kn}$ by appropriate powers of the norms of the underlying random variables. Recall that the norm of a bounded random variable is closely related to the support of its distribution. It is easy to see that 
\begin{align*}
L_{S, kn}^{\nicefrac{1}{k}} \leq L_{S, 3n}^{\nicefrac{1}{3}}
\end{align*}
holds for all $k \geq 3.$

Now, we continue by recalling rates of convergence in the free CLT for bounded random variables. The following theorem summarizes results taken from \cite{Chistyakov2008,Chistyakov2013,Neufeld2023}. 
	
	\begin{theorem} \label{Berry Esseen CLT bounded}
		Let $(\mathcal{A}, \varphi)$ be a $C^*$-probability space with norm $\Vert \cdot \Vert_{\mathcal{A}}$. Choose free self-adjoint random variables $(X_i)_{i \in \mathbb{N}}$ in $\mathcal{A}$ with $\varphi(X_i) = 0$ and $\varphi(X_i^2) = \sigma_i^2 >0$ for all $i \in \mathbb{N}$. For any $n \in \mathbb{N}$, define
		\begin{align*} 
			B_n^2 := \sum_{i=1}^{n} \sigma_i^2, \qquad 	S_n := \frac{1}{B_n}\sum_{i=1}^n X_i,
		\end{align*}
		and let $\mu_{S_n}$ denote the analytic distribution of $S_n$. Then, we have 
		\begin{align*}
			 \Delta(\mu_{S_n}, \omega) \leq C_0L_{S,3n}  \qquad \text{and} \qquad	\Delta(\mu_{S_n}, \omega) \leq C_1L_{3n}^{\nicefrac{1}{2}} 
		\end{align*}
		for constants $C_0, C_1 >0.$
		In the special case of identical distributions with $\varphi(X_1^2) =1$, we obtain
		\begin{align*}
			\Delta(\mu_{S_n}, \omega) \leq C_2\frac{\varphi( \vert X_1 \vert^3)}{\sqrt{n}}
		\end{align*} 
		for some $C_2>0$.
	\end{theorem}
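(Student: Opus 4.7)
The theorem collects three Berry--Esseen type bounds that are already established in the literature, so the plan is to sketch the shared analytic strategy used in the cited works rather than to reprove everything from scratch. The unifying tool is the Cauchy transform combined with free subordination. Let $G_n$ denote the Cauchy transform of $\mu_{S_n}$ and $G_\omega$ that of the semicircle law, with $G_\omega$ satisfying the quadratic identity $G_\omega(z)^2 - zG_\omega(z) + 1 = 0$. By freeness of the summands, there is an analytic self-map $\omega_n$ of $\mathbb{C}^+$ such that $G_n(z) = G_\omega(\omega_n(z))$, so estimating $G_n - G_\omega$ reduces to controlling the subordination error $\omega_n(z) - z$ on a suitable region of $\mathbb{C}^+$.

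To produce such a control, I would expand the fixed-point equation defining $\omega_n$ in terms of the individual Cauchy transforms $G_{X_i/B_n}$ of the normalized summands. A third-order Taylor expansion of $G_{X_i/B_n}$ near infinity yields a remainder whose size is governed by the third free cumulants of the $X_i/B_n$, and these are in turn dominated by either $\varphi(\vert X_i\vert^3)/B_n^3$ or, in the bounded setting where one is free to use the operator norm, by $\Vert X_i\Vert_{\mathcal{A}}^3/B_n^3$. This is exactly where the two Lyapunov-type quantities $L_{3n}$ and $L_{S,3n}$ enter the analysis. Once a pointwise estimate for $\vert G_n(z)-G_\omega(z)\vert$ is obtained at height $v = \operatorname{Im} z$, Bai's inequality from Section 3 converts it into the required Kolmogorov bound after an optimal choice of $v$.

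The three displayed rates then emerge from how tightly $v$ can be pushed toward the real axis. Without boundedness one is forced to take $v$ of order $L_{3n}^{\nicefrac{1}{2}}$, which produces the universal $L_{3n}^{\nicefrac{1}{2}}$ estimate; using the operator-norm bound together with the compactness of the spectra $\sigma(X_i/B_n)$ one may take $v$ of order $L_{S,3n}$, which gives the linear $L_{S,3n}$ rate. In the identically distributed case the additional symmetry allows one to center the leading error term in the subordination expansion and thereby save a further $\sqrt{n}$ factor, producing the sharper $\varphi(\vert X_1\vert^3)/\sqrt{n}$ rate stated in the theorem; this is the argument carried out in \cite{Chistyakov2008}.

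The main obstacle common to all three bounds is maintaining quantitative stability of the subordination equation as $v \to 0$: a naive perturbative inversion breaks down close to the spectrum of $\mu_{S_n}$. The standard remedy, used in each of the cited works, is a bootstrap argument in which a crude estimate on a large part of $\mathbb{C}^+$ is iterated with a uniform derivative bound at the semicircle fixed point to refine the estimate on progressively smaller regions, eventually reaching the scale at which Bai's inequality yields the optimal Kolmogorov bound.
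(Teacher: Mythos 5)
The paper does not prove this theorem at all: it is stated purely as a summary of known results with a citation to \cite{Chistyakov2008,Chistyakov2013,Neufeld2023}, so there is no internal argument to compare against, and your deferral to those references together with a sketch of their Cauchy-transform/Bai-inequality strategy is consistent with the paper's treatment. One small caution: the identity $G_{\mu_{S_n}}(z)=G_\omega(\omega_n(z))$ is not a consequence of freeness (the semicircle law is not one of the convolved measures); in the cited works it arises from the algebraic relation $G_\omega(w)+1/G_\omega(w)=w$, i.e.\ from an (approximate) quadratic functional equation for $G_{\mu_{S_n}}$ on a region where $\vert G_{\mu_{S_n}}\vert\leq 1$, while freeness enters only when estimating the resulting error term through the subordination functions of the individual summands.
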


	The case of (possibly) unbounded random variables was studied in \cite{Chistyakov2008, Chistyakov2013}. The corresponding results read as follows:
	\begin{theorem} \label{Berry Esseen CLT unbounded}
		Let  $(\mathcal{A}, \varphi)$ be a $W^*$-probability space with tracial functional $\varphi$. Choose free self-adjoint random variables $(X_i)_{i \in \mathbb{N}}$ in $\mathcal{L}^3(\mathcal{A}) \subset \Aff(\mathcal{A})$ with $\varphi(X_i) = 0$ and $\varphi(X_i^2) = \sigma_i^2 >0$ for all $i \in \mathbb{N}$. For any $n \in \mathbb{N}$, define $B_n^2, S_n$, and $\mu_{S_n}$ analogously to \cref{Berry Esseen CLT bounded}. Then, we have 
		\begin{align*} 
			\Delta(\mu_{S_n}, \omega) \leq C_1L_{3n}^{\nicefrac{1}{2}}
		\end{align*}
		for a constant $C_1>0$. In the special case of identical distributions with $\varphi(X_1^2) = 1$, we obtain
		\begin{align*}
			\Delta(\mu_{S_n}, \omega) \leq C_2\frac{ \varphi(\vert X_1 \vert^3)}{\sqrt{n}}
		\end{align*} 
		for some $C_2>0$. 
	\end{theorem}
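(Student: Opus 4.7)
The plan is to adopt the Cauchy-transform strategy underlying the free Berry--Esseen results of Chistyakov and G\"otze. Set $G_n(z) := \varphi((z-S_n)^{-1})$ and let $g(z)$ denote the Cauchy transform of $\omega$, characterised by $g(z)^2 - zg(z) + 1 = 0$ on $\mathbb{C}^+$. A Bai-type inequality of the kind announced for \cref{SNS section preliminaries Bai} bounds $\Delta(\mu_{S_n},\omega)$ by an integral of $|G_n(u+iv) - g(u+iv)|$ over $|u|\le A$ together with remainder terms of order $v$, for suitably chosen $A, v > 0$. It therefore suffices to control $|G_n-g|$ on the line $\Im z = v$ with $v$ taken to be an appropriate power of $L_{3n}$.

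To control this difference, set $Y_j := X_j/B_n$ and $S_n^{(j)} := S_n - Y_j$, so that $Y_j$ is free from $S_n^{(j)}$. Starting from $\sum_j \varphi((z-S_n)^{-1} Y_j) = zG_n(z) - 1$, two applications of the resolvent identity together with freeness and the identities $\varphi(Y_j) = 0$, $\sum_j \varphi(Y_j^2) = 1$ produce an approximate self-consistent equation
\begin{equation*}
G_n(z)^2 - zG_n(z) + 1 = \sum_{j=1}^n r_j(z),
\end{equation*}
in which each remainder $r_j(z)$ is a trace of a product of two resolvents and three factors of $Y_j$, supplemented by a lower-order term of the form $(G_{S_n^{(j)}}(z)^2 - G_n(z)^2)\sigma_j^2/B_n^2$ that reflects the dependence of the averaged resolvent on $j$. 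Applying the non-commutative H\"older inequality from \cref{SNS section preliminaries nc integration theory} together with $\|(z-T)^{-1}\|\le v^{-1}$ for $T\in\Aff(\mathcal{A})_{\text{sa}}$ yields $|r_j(z)| \lesssim v^{-3}\varphi(|X_j|^3)/B_n^3$, so that $\bigl|\sum_j r_j(z)\bigr| \lesssim L_{3n}/v^3$.

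Writing $G_n = g + \delta$ and subtracting the semicircle equation gives
\begin{equation*}
\delta(z)\bigl(2g(z)-z+\delta(z)\bigr) = -\sum_{j=1}^n r_j(z),
\end{equation*}
where $2g(z)-z = -\sqrt{z^2-4}$ is bounded away from zero on the interior of $[-2,2]+iv$ and decays only like $\sqrt{v}$ in a $v$-neighbourhood of the edges $\pm 2$. A bootstrap argument, refined near the edges by splitting the integration interval and estimating the edge strips separately, then provides a bound for $|\delta(z)|$ that, once inserted into the Bai-type estimate and optimised in $v$, yields the desired rate $\Delta(\mu_{S_n},\omega)\lesssim L_{3n}^{1/2}$. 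For the sharper i.i.d.\ bound one additionally exploits that all $X_j$ share a common distribution, so that the analytic $R$-transform equation for $S_n$ becomes diagonal and a refined subordination argument avoids the half-power loss of the general estimate, producing $\Delta(\mu_{S_n},\omega)\lesssim \varphi(|X_1|^3)/\sqrt{n}$.

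The main obstacle is that unboundedness precludes operator-norm control of products involving the $Y_j$; cancellations must be extracted at the level of traces via freeness, and all error estimates must pass through H\"older's inequality in the non-commutative $\mathcal{L}^p$-spaces on $(\mathcal{A},\varphi)$ rather than through uniform norms as in the bounded setting. A secondary difficulty is to keep the bootstrap consistent near the spectral edges of $\omega$, where $2g(z)-z$ is small and the quadratic correction $\delta(z)^2$ can a priori compete with the linear factor $2g(z)-z$; this has to be handled on a progressively enlarged subset of the strip rather than globally.
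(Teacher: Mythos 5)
You should first be aware that the paper contains no proof of \cref{Berry Esseen CLT unbounded}: it is quoted verbatim from Chistyakov and G\"otze \cite{Chistyakov2008,Chistyakov2013} (later the author even reaches into the internals of that proof, citing inequalities (6.38)--(6.44) of \cite{Chistyakov2008}). So your proposal has to stand on its own, and as a standalone argument it has a genuine quantitative gap. Your self-consistent equation $G_n(z)^2-zG_n(z)+1=\sum_j r_j(z)$ with $|r_j(z)|\lesssim (\Im z)^{-3}\varphi(|X_j|^3)/B_n^3$ is correct as far as it goes, but the resulting error $\epsilon(v)\asymp L_{3n}v^{-3}$ is too lossy to produce the claimed rate. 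Feeding $\delta(2g-z+\delta)=-\sum_j r_j$ into a Bai-type inequality, one gets $|\delta|\lesssim \epsilon/|2g-z|$ on the stable branch and $|\delta|\lesssim\sqrt{\epsilon}$ in the edge strips, and the Kolmogorov bound becomes (up to logarithms) $\Delta(\mu_{S_n},\omega)\lesssim \epsilon(v)+v$; optimising in $v$ gives $v\asymp L_{3n}^{\nicefrac{1}{4}}$ and hence only the rate $L_{3n}^{\nicefrac{1}{4}}$ (with the G\"otze--Tikhomirov variant, where the problematic term is $\sup_x\int_v^1|G_n-G_\omega|\,dy$, the same bookkeeping yields roughly $L_{3n}^{\nicefrac{1}{3}}$). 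Reaching $L_{3n}^{\nicefrac{1}{2}}$ requires strictly better control of the remainders than $\|(z-T)^{-1}\|\le(\Im z)^{-1}$ raised to the third power: one must exploit $L^2$-resolvent identities such as $\varphi(|(z-S_n)^{-1}|^2)=|\Im G_{S_n}(z)|/\Im z$ (exactly the device the paper uses in its bounded argument) or the full analytic subordination machinery of \cite{Chistyakov2008}, in which the subordination functions satisfy $\Im Z_n(z)\ge\Im z$ and the error terms carry imaginary-part factors rather than raw powers of $(\Im z)^{-1}$. Your sketch acknowledges that unboundedness forces trace-level estimates, but it never supplies the mechanism that removes the extra powers of $v^{-1}$, and without it the "optimisation in $v$" cannot deliver $L_{3n}^{\nicefrac{1}{2}}$.

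The i.i.d.\ refinement is an even larger gap: the statement that "a refined subordination argument avoids the half-power loss, producing $\varphi(|X_1|^3)/\sqrt{n}$" is precisely the content of the corresponding theorem in \cite{Chistyakov2008}, whose proof is a substantial complex-analytic analysis of the functional equation for the reciprocal Cauchy transform; asserting it is not proving it. In short, the architecture you describe (Bai inequality, approximate quadratic equation, stable-branch dichotomy, edge splitting) is the right family of tools, but the quantitative core — error bounds strong enough to survive the optimisation and the separate i.i.d.\ analysis — is missing, which is exactly why the paper cites these results instead of proving them.
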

	
	For completeness, let us mention that throughout this work, rates of convergence in the free law of large numbers (LLN; compare to \cite{Bercovici1996, Lindsay1997} for references) will be of relevance too. Since those can be derived easily, we do not formulate them here. Instead, we just refer to \cref{bounded_Vn is invertible} and \eqref{LLN unbounded rate of convergence example}, where we calculate the corresponding rates exemplarily.
	
 	\section{Variations of Bai's inequality} \label{SNS section preliminaries Bai}
	
	In this section we prove two inequalities relating the Kolmogorov distance between two probability measures to their Cauchy transforms. A first contribution in that direction is due to Bai \cite[Section 2]{Bai1993}. Making use of Cauchy's integral theorem, Götze and Tikhomirov \cite[Corollary 2.3]{Goetze2003} established a version of Bai's inequality. The following theorem is a generalization of their result. 
	
	\begin{theorem} \label{Bai Goetze Version more general}
		Let $\mu$ and $\nu$ be probability measures on $\mathbb{R}$ with distribution functions $F_\mu$ and $F_\nu$ and Cauchy transforms $G_\mu$ and $G_\nu$. Assume that 
		\begin{align} \label{Bai Bed Integral endlich}	
			\int_{-\infty}^{\infty} \vert F_\mu(x) - F_\nu(x) \vert dx < \infty
		\end{align}
		holds. Choose $v\in (0,1), \varepsilon \in (0,2)$, and $a, \gamma >0$ in such a way that
		\begin{align*}
			\gamma = \frac{1}{\pi} \int_{\vert x \vert < a} \frac{1}{1+x^2} dx > \frac{1}{2} \qquad \text{and} \qquad \varepsilon>2va
		\end{align*}
		are satisfied. Then, we have 
		\begin{align*}
			\!\!\!	\Delta(\mu, \nu) \leq &\, C_\gamma \left(  \int_{-\infty}^{2} \!\vert G_\mu(u+i) - G_\nu(u+i) \vert du + \sup_{x \in [-2+\nicefrac{\varepsilon}{2}, 2-\nicefrac{\varepsilon}{2}]} \int_{v}^{1} \! \vert G_\mu(x+iy) - G_\nu(x+iy) \vert dy \right.\\ &  \left. \qquad \,\,\, + \frac{1}{v} \sup_{x \in \mathbb{R}} \int_{\vert y \vert < 2va} \vert F_\nu(x) - F_\nu(x+y) \vert dy +  \gamma \pi \max\{ F_\nu(-2+\varepsilon), 1-F_\nu(2-\varepsilon)\} \right), 
		\end{align*}
		where $C_\gamma>0$ is given by $C_\gamma := ((2\gamma -1)\pi)^{-1}.$
		\begin{proof}  
			Let $v, \varepsilon, a$, and $\gamma$ be as given above and define 
			\begin{align*}
				I_\varepsilon :=  [-2+\varepsilon, 2-\varepsilon], \qquad I_\varepsilon' := [-2+\nicefrac{\varepsilon}{2}, 2 - \nicefrac{\varepsilon}{2}], \qquad \Delta_\varepsilon(\mu, \nu) := \sup_{x \in I_\varepsilon} \vert F_{\mu}(x) - F_\nu(x) \vert.
			\end{align*}
			Following some of the ideas presented in \cite{Bai1993,Goetze2003}, we proceed in several steps: We start with establishing two preliminary bounds on $\Delta(\mu, \nu)$; see \eqref{estimate Delta and Delta_epsilon} and \eqref{Bai Götze Version intermediate ineq} below. The first one shows that it suffices to analyze $\Delta_\varepsilon(\mu, \nu)$, whereas the second one provides an estimate for $\Delta_\varepsilon(\mu, \nu)$. Lastly, using Cauchy's integral theorem, we obtain the claimed inequality.
			
			Let us begin by relating $\Delta(\mu, \nu)$ to $\Delta_\varepsilon(\mu, \nu)$. Observe that 
			\begin{align*}
				\Delta(\mu, \nu) = \max\left\{ \Delta_\varepsilon(\mu, \nu), \sup_{x > 2-\varepsilon} \vert F_\mu(x) - F_\nu(x) \vert, \sup_{x<-2+\varepsilon} \vert F_\mu(x) -F_\nu(x) \vert \right\}
			\end{align*}
			holds. In order to bound the term $\sup_{x > 2-\varepsilon} \vert F_\mu(x) - F_\nu(x) \vert$, note that we have 
			\begin{align*}
				1-F_\nu(2-\varepsilon) \geq 1-F_\nu(x) &\geq F_\mu(x) - F_\nu(x) \\ &\geq F_\mu(2-\varepsilon)  - F_\nu(2-\varepsilon) -1 + F_\nu(2-\varepsilon) \geq - \Delta_\varepsilon(\mu, \nu)- 1 + F_\nu(2-\varepsilon)
			\end{align*}
			for any $x > 2-\varepsilon$.
			This implies 
			\begin{align*}
				\sup_{x > 2-\varepsilon}	\vert F_\mu(x) - F_\nu(x) \vert \leq \max\{ 1- F_\nu(2-\varepsilon), \Delta_\varepsilon(\mu, \nu) +1 - F_\nu(2-\varepsilon) \} = \Delta_\varepsilon(\mu, \nu) +1 - F_\nu(2-\varepsilon).
			\end{align*}
			Similarly, for any $x < -2+\varepsilon$, we get
			\begin{align*}
				-F_\nu(-2+\varepsilon) \leq F_\mu(x) - F_\nu(-2+\varepsilon) &\leq F_\mu(x) - F_\nu(x)  \\ & \leq F_\mu(-2+\varepsilon) - F_\nu(-2+\varepsilon) + F_\nu(-2+\varepsilon) \leq \Delta_\varepsilon(\mu, \nu) + F_\nu(-2+\varepsilon)
			\end{align*}
			yielding the estimate
			\begin{align*}
				\sup_{x < -2+\varepsilon}	\vert F_\mu(x) - F_\nu(x) \vert \leq  \Delta_\varepsilon(\mu, \nu) +  F_\nu(-2+\varepsilon).
			\end{align*}
			Hence, we deduce
			\begin{align} \label{estimate Delta and Delta_epsilon}
				\Delta(\mu, \nu) \leq \Delta_\varepsilon(\mu, \nu) + \max\{F_\nu(-2+\varepsilon), 1-F_\nu(2-\varepsilon)\}.
			\end{align}
			
			We continue by bounding the quantity $\Delta_\varepsilon(\mu, \nu)$. Due to \eqref{estimate Delta and Delta_epsilon} and $\gamma(2\gamma -1)^{-1} \geq 1$, we are allowed to assume that $\Delta_\varepsilon(\mu, \nu) >0$ holds. 
			By definition of $\Delta_\varepsilon(\mu, \nu)$, there exists a sequence $(t_n)_{n \in \mathbb{N}} \subset I_\varepsilon$ with $\lim_{n \rightarrow \infty} \vert F_\mu(t_n)-F_\nu(t_n)  \vert=\Delta_\varepsilon(\mu, \nu)$. After passing to a convergent subsequence if necessary, it follows $\lim_{n \rightarrow \infty} F_\mu(t_{n})-F_\nu(t_{n}) \in \{\Delta_\varepsilon(\mu, \nu),-\Delta_\varepsilon(\mu, \nu)\}$. 
			
			Let us start with considering the case $\lim_{n \rightarrow \infty} F_\mu(t_n)-F_\nu(t_n) = \Delta_\varepsilon(\mu, \nu)$. By integration by parts for the Riemann-Stieltjes integral, we have
			\begin{align*}
				\Im G_{\mu}(u+iv) =	\int_{-\infty}^{\infty} \frac{v}{(y-u)^2 + v^2} \mu(dy) = - \int_{-\infty}^{\infty} \frac{2v(y-u)F_\mu(y)}{((y-u)^2 + v^2)^2} dy, \qquad u \in \mathbb{R}.
			\end{align*}
			Clearly, a similar equation holds for $\nu$. For any $x \in \mathbb{R}$, we obtain
			\begin{align*}
				\left \vert  \int_{-\infty}^x \Im(G_\mu(u+iv) 
				- G_\nu(u+iv)) du \right \vert
				&\geq \int_{-\infty}^x \left( \int_{-\infty}^\infty  \frac{2v(y-u)(F_\mu(y) - F_\nu(y))}{((y-u)^2 + v^2)^2}dy \right)du \\
				&= \int_{-\infty}^\infty (F_\mu(y) - F_\nu(y))
				\left( \int_{-\infty}^x \frac{2v(y-u)}{((y-u)^2 + v^2)^2}du\right)dy \\
				& = \int_{-\infty}^\infty 
				\frac{(F_\mu(y) - F_\nu(y))v}{v^2+(x-y)^2}dy \\
				& = \int_{-\infty}^\infty \frac{F_\mu(x-vy) - F_\nu(x-vy)}{y^2+1} dy.
			\end{align*}
		Note that we applied the Fubini-Tonelli theorem in the first equality, which is possible due to \eqref{Bai Bed Integral endlich}.
			The monotonicity of $F_\mu$ implies
			\begin{align*}
				\int_{\vert y \vert < a} \!\! \frac{F_\mu(x-vy) - F_\nu(x-vy)}{y^2+1} dy 
				& \geq  \gamma\pi(F_\mu(x-va) - F_\nu(x-va)) - \int_{\vert y \vert < a} \!\! \vert F_\nu(x-vy) - F_\nu(x-va) \vert dy \\ & = \gamma\pi(F_\mu(x-va) - F_\nu(x-va)) -  \frac{1}{v}\int_{\vert y \vert < va} \!\!\vert F_\nu(x-y) - F_\nu(x-va) \vert dy
			\end{align*}
			for $x$ as above.
			By observing that 
			\begin{align*}
				\int_{\vert y \vert \geq a} \frac{F_\mu(x-vy) - F_\nu(x-vy)}{y^2+1} dy \geq -\Delta(\mu, \nu)  \int_{\vert y \vert \geq a} \frac{1}{y^2+1} dy = -\Delta(\mu, \nu)(1-\gamma)\pi
			\end{align*}
			and $t_n + va \in I_\varepsilon'$ hold, we get
			\begin{align*}
				\sup_{x \in I_\varepsilon'} & \, \left \vert\int_{-\infty}^x \Im(G_\mu(u+iv) - G_\nu(u+iv))  du \right \vert \\  \geq  & \, \left \vert  \int_{-\infty}^{t_n + va} \Im(G_\mu(u+iv) 
				- G_\nu(u+iv)) du \right \vert \\ \geq & \,  \gamma \pi(F_\mu(t_n)- F_\nu(t_n))   -\frac{1}{v} \int_{\vert y \vert < va} \vert F_\nu(t_n) - F_\nu(t_n + va -y) \vert dy  - (1-\gamma)\pi\Delta(\mu, \nu)
			\end{align*}
			for any $n \in \mathbb{N}$.
		Using integration by substitution, it follows
			\begin{align*}
				\int_{\vert y \vert < va} \!\! \vert F_\nu(t_n) - F_\nu(t_n + va -y) \vert dy  & = \!\int_{0}^{2va} \!\! \vert F_\nu(t_n) - F_\nu(t_n + y) \vert dy \leq \sup_{x \in \mathbb{R}}\int_{\vert y \vert < 2va}  \!\!  \vert F_\nu(x) - F_\nu(x+y) \vert dy.
			\end{align*}
			Combining the last two inequalities and passing to the limit $n \rightarrow \infty$, we arrive at
			\begin{align*}
				\sup_{x \in I_\varepsilon'}	& \, \left \vert  \int_{-\infty}^x \Im(G_\mu(u+iv) 
				- G_\nu(u+iv)) du \right \vert \\ \geq  & \, \gamma \pi\Delta_\varepsilon(\mu, \nu)   -\frac{1}{v} \sup_{x \in \mathbb{R}}\int_{\vert y \vert < 2va} \!\! \vert F_\nu(x) - F_\nu(x+y) \vert dy - (1-\gamma)\pi\Delta(\mu, \nu). 
			\end{align*}
			Together with \eqref{estimate Delta and Delta_epsilon}, we deduce 
			\begin{align*}
				\sup_{x \in I_\varepsilon'} &\, \left \vert \int_{-\infty}^x \Im(G_\mu(u+iv) 
				- G_\nu(u+iv)) du \right \vert \\ \geq &\, (2\gamma -1) \pi \Delta_\varepsilon(\mu, \nu)- (1-\gamma)\pi \max\{F_\nu(-2+\varepsilon), 1-F_\nu(2-\varepsilon)\} \\ &\,\,\,\, - \frac{1}{v}\sup_{x \in \mathbb{R}}\int_{\vert y \vert < 2va} \!\! \vert F_\nu(x) - F_\nu(x+y) \vert dy.
			\end{align*}
			
			In the second case, i.e.\@ if $\lim_{n \rightarrow \infty} F_\mu(t_n) - F_\nu(t_n) =-\Delta_\varepsilon(\mu, \nu)$ holds, we switch the roles of $F_\mu$ and $F_\nu$ and obtain the same inequality as above. 
			In total, we have proved
			\begin{align} \label{Bai Götze Version intermediate ineq}
				\begin{split}
					\Delta_\varepsilon(\mu, \nu) &\leq \frac{1}{(2\gamma -1)\pi} \sup_{x \in I_\varepsilon'}  \bigg \vert\int_{-\infty}^x \Im(G_\mu(u+iv) - G_\nu(u+iv))  du \bigg \vert \\ & \qquad + \frac{1}{( 2\gamma -1)\pi}  \frac{1}{v}\sup_{x\in \mathbb{R}}\int_{\vert y \vert < 2va} \vert F_\nu(x) - F_\nu(x+y) \vert dy   \\ & \qquad \, + \frac{1-\gamma}{2\gamma -1} \max\{F_\nu(-2+\varepsilon), 1-F_\nu(2-\varepsilon)\}.
				\end{split}
			\end{align}
			
			Lastly, we apply Cauchy's integral theorem for rectangular contours to the first integral on the right-hand side in \eqref{Bai Götze Version intermediate ineq}. For any fixed $x \in I_\varepsilon'$ and $M>0$ with $x > -M$, it follows
			\begin{align*} 
				\begin{split}
					\int_{-M}^x \!\! G_\mu(u+iv) - G_\nu(u+iv)  du  = \!&\int_{-M}^x  \!\! G_\mu(u+i)  - G_\nu(u+i) du  
					+ i\int_{v}^{1} \! \! G_\mu(-M+iy) - G_\nu(-M+iy) dy \\ & \qquad - i
					\int_{v}^{1} \!\! G_\mu(x+iy) - G_\nu(x+iy)  dy.
				\end{split}
			\end{align*}
			Observe that 
			\begin{align*}
				\vert G_\mu(-M + iy) \vert \leq \int_{\vert t \vert \leq \nicefrac{M}{2}} \frac{1}{\vert -M + iy -t \vert} \mu(dt) +\frac{1}{y} \int_{\vert t \vert > \nicefrac{M}{2}} \mu(dt) \leq \frac{2}{M} + \frac{\mu ( ( \nicefrac{M}{2}, \infty))}{y}
			\end{align*}
			holds for any $y>0.$ An analog inequality is valid for $G_\nu(-M + iy).$ 
			Thus, we have
			\begin{align*}
				\left \vert \int_{v}^{1}  G_\mu(-M+iy) - G_\nu(-M+iy) dy \right \vert \leq \frac{4}{M} +  \big(\mu ( ( \nicefrac{M}{2}, \infty)) + \nu ( ( \nicefrac{M}{2}, \infty)) \big) \vert \! \log(v) \vert \rightarrow 0 
			\end{align*}
			as $M \rightarrow \infty$. By the dominated convergence theorem, it follows
			\begin{align*}
				\int_{-\infty}^x \! \Im \left( G_\mu(u+ib) - G_{\nu}(u+ib) \right)   du 	=  \lim_{M \rightarrow \infty} \Im \int_{-M}^x  G_\mu(u+ib) - G_\nu(u+ib)du, \qquad b \in \{v,1\}
			\end{align*}
			leading to 
			\begin{align*}
				\int_{-\infty}^x \! \Im (G_\mu(u+iv) - G_\nu(u+iv))  du  = & \int_{-\infty}^x  \! \Im (G_\mu(u+i)  - G_\nu(u+i)) du  \\ & \qquad 
				- i	\int_{v}^{1} \! \Im (G_\mu(x+iy) - G_\nu(x+iy))  dy
			\end{align*}
			for all $x \in I_\varepsilon'$. This implies 
			\begin{align*} 
					\sup_{x \in I_\varepsilon'} \bigg \vert\int_{-\infty}^x \Im(G_\mu(u+iv) - G_\nu(u+iv))  du \bigg \vert & \leq  \int_{-\infty}^2 \vert G_\mu(u+i) - G_\nu(u+i) \vert du \\ & \qquad + \sup_{x \in I_\varepsilon'} \int_{v}^{1}  \vert G_\mu(x+iy) - G_\nu(x+iy) \vert dy. 
			\end{align*}
			Combining \eqref{estimate Delta and Delta_epsilon} and \eqref{Bai Götze Version intermediate ineq} with the inequality above, the claim follows.
		\end{proof}
	\end{theorem}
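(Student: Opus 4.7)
The plan is to follow the general philosophy of the Bai--Götze inequality: first reduce the Kolmogorov distance on $\mathbb{R}$ to its version over a compact interval where the semicircle-type normalization is meaningful, then convert this localized sup via the Poisson kernel into an integral of the imaginary part of $G_\mu-G_\nu$ along a horizontal line low in $\mathbb{C}^+$, and finally use analyticity to lift this contour up to height $1$ (plus a vertical correction). I would organize the argument in three distinct steps corresponding to these three ideas.

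\textbf{Step 1 (localization).} Let $\Delta_\varepsilon(\mu,\nu)$ denote the sup of $|F_\mu-F_\nu|$ over $I_\varepsilon=[-2+\varepsilon,2-\varepsilon]$. I would split
\[
\Delta(\mu,\nu)=\max\!\Bigl\{\Delta_\varepsilon(\mu,\nu),\sup_{x>2-\varepsilon}|F_\mu-F_\nu|(x),\sup_{x<-2+\varepsilon}|F_\mu-F_\nu|(x)\Bigr\},
\]
and estimate the two tail suprema directly: for $x>2-\varepsilon$, monotonicity of $F_\mu,F_\nu$ together with $F_\mu(x)\le 1$ and $F_\nu(x)\ge F_\nu(2-\varepsilon)$ sandwiches $F_\mu-F_\nu$ between $1-F_\nu(2-\varepsilon)$ and $-\Delta_\varepsilon-(1-F_\nu(2-\varepsilon))$; symmetrically on the left. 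This yields $\Delta(\mu,\nu)\le\Delta_\varepsilon(\mu,\nu)+\max\{F_\nu(-2+\varepsilon),1-F_\nu(2-\varepsilon)\}$, which isolates the Cauchy-transform task to $I_\varepsilon$.

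\textbf{Step 2 (Poisson smoothing).} Starting from the identity $\Im G_\mu(u+iv)=\int v/((y-u)^2+v^2)\,\mu(dy)$, I would integrate by parts in $y$ to rewrite this as a linear functional of $F_\mu$; integrability of $F_\mu-F_\nu$ (assumption \eqref{Bai Bed Integral endlich}) lets me apply Fubini after integrating $u$ from $-\infty$ to $x$, arriving at the clean identity
\[
\int_{-\infty}^{x}\Im(G_\mu-G_\nu)(u+iv)\,du=\int_{-\infty}^{\infty}\frac{F_\mu(x-vy)-F_\nu(x-vy)}{y^2+1}\,dy.
\]
Choosing a sequence $t_n\in I_\varepsilon$ with $|F_\mu(t_n)-F_\nu(t_n)|\to\Delta_\varepsilon$ and evaluating the identity at $x=t_n+va\in I_\varepsilon'$ (the condition $\varepsilon>2va$ is used precisely here), I would split the integral over $\{|y|<a\}$ and $\{|y|\ge a\}$. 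The first piece produces, up to a small Lipschitz-type defect of $F_\nu$ across intervals of length $2va$, the main term $\gamma\pi(F_\mu-F_\nu)(t_n)$; the second is controlled by $-(1-\gamma)\pi\Delta(\mu,\nu)$. Passing $n\to\infty$ and combining with the reduction from Step 1 absorbs the $(1-\gamma)\pi\Delta(\mu,\nu)$ term onto the left side, yielding the coefficient $(2\gamma-1)\pi$ and hence the constant $C_\gamma$.

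\textbf{Step 3 (contour deformation).} The integral obtained in Step 2 sits at height $v$, but the theorem states the bound in terms of $G_\mu-G_\nu$ at height $1$. Since $G_\mu-G_\nu$ is holomorphic on $\mathbb{C}^+$, I would apply Cauchy's theorem to the rectangle with vertices $-M+iv,\,x+iv,\,x+i,\,-M+i$ to get
\[
\int_{-M}^{x}(G_\mu-G_\nu)(u+iv)\,du=\int_{-M}^{x}(G_\mu-G_\nu)(u+i)\,du+i\!\int_{v}^{1}\!\!(G_\mu-G_\nu)(-M+iy)\,dy-i\!\int_{v}^{1}\!\!(G_\mu-G_\nu)(x+iy)\,dy,
\]
and send $M\to\infty$, using the crude estimate $|G_\mu(-M+iy)|\le 2/M+\mu((M/2,\infty))/y$ (and likewise for $\nu$) to kill the left vertical side; dominated convergence then gives the analogous identity for imaginary parts integrated to $-\infty$. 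Taking absolute values and a supremum over $x\in I_\varepsilon'$ produces exactly the horizontal integral at height $1$ (over $(-\infty,2]$) and the vertical integral between heights $v$ and $1$ that appear in the statement.

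\textbf{Expected main obstacle.} The computational heart of the argument is routine, but the bookkeeping is delicate: one must choose the evaluation point $t_n+va$ inside $I_\varepsilon'$ (hence the coupling $\varepsilon>2va$), keep careful track of how the tail term $\max\{F_\nu(-2+\varepsilon),1-F_\nu(2-\varepsilon)\}$ reappears after absorbing $(1-\gamma)\pi\Delta$, and verify the hypotheses of Fubini using exactly the integrability assumption \eqref{Bai Bed Integral endlich}. The rest of the constants then fall out mechanically with the factor $C_\gamma=((2\gamma-1)\pi)^{-1}$.
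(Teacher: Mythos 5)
Your proposal is correct and follows essentially the same route as the paper's proof: the same localization inequality $\Delta\le\Delta_\varepsilon+\max\{F_\nu(-2+\varepsilon),1-F_\nu(2-\varepsilon)\}$, the same Poisson-kernel identity with Fubini justified by \eqref{Bai Bed Integral endlich} and evaluation at $t_n+va\in I_\varepsilon'$, and the same rectangular contour argument with the $2/M+\mu((M/2,\infty))/y$ tail bound. The only detail worth adding in a write-up is the case distinction when $\lim_n(F_\mu(t_n)-F_\nu(t_n))=-\Delta_\varepsilon$, handled in the paper by swapping the roles of $F_\mu$ and $F_\nu$, but this is routine and does not affect the structure of your argument.
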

	
	The result of the following corollary can be obtained easily from \cref{Bai Goetze Version more general}. The idea is to replace the domain of integration $(-\infty, 2)$ of the first integral in the upper bound for the Kolmogorov distance as given in \cref{Bai Goetze Version more general} by a bounded interval at the cost of gaining an additional term in that upper bound. The proof given below is inspired by the proof of \cite[Corollary 2.3]{Bai1993}.
	\begin{kor} \label{Bai Goetze more general endl. Intgrenzen}
		In the setting of Theorem \ref{Bai Goetze Version more general}, we additionally choose $A>B>0$ such that 
		\begin{align*}
		\kappa := \frac{2B}{\pi (A-B)(2\gamma -1)} <1
		\end{align*}
		holds. Then, we have
		\begin{align*}
			\Delta(\mu, \nu) \leq & \, C_{\gamma, \kappa} \bigg( \int_{-A}^2 \vert G_\mu(u+i) - G_\nu(u+i) \vert du +  \sup_{x \in [-2+\nicefrac{\varepsilon}{2}, 2-\nicefrac{\varepsilon}{2}]} \int_{v}^1 \vert G_\mu(x+iy) - G_\nu(x+iy) \vert dy\\ & \qquad  + 
			\frac{1}{v} \sup_{x \in \mathbb{R}} \int_{\vert y \vert < 2va} \vert F_\nu(x) - F_\nu(x+y) \vert dy +  \pi  \int_{\vert x \vert > B}  \vert F_\mu (x) - F_\nu(x) \vert dx \\ & \qquad  + \gamma \pi\max\{ F_\nu(-2+\varepsilon), 1-F_\nu(2-\varepsilon)\} \bigg),
		\end{align*}
		where $C_{\gamma, \kappa}>0$ is given by $C_{\gamma, \kappa} := ((2\gamma -1)\pi(1-\kappa))^{-1}.$
		
		\begin{proof} 
			Choose  $A > B >0$ as in the premise. Then, with the help of integration by parts and the Fubini-Tonelli theorem, we obtain
			\begin{align*}
				&\int_{-\infty}^{-A} \left \vert   G_\mu(u+i) - G_\nu(u+i) \right \vert du
				\\ &\qquad \leq  \int_{-\infty}^{-A}   \left \vert \int_{-B}^B \frac{F_\mu(x) - F_\nu(x)}{(x-(u+i))^2}dx \right \vert du  +  \int_{-\infty}^{-A}   \left \vert \int_{\vert x \vert > B}\frac{F_\mu(x) - F_\nu(x)}{(x-(u+i))^2}dx \right \vert du   	\\
				& \qquad \leq 
				2B\Delta(\mu, \nu) \int_{-\infty}^{-A} \frac{1}{(u+B)^2} du + \int_{\vert x \vert > B}  \vert F_\mu (x) - F_\nu(x) \vert \left( \int_{-\infty}^{-A} \frac{1}{(x-u)^2+1} du \right) dx\\
				&\qquad  \leq \frac{2B\Delta(\mu, \nu)}{A-B} + \pi  \int_{\vert x \vert > B}  \vert F_\mu (x) - F_\nu(x) \vert dx.
			\end{align*}
			Combining this calculation with Theorem \ref{Bai Goetze Version more general} and recalling the notation $I_\varepsilon' :=  [-2+\nicefrac{\varepsilon}{2}, 2-\nicefrac{\varepsilon}{2}]$, it follows 
			\begin{align*}
				(2\gamma -1) \pi	\Delta(\mu, \nu) \leq &  \int_{-A}^{2}\vert G_\mu(u+i) - G_\nu(u+i) \vert du +  \frac{2B\Delta(\mu, \nu)}{A-B} + \pi  \int_{\vert x \vert > B}  \vert F_\mu (x) - F_\nu(x) \vert dx \\ &   \,\,\, + \sup_{x \in I_\varepsilon'} \int_{v}^{1}  \vert G_\mu(x+iy) - G_\nu(x+iy) \vert dy  +
				\frac{1}{v} \sup_{x \in \mathbb{R}} \int_{\vert y \vert < 2va} \vert F_\nu(x) - F_\nu(x+y) \vert dy  \\ &  \,\,\, + \gamma \pi\max\{ F_\nu(-2+\varepsilon), 1-F_\nu(2-\varepsilon)\},
			\end{align*}	 
			which implies the claim.
		\end{proof}
	\end{kor}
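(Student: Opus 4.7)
The plan is to derive \cref{Bai Goetze more general endl. Intgrenzen} directly from \cref{Bai Goetze Version more general} by performing surgery on the single summand $\int_{-\infty}^{2}\vert G_\mu(u+i)-G_\nu(u+i)\vert du$: split it as $\int_{-A}^{2}\cdots + \int_{-\infty}^{-A}\cdots$ and estimate the tail in terms of $\Delta(\mu,\nu)$ plus a genuinely new term $\int_{\vert x\vert>B}\vert F_\mu(x)-F_\nu(x)\vert dx$. The factor $(1-\kappa)^{-1}$ in $C_{\gamma,\kappa}$ should then arise from absorbing the $\Delta(\mu,\nu)$-contribution back onto the left-hand side, which is exactly what the hypothesis $\kappa<1$ is designed to enable.

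For the tail estimate I would first use integration by parts, which is legal thanks to \eqref{Bai Bed Integral endlich}, to rewrite
\begin{align*}
G_\mu(u+i)-G_\nu(u+i) = \int_{\mathbb{R}}\frac{F_\mu(x)-F_\nu(x)}{(x-u-i)^2}dx.
\end{align*}
Then I split the $x$-integral into $\vert x\vert\le B$ and $\vert x\vert>B$. On the inner piece I bound $\vert F_\mu-F_\nu\vert\le \Delta(\mu,\nu)$ pointwise and use $\vert x-u-i\vert\ge \vert u+B\vert$ for $u<-A$, which after integrating $u$ gives a contribution of size $2B\Delta(\mu,\nu)/(A-B)$. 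On the outer piece I instead perform the $u$-integration first, bounding $\int_{-\infty}^{-A}((x-u)^2+1)^{-1}du\le \pi$, and Fubini then converts the $x$-integration into $\pi\int_{\vert x\vert>B}\vert F_\mu-F_\nu\vert dx$.

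Substituting back into the inequality furnished by \cref{Bai Goetze Version more general} produces, after moving the factor $((2\gamma-1)\pi)^{-1}$ through, exactly a summand $\kappa\,\Delta(\mu,\nu)$ on the right-hand side. Since $\kappa<1$, absorbing it gives $(1-\kappa)\Delta(\mu,\nu)\le(\text{remaining terms})$, and dividing through recovers the constant $C_{\gamma,\kappa}=((2\gamma-1)\pi(1-\kappa))^{-1}$ advertised in the statement.

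The main obstacle is a bookkeeping one rather than a conceptual one: matching the constant that multiplies $\Delta(\mu,\nu)$ after the tail estimate with the $\kappa$ defined in the hypothesis, so that absorption produces the clean prefactor $(1-\kappa)^{-1}$. Ensuring the Fubini-Tonelli applications are justified uses only the integrability assumption \eqref{Bai Bed Integral endlich}, and no additional analytic input beyond \cref{Bai Goetze Version more general} is required.
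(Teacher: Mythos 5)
Your proposal matches the paper's own proof essentially verbatim: the same splitting of the tail integral at $-A$, the same integration-by-parts representation of $G_\mu - G_\nu$, the same decomposition of the $x$-integral at $\vert x \vert = B$ with the bound $2B\Delta(\mu,\nu)/(A-B)$ on the inner piece and $\pi\int_{\vert x \vert > B}\vert F_\mu - F_\nu\vert dx$ on the outer piece via Fubini-Tonelli, followed by absorbing the $\kappa\,\Delta(\mu,\nu)$ term using $\kappa < 1$. The argument is correct as outlined.
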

	We end this section with a remark concerning the integrability condition \eqref{Bai Bed Integral endlich} and the special case that one of the measures appearing in \cref{Bai Goetze Version more general} or \cref{Bai Goetze more general endl. Intgrenzen} is given by Wigner's semicircle law.
	
	\begin{rem} \label{Bai Bem Wigner} 
		Assume that we are in one of the settings given in the last two results.
		\begin{enumerate}[(i)]
			\item As can be seen easily by the layer cake representation, the condition \eqref{Bai Bed Integral endlich} is satisfied if the measures $\mu$ and $\nu$ both have finite second moment.
			\item If we choose $\nu = \omega$, then some of the terms in the previously derived upper bounds on the Kolmogorov distance can be calculated easily. More precisely, we have
			\begin{align*}
			\qquad \,\,\,	\frac{1}{v} \sup_{x \in \mathbb{R}} \int_{\vert y \vert < 2va} \vert F_\omega(x) - F_\omega(x+y) \vert dy \leq \frac{4a^2v}{\pi} \,\,\,\,\, \text{and} \,\,\,\,\,
				\max\{ F_\omega(-2+\varepsilon), 1-F_\omega(2-\varepsilon)\} \leq \frac{\varepsilon^{\nicefrac{3}{2}}}{\pi},
			\end{align*}
			where $F_\omega$ denotes the distribution function of $\omega.$ The first inequality can be proven by using the fact that the density of Wigner's semicircle law is bounded by $\pi^{-1}$, whereas the second one follows easily by symmetry.
		\end{enumerate}
	\end{rem}

	\section{Bounded self-normalized sums} \label{SNS section bounded}
	The main goal of this section is to analyze self-normalized sums of free self-adjoint bounded random variables, i.e.\@ of random variables contained in some $C^*$-algebra. In \cref{SNS section bounded proofs}, we provide the proofs of \cref{bounded_main_non_id} and
	\cref{bounded_main_super,bounded_main_id}. Later, in \cref{SNS section bounded example}, we show how some of the ideas presented in those proofs can be used to establish a convergence result for self-normalized sums of independent GUE matrices.
	
	\subsection{Proofs of Theorem  \texorpdfstring{\ref{bounded_main_non_id}}{1.1} and Corollaries \texorpdfstring{\ref{bounded_main_super}}{1.2} and \texorpdfstring{\ref{bounded_main_id}}{1.3}} \label{SNS section bounded proofs}
	This section is structured as follows: We start by proving the results in \cref{bounded_main_non_id}, from which the statements of \cref{bounded_main_super,bounded_main_id} follow without much effort. Lastly, we comment on an alternative proof of some of the claims in \cref{bounded_main_non_id}.
	
	In the proof of all three results, we work in the following underlying setting: Let $(\mathcal{A}, \varphi)$ be a $C^*$-probability space with unit $1 = 1_{\mathcal{A}} \in \mathcal{A}$, norm $\Vert \cdot \Vert = \Vert \cdot \Vert_{\mathcal{A}}: \mathcal{A} \rightarrow [0, \infty)$, and faithful tracial  $\varphi$. Fix a sequence  $(X_i)_{i \in \mathbb{N}} \subset \mathcal{A}$ of free self-adjoint random variables with $\varphi(X_i) = 0$ and $\varphi(X_i^2) = \sigma_i^2$ for all $i \in \mathbb{N}$. Let $B_n^2, L_{S, 3n}$, and $L_{S, 4n}$ be defined as in \cref{bounded_main_non_id} and assume that $B_n^2 >0$ holds true. In contrast to the definitions given in \cref{bounded_main_non_id}, we set
	\begin{align} \label{bounded - normalized versions of S_n, V_n^2}
		S_n = \frac{1}{B_n} \sum_{i=1}^n X_i, \qquad V_n^2 = \frac{1}{B_n^2} \sum_{i=1}^n X_i^2.
	\end{align}
	Note that this change does not have any effect on the self-normalized sum $U_n = V_n^{-\nicefrac{1}{2}} S_n V_n^{-\nicefrac{1}{2}}$ whenever it exists in $\mathcal{A}$. In the case of existence, we let $\mu_n$ denote the analytic distribution of $U_n$, whereas $F_n$ and $G_n$ denote the distribution function and the Cauchy transform of $\mu_n$. Moreover, the Cauchy transform of the analytic distribution $\mu_{S_n}$ of $S_n$ will be referred to as $G_{S_n}.$ \\ 
	
In order to prove \cref{bounded_main_non_id}, we proceed in three steps: First, we show that $U_n$ is well-defined in $\mathcal{A}$ under certain conditions. For better comparability with the unbounded case, this will be outsourced and done separately in \cref{bounded_Vn is invertible}. The key ingredient in the proof of this lemma is the fact that $V_n^2 $ is close to $1$ in norm.  
	
	Second, in order to establish the claimed rate of convergence, we apply a version of Bai's inequality. Then, it remains to bound the difference between the Cauchy transforms $G_n$ and $G_\omega$. Together with the above-mentioned fact concerning $V_n^2$, we obtain that $G_n$ is close to $G_{S_n}$, whereas the free Berry-Esseen theorem from \cref{Berry Esseen CLT bounded} helps to control the difference between $G_{S_n}$ and $G_{\omega}$. Note that this procedure strongly resembles the intuitive approach used for self-normalized sums in classical probability theory: We consider the sums $S_n$ and $V_n^2$ separately and make use of the free CLT -- in form of the corresponding Berry-Esseen theorem -- and a statement similar to the free LLN -- in form of the closeness of $V_n^2$ to $1$. As already indicated in \cref{SNS sec: Introduction free case}, the machinery of Cauchy transforms serves as a substitute for Slutsky's theorem and thus helps to combine the results concerning $S_n$ and $V_n^2$. 
	
	Third, the convergence claims at the end of \cref{bounded_main_non_id} follow from certain calculations made in the second step without much effort. \\
	
	We begin by proving that the self-normalized sum $U_n$ is well-defined in $\mathcal{A}$. For this purpose, it suffices to ensure that $\smash{V_n^{2}}$ is invertible in $\mathcal{A}$. This can be done with the help of the next two lemmata. The first one (following from \cite[Lemma 3.1.5]{Kadison1997}) is rather standard, but we include it for completeness. 
	\begin{lem} \label{Lemma Lin}
		Let $a \in \mathcal{A}$ with $\Vert  1-a\Vert <1$. Then, $a$ is invertible in $\mathcal{A}$ and we have
		\begin{align*} a^{-1} = \sum_{k=0}^\infty (1-a)^k , \qquad 	\left \Vert a^{-1} \right \Vert \leq \frac{1}{1 - \Vert 1 -a \Vert}, \qquad \left \Vert 1 - a^{-1}  \right \Vert \leq \frac{\Vert 1-a \Vert}{1- \Vert 1- a \Vert}.
		\end{align*}
	\end{lem}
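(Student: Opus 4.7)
The plan is to exploit that $(\mathcal{A}, \Vert \cdot \Vert)$ is in particular a unital Banach algebra, so the standard Neumann series argument applies verbatim. Setting $b := 1 - a$, the assumption reads $\Vert b \Vert < 1$, and I want to construct $a^{-1}$ explicitly as $\sum_{k=0}^\infty b^k$.

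First I would observe that the partial sums $S_N := \sum_{k=0}^N b^k$ form a Cauchy sequence in $\mathcal{A}$: for $N > M$, submultiplicativity of $\Vert \cdot \Vert$ yields
\begin{align*}
\Vert S_N - S_M \Vert \leq \sum_{k=M+1}^N \Vert b \Vert^k \leq \frac{\Vert b \Vert^{M+1}}{1 - \Vert b \Vert},
\end{align*}
which tends to $0$ as $M \to \infty$. Since $\mathcal{A}$ is complete, the limit $S := \sum_{k=0}^\infty b^k \in \mathcal{A}$ exists, and by continuity of the norm it satisfies $\Vert S \Vert \leq \sum_{k=0}^\infty \Vert b \Vert^k = (1 - \Vert b \Vert)^{-1}$, which is the second claimed inequality.

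Next I would verify that $S$ is indeed the two-sided inverse of $a = 1 - b$. By continuity of left and right multiplication in a Banach algebra, the telescoping computation
\begin{align*}
aS = (1-b) \sum_{k=0}^\infty b^k = \lim_{N \to \infty} \sum_{k=0}^N (b^k - b^{k+1}) = \lim_{N \to \infty} (1 - b^{N+1}) = 1,
\end{align*}
works since $\Vert b^{N+1} \Vert \leq \Vert b \Vert^{N+1} \to 0$; the same argument with multiplication on the right gives $Sa = 1$. Hence $a$ is invertible with $a^{-1} = S$, and the first two assertions are established.

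Finally, for the third inequality I would write $1 - a^{-1} = 1 - \sum_{k=0}^\infty b^k = -\sum_{k=1}^\infty b^k$ and estimate by the geometric series
\begin{align*}
\Vert 1 - a^{-1} \Vert \leq \sum_{k=1}^\infty \Vert b \Vert^k = \frac{\Vert b \Vert}{1 - \Vert b \Vert} = \frac{\Vert 1 - a \Vert}{1 - \Vert 1 - a \Vert}.
\end{align*}
There is no genuine obstacle here: the proof is essentially the Neumann series in a unital Banach algebra, and the only ingredients used are completeness of $\mathcal{A}$, submultiplicativity of $\Vert \cdot \Vert$, and joint continuity of multiplication — all of which are available in any $C^*$-algebra. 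The lemma is stated merely to fix notation and constants for its application to $a = V_n^2$ in the subsequent proof that $U_n$ is well-defined.
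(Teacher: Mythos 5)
Your proof is correct and is precisely the standard Neumann series argument that the paper itself invokes: the lemma is stated without proof and attributed to \cite[Lemma 3.1.5]{Kadison1997}, whose content is exactly this geometric-series construction of $a^{-1}$ together with the two norm bounds. There is nothing to add or correct.
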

	
	The second lemma provides an upper bound on the norm of sums of free self-adjoint random variables in $\mathcal{A}$ proven by Voiculescu in \cite[Lemma 3.2]{Voiculescu1986}. In contrast to the formulation given below, Voiculescu's version of the lemma is written in terms of free additive convolutions. Let us remark that this change of perspective is possible since $\varphi$ is faithful. Throughout the rest of this work, we abbreviate $[k] := \{ 1, \dots, k\}$ for $k \in \mathbb{N}$. 
	\begin{lem} \label{bounded lemma voiculescu support}
		Let $a_1, a_2, \dots, a_k$  be free self-adjoint random variables in $\mathcal{A}$ with $\varphi(a_i) = 0$ for all $i \in [k]$. Then, we have
		\begin{align*}
			\left \Vert a_1 + \dots + a_k \right \Vert \leq \max_{i \in [k]} \Vert a_i \Vert + 2 \left( \sum_{i=1}^k \varphi \big(a_i^2 \big)\right)^{\nicefrac{1}{2}}.
		\end{align*}
		\end{lem}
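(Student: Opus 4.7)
The plan is to realize the family $(a_i)$ via the standard free product of pointed Hilbert spaces and to split each $a_i$ into a creation, a preservation, and an annihilation part whose contributions are straightforward to estimate. For each $i$, I apply the GNS construction to the restriction of $\varphi$ to the unital $*$-subalgebra $\mathcal{A}_i \subset \mathcal{A}$ generated by $a_i$, obtaining a pointed Hilbert space $(H_i, \xi_i)$ with unit cyclic vector $\xi_i$ implementing $\varphi|_{\mathcal{A}_i}$. Since $a_i^* = a_i$ and $\varphi(a_i) = 0$, the vector $\eta_i := a_i \xi_i$ lies in $H_i^{\circ} := H_i \ominus \mathbb{C}\xi_i$ and satisfies $\Vert \eta_i \Vert^2 = \varphi(a_i^2)$. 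I then form the free product of pointed Hilbert spaces $(H, \Omega) = \ast_{i=1}^{k}(H_i, \xi_i)$; by standard free product theory together with the faithfulness of $\varphi$, the induced joint action of the $a_i$ on $H$ has the same $C^*$-norm as the original one, so it is enough to bound $\Vert \sum_i a_i \Vert$ on $H$.

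The space $H$ has the orthogonal decomposition
\begin{align*}
H = \mathbb{C}\Omega \oplus \bigoplus_{n \geq 1} \bigoplus_{i_1 \neq i_2 \neq \cdots \neq i_n} H_{i_1}^{\circ} \otimes \cdots \otimes H_{i_n}^{\circ}.
\end{align*}
With respect to this decomposition I write $a_i = L_i + M_i + L_i^*$, where $L_i$ is the creation-type piece sending any reduced word $\xi$ not starting with index $i$ (including $\Omega$) to $\eta_i \otimes \xi$; the self-adjoint piece $M_i$ vanishes on words not starting with $i$ and, on a word $\zeta \otimes \xi'$ with $\zeta \in H_i^{\circ}$ and $\xi'$ not starting with $i$, acts on the first factor by the $H_i^{\circ}$-compression of $a_i$; and $L_i^*$ is the adjoint annihilation piece sending such a word to $\langle \zeta, \eta_i \rangle \xi'$. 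A direct calculation on reduced words, using $a_i \xi_i = \eta_i$ and the decomposition $a_i \zeta = \langle \zeta, \eta_i \rangle \xi_i + (a_i \zeta)^{\circ}$ for $\zeta \in H_i^{\circ}$, confirms this identity.

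Three estimates then close the argument. First, for distinct $i$ the operators $M_i$ have pairwise orthogonal domains and ranges, indexed by the first letter of the word, and each $M_i$ is a compression of $a_i$, so $\Vert \sum_i M_i \Vert = \max_i \Vert M_i \Vert \leq \max_i \Vert a_i \Vert$. Second, the ranges of the $L_i$ lie in the pairwise orthogonal subspaces of words whose first letter belongs to $H_i^{\circ}$, hence $L_j^* L_i = 0$ for $i \neq j$; since $L_i^* L_i$ acts as multiplication by $\Vert \eta_i \Vert^2$ on its support and vanishes elsewhere, this gives $\Vert \sum_i L_i \Vert^2 = \Vert \sum_i L_i^* L_i \Vert \leq \sum_i \Vert \eta_i \Vert^2 = \sum_i \varphi(a_i^2)$. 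Third, $\Vert \sum_i L_i^* \Vert = \Vert \sum_i L_i \Vert$ by taking adjoints, and the triangle inequality delivers the claimed bound. The main obstacle will be setting up the creation-preservation-annihilation splitting precisely enough to verify the identity $a_i = L_i + M_i + L_i^*$ on all reduced words, which requires a careful case distinction according to whether the word on which $a_i$ acts begins with index $i$; once that bookkeeping is in place the three norm estimates are routine.
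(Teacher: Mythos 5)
Your proof is correct, and it is essentially the argument the paper relies on: the paper does not prove this lemma itself but quotes it from Voiculescu \cite[Lemma 3.2]{Voiculescu1986}, whose original proof is exactly your free-product realization with the creation/preservation/annihilation splitting $a_i = L_i + M_i + L_i^*$, the orthogonality of the subspaces of words starting with a fixed letter, and the bound $\Vert \sum_i L_i \Vert^2 = \Vert \sum_i L_i^* L_i \Vert \leq \sum_i \varphi(a_i^2)$. The only spot to phrase with care is the transfer of the norm bound back to $\mathcal{A}$: you do not need the free product action to have \emph{the same} $C^*$-norm, only the inequality $\Vert \sum_i a_i \Vert_{\mathcal{A}} \leq \Vert \sum_i \tilde a_i \Vert_{B(H)}$, which follows since faithfulness of $\varphi$ gives $\Vert x \Vert_{\mathcal{A}} = \lim_m \varphi(x^{2m})^{\nicefrac{1}{2m}}$ for self-adjoint $x$ and the moments of $\sum_i a_i$ agree with those of its free-product model by freeness.
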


	Using the previous lemmata, we are able to derive a few useful results concerning the random variable $V_n^2$  -- namely its invertibility and closeness to $1$  -- under the assumption that $L_{S,4n}$ is sufficiently small. 
	\begin{lem}\label{bounded_Vn is invertible} Assume that $L_{S, 4n} < \nicefrac{1}{16}$ holds. Then, we have:
		\begin{enumerate}[(i)]
			\item $\left \Vert V_n^2 -1 \right \Vert < 4L_{S, 4n}^{\nicefrac{1}{2}}$,
			\item  $V_n^2$, $V_n$, and $V_n^{\nicefrac{1}{2}}$ are invertible in $\mathcal{A}$.
		\end{enumerate}
		\begin{proof}
			
			By \cref{bounded lemma voiculescu support} and due to the assumption $L_{S, 4n} < \nicefrac{1}{16}$ , we get
			\begin{align*}
				\left 	\Vert V_n^2 -1 \right \Vert = \left \Vert \sum_{i=1}^{n} \frac{X_i^2 - \sigma_i^2}{B_n^2} \right \Vert \leq  \frac{ 2\max_{i \in [n]}  \Vert X_i \Vert^2}{B_n^2} + 2 \left(  \sum_{i=1}^{n} \varphi\left( \frac{(X_i^2 - \sigma_i^2)^2}{B_n^4} \right) \right)^{\nicefrac{1}{2}} < 4L_{S, 4n}^{\nicefrac{1}{2}} <1.
			\end{align*}
			Combining \cref{Lemma Lin} with the inequality above, we obtain that $V_n^2$ is invertible in $\mathcal{A}$. Next, observe that $0 \leq 1 \leq V_n +1$ holds, implying that $V_n +1$ is invertible in $\mathcal{A}$. Its inverse satisfies $\smash{0 \leq (V_n+1)^{-1} \leq 1}$, which leads to $\smash{\Vert (V_n+1)^{-1} \Vert \leq 1}$. Since we can write  $V_n - 1 = (V_n+1)^{-1}(V_n^2 - 1)$, it follows
			\begin{align*}
				\Vert V_n-1 \Vert \leq \left\Vert (V_n+1)^{-1} \right \Vert  \left\Vert V_n^2 -1 \right \Vert \leq \left \Vert V_n^2 -1 \right\Vert <1.
			\end{align*}
			Hence, by \cref{Lemma Lin}, we deduce that $V_n$ is invertible in $\mathcal{A}$. The invertibility of  $\smash{V_n^{\nicefrac{1}{2}}}$ can be shown analogously.
		\end{proof}
	\end{lem}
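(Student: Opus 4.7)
The plan is to reduce everything to the fact that $V_n^2$ is close to the identity in operator norm, via Voiculescu's support estimate from \cref{bounded lemma voiculescu support}, and then to bootstrap invertibility using the Neumann-type estimate of \cref{Lemma Lin} together with the positivity of $V_n$.

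For part (i) I would write $V_n^2 - 1 = \sum_{i=1}^n Y_i$ with $Y_i := B_n^{-2}(X_i^2 - \sigma_i^2)$. Each $Y_i$ is self-adjoint, satisfies $\varphi(Y_i) = 0$ by definition of $\sigma_i^2$, and the family $(Y_i)_{i \in [n]}$ is free since $Y_i$ lies in the unital $*$-subalgebra generated by $X_i$. Voiculescu's inequality then gives
\begin{align*}
\| V_n^2 - 1 \| \leq \max_{i \in [n]} \| Y_i \| + 2 \Bigl( \sum_{i=1}^n \varphi(Y_i^2) \Bigr)^{\nicefrac{1}{2}}.
\end{align*}
For the first term I would use $\| X_i^2 - \sigma_i^2 \| \leq 2\|X_i\|^2$ (valid since $\sigma_i^2 \leq \|X_i\|^2$), yielding $\max_i \|Y_i\| \leq 2 \max_i \|X_i\|^2 / B_n^2 \leq 2 L_{S,4n}^{\nicefrac{1}{2}}$. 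For the variance term, $\varphi((X_i^2 - \sigma_i^2)^2) = \varphi(X_i^4) - \sigma_i^4 \leq \|X_i\|^4$, so $\sum_i \varphi(Y_i^2) \leq L_{S, 4n}$ and this piece contributes at most $2 L_{S,4n}^{\nicefrac{1}{2}}$. Altogether $\|V_n^2 - 1\| < 4 L_{S,4n}^{\nicefrac{1}{2}}$, which under the hypothesis $L_{S, 4n} < \nicefrac{1}{16}$ is strictly less than $1$.

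For part (ii), invertibility of $V_n^2$ is then immediate from \cref{Lemma Lin}. To pass from $V_n^2$ to $V_n$ I would use positivity: since $V_n \geq 0$ one has $V_n + 1 \geq 1$, so $V_n + 1$ is invertible with $\|(V_n + 1)^{-1}\| \leq 1$. The factorization $V_n - 1 = (V_n + 1)^{-1}(V_n^2 - 1)$ then gives $\|V_n - 1\| \leq \|V_n^2 - 1\| < 1$, and \cref{Lemma Lin} yields invertibility of $V_n$. Repeating the same factorization with $V_n^{\nicefrac{1}{2}}$ in place of $V_n$, namely $V_n^{\nicefrac{1}{2}} - 1 = (V_n^{\nicefrac{1}{2}} + 1)^{-1}(V_n - 1)$, concludes the argument for $V_n^{\nicefrac{1}{2}}$.

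The only conceptual point worth highlighting, rather than a real technical obstacle, is the recognition that the free law-of-large-numbers phenomenon for $V_n^2$ is encoded quantitatively in Voiculescu's norm bound for centered free sums; from there everything reduces to standard functional calculus in a $C^*$-algebra. The mildest subtlety lies in the square-root descent from $V_n^2$ down to $V_n^{\nicefrac{1}{2}}$, which is handled uniformly by the factorization trick above rather than by direct spectral reasoning.
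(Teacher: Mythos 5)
Your proposal is correct and follows essentially the same route as the paper: Voiculescu's norm bound applied to the free, centered, self-adjoint summands $B_n^{-2}(X_i^2-\sigma_i^2)$ to get $\Vert V_n^2-1\Vert<4L_{S,4n}^{\nicefrac{1}{2}}<1$, then the Neumann-series lemma for $V_n^2$, and the factorizations $V_n-1=(V_n+1)^{-1}(V_n^2-1)$ and $V_n^{\nicefrac{1}{2}}-1=(V_n^{\nicefrac{1}{2}}+1)^{-1}(V_n-1)$ together with $\Vert(V_n+1)^{-1}\Vert\leq 1$ to descend to $V_n$ and $V_n^{\nicefrac{1}{2}}$, exactly as in the paper's proof.
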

	
	Now, let us complete the proof of \cref{bounded_main_non_id}.
	\begin{proof}[Proof of Theorem \ref{bounded_main_non_id}]  
		In the setting fixed at the beginning of this section, assume that	$L_{S,4n} < \nicefrac{1}{16}$ holds. 
		According to \cref{bounded_Vn is invertible}, $U_n$ is well-defined in $\mathcal{A}$, which proves the first claim in \cref{bounded_main_non_id}. 
		
		We continue by proving the second claim of the theorem dealing with the rate of convergence of $\mu_n$ to Wigner's semicircle law $\omega$ under the conditions $L_{S, 4n} < \nicefrac{1}{16}$ and $L_{S, 3n}< \nicefrac{1}{2e}$. Let us temporarily assume the stronger condition $\smash{L_{S, 4n} < \nicefrac{1}{(160e)^2}}$. Then, we can apply Bai's inequality as given in \cref{Bai Goetze more general endl. Intgrenzen} with the parameters
		\begin{align*} 
			A = 8, \qquad B = 3, \qquad  a=2, \qquad \gamma > 0.7, \qquad v = \max\left\{ L_{S, 3n}, 80L_{S, 4n}^{\nicefrac{1}{2}}\right\}, \qquad \varepsilon =6v.
		\end{align*} 
		Note that the integrability of $\vert F_n - F_\omega \vert$ as requested in \eqref{Bai Bed Integral endlich} is guaranteed due to the fact that $U_n$ is an element in $\mathcal{A}$ and thus $\mu_n$ has finite second moment; compare to \cref{Bai Bem Wigner}. 
		
		Let us proceed by showing that $\mu_n$ has compact support in $[-3,3]$. This will allow us to simplify Bai's inequality; see \eqref{bounded - Bai intermediate} below. In view of the calculations made in the proof of \cref{bounded_Vn is invertible}, we have 
		\begin{align*}
			\Vert V_n - 1 \Vert \leq \big \Vert V_n^2 -1 \big \Vert \leq 4L_{S, 4n}^{\nicefrac{1}{2}} < \frac{1}{40}
		\end{align*}
		leading to $\Vert V_n^{-1} \Vert < \nicefrac{40}{39}$ with the help of \cref{Lemma Lin}.
		By \cref{bounded lemma voiculescu support}, we obtain
		\begin{align}  \label{bounded_support ungleichung_Sn}
			\left \Vert S_n  \right \Vert \leq  2 +\frac{ \max_{i \in [n]} \Vert X_i \Vert }{B_n} \leq 2+ L_{S,4n}^{\nicefrac{1}{4}} < \frac{21}{10}.
		\end{align}
		It follows
		\begin{align}  \label{bounded_support ungleichung}
			\Vert U_n \Vert = \left \Vert V_n^{-\nicefrac{1}{2}} S_n V_n^{-\nicefrac{1}{2}}  \right \Vert \leq \left \Vert V_n^{-\nicefrac{1}{2}} \right \Vert^2  \left\Vert S_n \right\Vert =  \left \Vert V_n^{-1} \right \Vert  \left\Vert S_n \right\Vert \leq \frac{40}{39} \left \Vert S_n \right\Vert <3,
		\end{align}
		which proves $\supp \mu_n \subset [-3, 3]$ as claimed above. Combining this with $\supp \omega = [-2,2]$ and \cref{Bai Bem Wigner}, Bai's inequality from \cref{Bai Goetze more general endl. Intgrenzen} reduces to
		\begin{align} \label{bounded - Bai intermediate}
			\begin{split}
				C_{\gamma, \kappa}^{-1}\Delta\left(\mu_n, \omega \right) & \leq \frac{16v}{\pi} + \gamma \varepsilon^{\nicefrac{3}{2}} + \int_{-8}^2 \vert G_n(u+i)-G_\omega(u+i)  \vert du \\ & \qquad + \sup_{x \in [-2 + \nicefrac{\varepsilon}{2}, 2 - \nicefrac{\varepsilon}{2}]}  \int_{v}^1 \vert G_n(x+iy) - G_\omega(x+iy) \vert dy
			\end{split}
		\end{align}
		with $C_{\gamma, \kappa}$ being the constant from \cref{Bai Goetze more general endl. Intgrenzen}.
		
		It suffices to bound the difference between the Cauchy transforms $G_n$ and $G_\omega$ evaluated at certain $z \in A_v := \{ z \in \mathbb{C}^+: \vert \Re z \vert \leq 8, 1 \geq  \Im z  \geq v\}.$
		Before we continue with this, let us do some preparatory work, mainly consisting of the analysis of the random variables $(zV_n - S_n)^{-1}$ and $(z-S_n)^{-1}$ for $z \in A_v.$  
		Due to
		\begin{align*}
			zV_n - S_n = V_n^{\nicefrac{1}{2}} (z - U_n) V_n^{\nicefrac{1}{2}}
		\end{align*}
		and since $V_n^{\nicefrac{1}{2}}$ is invertible, we obtain that $zV_n - S_n$ is invertible in $\mathcal{A}$ for any $z \in \mathbb{C}^+$.
		For $z \in A_v$, define $W_z:=z(z-S_n)^{-1}(V_n-1) \in \mathcal{A}$ and $U_z:= 1 + W_z \in \mathcal{A}$. Then, because of $v \geq 20 \Vert V_n - 1\Vert$, we have 
		\begin{align*}
			\Vert W_z \Vert \leq \frac{\vert z \vert}{\Im z} \Vert V_n-1 \Vert \leq \frac{9}{ \Im z }\Vert V_n-1 \Vert \leq  \frac{9}{v}\Vert V_n-1 \Vert < \frac{1}{2}
		\end{align*}
		for any $z \in A_v.$
		Thus, by \cref{Lemma Lin}, $U_z$ is invertible in $\mathcal{A}$ with
		\begin{align*}
			\left \Vert U_z^{-1}  \right \Vert \leq \frac{1}{1- \Vert W_z \Vert} < 2
		\end{align*}
		for $z \in A_v$.
		Consequently, we can write
		\begin{align*}
			(zV_n-S_n)^{-1} = \left( (z-S_n) \left( 1 +  z(z-S_n)^{-1}(V_n-1) \right) \right)^{-1} = U_z^{-1} (z-S_n) ^{-1}, \qquad z \in A_v.
		\end{align*}
		Using the inequality $0 \leq y^*x^*xy \leq \Vert x \Vert^2 \vert y \vert^2$ holding for any $x,y \in \mathcal{A}$, we get
		\begin{align*}
			\begin{split}
				\varphi \left(\left \vert \left(zV_n - S_n  \right)^{-1}\right\vert^2\right)  
				= \varphi\left((\bar{z}-S_n)^{-1}\big(U_z^{-1}\big)^* U_z^{-1}(z-S_n)^{-1} \right)	<  4\varphi\left(\left\vert (z-S_n)^{-1} \right\vert^2 \right)
			\end{split}
		\end{align*}
		for all $z \in A_v.$
		The resolvent identity implies
		\begin{align*}
			\begin{split}
				\varphi\left(\left\vert (z-S_n)^{-1} \right\vert^2 \right)
				&=\varphi\left( (\bar{z}-S_n)^{-1}(z - S_n)^{-1} \right)
				= \frac{\varphi\left((\bar{z}-S_n)^{-1} \right) - \varphi\left( (z - S_n)^{-1} \right)}{z-\bar{z}} \\ &= - \frac{\Im \left(\varphi\left( (z-S_n)^{-1}\right) \right)}{\Im z} =  \frac{ \vert \Im \left( G_{S_n}(z) \right) \vert}{\Im z} 
			\end{split}
		\end{align*}
		for all $z \in \mathbb{C}^+.$ By integration by parts and the inequality $\vert G_\omega \vert \leq 1$ holding in $\mathbb{C}^+$ (see \cite[Lemma 8]{Kargin2007a}), we obtain
		\begin{align*}
			\vert G_{S_n}(z) \vert \leq 1+ \vert G_{S_n}(z) - G_\omega(z) \vert \leq  1 + \frac{\pi \Delta(\mu_{S_n}, \omega)}{\Im z}, \qquad z \in \mathbb{C}^+. 
		\end{align*}
		\cref{Berry Esseen CLT bounded} yields 
		\begin{align*}
			\vert G_{S_n}(z) \vert  \leq 1 + \frac{\pi C_0 L_{S,3n}}{\Im z} \leq 1 + \pi C_0
		\end{align*}
		for all $z \in \mathbb{C}^+$ with $\Im z \geq L_{S, 3n}$, where $C_0>0$ is taken from \cref{Berry Esseen CLT bounded}. It follows
		\begin{align*}
			\varphi\left( \left \vert (z-S_n)^{-1}\right \vert^2 \right) \leq \frac{\vert G_{S_n}(z) \vert}{\Im z} \leq \frac{1 + \pi C_0}{\Im z}, \qquad \Im z \geq L_{S, 3n}.
		\end{align*}
		
		Now, let us analyze the difference of $G_n$ and $G_{S_n}$ evaluated at some $z \in A_v$. Since $\varphi$ is tracial, we get 
		\begin{align} \label{bounded_expansion G_n}
			\begin{split}
				G_n(z)& = \varphi\left(\left( z - U_n \right)^{-1} \right) = \varphi \left(V_n^{\nicefrac{1}{2}} (zV_n -S_n)^{-1} V_n^{\nicefrac{1}{2}} \right) \\ & =\varphi\left(V_n (zV_n -S_n)^{-1} \right) \! = \! \varphi\left( \left(V_n - 1\right) (zV_n -S_n)^{-1} \right) + \varphi \left((zV_n -S_n)^{-1} \right)
			\end{split}
		\end{align}
	for all $z \in \mathbb{C}^+$, which leads to
		\begin{align*} 
			G_n(z) - G_{S_n}(z)  = \varphi\left( \left(V_n - 1\right)U_z^{-1} (z-S_n)^{-1} \right) + \varphi \left( (zV_n - S_n)^{-1} - (z -S_n)^{-1}\right)
		\end{align*}
		for all $z \in A_v.$
		Combining our preparatory work with the Cauchy-Schwarz inequality for $\varphi$ and using $\vert \varphi(x) \vert \leq \Vert x \Vert$ holding for all $x \in \mathcal{A}$, the first summand on the right-hand side in the equation above admits the estimate 
		\begin{align*}
			\left \vert \varphi\left( (V_n -1)U_z^{-1}(z - S_n)^{-1}\right) \right \vert & \leq  \varphi\left( (V_n -1)U_z^{-1} (U_z^{-1})^*(V_n -1) \right)^{\nicefrac{1}{2}} \varphi\left( \left \vert (z-  S_n)^{-1} \right \vert^2\right) ^{\nicefrac{1}{2}}  \\ &\leq
			\left \Vert U_z^{-1} \right \Vert  \left\Vert V_n-1 \right\Vert  \varphi\left( \left \vert (z-  S_n)^{-1} \right \vert^2\right) ^{\nicefrac{1}{2}} \leq \frac{2 \sqrt{1+\pi C_0} }{\sqrt{\Im z}}\left\Vert V_n-1 \right\Vert
		\end{align*}
		for any $z \in A_v$. Similarly, we deduce 
		\begin{align*}
			\left \vert \varphi \left((zV_n -S_n)^{-1} - (z -S_n)^{-1}\right) \right \vert  & =  \left \vert \varphi \left( (zV_n - S_n)^{-1}z(1-V_n)(z- S_n)^{-1}\right) \right \vert \\  & \leq  \vert z \vert \left \Vert V_n  - 1 \right\Vert \varphi \left(\left \vert \left(zV_n - S_n  \right)^{-1}\right\vert^2\right)^{\nicefrac{1}{2}} \varphi\left( \left \vert (z- S_n)^{-1} \right \vert^2 \right)^{\nicefrac{1}{2}} \\
			& \leq  2\vert z \vert \left \Vert V_n -1 \right \Vert  \varphi\left(\left\vert (z-S_n)^{-1} \right\vert^2 \right)  \leq  \frac{2(1+ \pi C_0)\vert z \vert}{\Im z} \left \Vert V_n -1 \right \Vert  
		\end{align*}
		for all $z$ as above. Hence, we arrive at
		\begin{align*} 
			\left \vert G_n(z) - G_{S_n}(z) \right \vert \leq \left(\frac{\sqrt{1 + \pi C_0}}{\sqrt{\Im z}} + \frac{(1 +  \pi C_0)\vert z \vert}{\Im z} \right)8L_{S,4n}^{\nicefrac{1}{2}}  , \qquad z \in A_v.
		\end{align*}
		Integration yields
		\begin{align*}
			\int_{-8}^2  \vert G_n(u+i)  - G_{S_n}(u+i) \vert  du &\leq   \left( 10\sqrt{1 + \pi C_0}+  (1+\pi C_0) \int_{-8}^{2}\sqrt{1+u^2}  du \right) 8L_{S,4n}^{\nicefrac{1}{2}} \\ & \leq   \left( 80\sqrt{1+ \pi C_0} + 293(1+\pi C_0)\right) L_{S,4n}^{\nicefrac{1}{2}}
		\end{align*}
		and
		\begin{align*}
			\int_{v}^{1}   \vert G_n(x+iy) - G_{S_n}(x+iy) \vert dy& \leq \left( \int_{v}^{1} \frac{\sqrt{1 + \pi C_0}}{\sqrt{y}} dy  + (1+\pi C_0)\int_v^1\frac{\vert x + iy \vert}{y} dy \right)8L_{S,4n}^{\nicefrac{1}{2}}   \\
			& \leq  \left(16 \sqrt{1 + \pi C_0}+ 8(1+\pi C_0) + 16(1+\pi C_0)\vert \! \log v\vert\right) L_{S,4n}^{\nicefrac{1}{2}}
		\end{align*}
		for any $x \in [-2+\nicefrac{\varepsilon}{2}, 2-\nicefrac{\varepsilon}{2}]$.

		It remains to handle the integral contributions from the difference of $G_{S_n}$ and $G_\omega$. 
		We have
		\begin{align*}
			\int_{v}^{1}   \vert G_{S_n}(x+iy) - G_\omega(x+iy) \vert dy \leq \int_{v}^{1}  \frac{\pi C_0 L_{S,3n} }{y}dy \leq \pi C_0 L_{S, 3n} \vert \!\log v \vert
		\end{align*}
		for $x$ as above and 
		\begin{align*}
			\int_{-8}^2  \vert  G_{S_n}(u+i) - G_\omega(u+i) \vert du \leq 10\pi C_0L_{S,3n}.
		\end{align*}
		Using \eqref{bounded - Bai intermediate}, we conclude
		\begin{align} \label{bounded non id intermediate for remark}
			C_{\gamma, \kappa}^{-1}\Delta\left(\mu_n, \omega \right)  \leq \frac{16v}{\pi} + \gamma \varepsilon^{\nicefrac{3}{2}} + \pi C_0 L_{S, 3n} \left( 10 + \vert \!\log v \vert\right)
			+ L_{S, 4n}^{\nicefrac{1}{2}} \left(C_1+ C_2\vert \!\log v \vert \right)
		\end{align}
		for suitably chosen constants $C_1, C_2>0$.
		
		Observing that $v < \nicefrac{1}{e}$ holds, we obtain
		\begin{align*}
			\max \left\{ L_{S, 3n}, L_{S, 4n}^{\nicefrac{1}{2}} \right\}  & \leq 	\max \left\{ L_{S, 3n}, L_{S, 4n}^{\nicefrac{1}{2}} \right\} \vert \! \log v \vert \\ & \leq 	\max\left\{ L_{S, 3n},L_{S, 4n}^{\nicefrac{1}{2}}\right\} \left \vert \log\left( \max \left\{L_{S, 3n}, L_{S, 4n}^{\nicefrac{1}{2}} \right\} \right) \right\vert \\ & \leq \max\left\{\vert \!\log L_{S, 3n} \vert L_{S, 3n}, \vert \!\log L_{S, 4n}\vert L_{S, 4n}^{\nicefrac{1}{2}}\right\}.
		\end{align*}
		Together with \eqref{bounded non id intermediate for remark}, this implies
		\begin{align*}
			\Delta\left(\mu_n, \omega \right)  &\leq C_3\max\left\{\vert \!\log L_{S, 3n} \vert L_{S, 3n}, \vert \!\log L_{S, 4n}\vert L_{S, 4n}^{\nicefrac{1}{2}}\right\}
		\end{align*}
		for some constant $C_3>0.$ 
		
		Recall that the inequality above only holds under the stronger condition $\smash{L_{S, 4n} < \nicefrac{1}{(160e)^2}}$. Let us now consider the remaining case, i.e.\@ we are given  $\smash{\nicefrac{1}{16} > L_{S, 4n} \geq \nicefrac{1}{(160e)^2}}$ and $L_{S, 3n} < \nicefrac{1}{2e}$. Due to $\smash{L_{S, 4n} \leq L_{S, 3n}^{\nicefrac{4}{3}} \leq L_{S, 3n}}$, we obtain
		$\smash{L_{S, 3n} \geq \nicefrac{1}{(160e)^2}}$, which leads to
		\begin{align*}
			\max\left\{\vert \!\log L_{S, 3n} \vert L_{S, 3n}, \vert \!\log L_{S, 4n}\vert L_{S, 4n}^{\nicefrac{1}{2}}\right\} \geq  \frac{ \max\{ \vert \! \log L_{S, 3n} \vert, \vert \! \log L_{S, 4n} \vert \}}{(160e)^2} \geq  \frac{ \vert \! \log L_{S, 4n} \vert}{(160e)^2} > 
			\frac{2}{(160e)^2}.
		\end{align*}
		In particular, we arrive at
		\begin{align*}
			\Delta(\mu_n, \omega) \leq 1 \leq \frac{(160e)^2}{2}\max\left\{\vert \!\log L_{S, 3n} \vert L_{S, 3n}, \vert \!\log L_{S, 4n}\vert L_{S, 4n}^{\nicefrac{1}{2}}\right\}.
		\end{align*}	
		Hence, by setting $C := \max\{ C_3, \nicefrac{(160e)^2}{2}\}$, the claim concerning the rate of convergence follows in all cases.
		
		Let us end by verifying the last two claims of \cref{bounded_main_non_id}. Assume that $\lim_{n \rightarrow \infty} L_{S, 4n} = 0 $ holds. Then, the free Lindeberg CLT is applicable and we get $\lim_{n \rightarrow \infty} G_{S_n}(z) = G_\omega(z)$ for all $z \in \mathbb{C}^+$. Combining \eqref{bounded_expansion G_n} with the inequality
		\begin{align*}
			\Vert (zV_n - S_n)^{-1} \Vert \leq \frac{40}{39} \frac{1}{\Im z}, \qquad z \in \mathbb{C}^+,
		\end{align*}
		it follows
		\begin{align*}
			\vert G_n(z) - G_{S_n}(z) \vert \leq \frac{40}{39} \Vert V_n -1 \Vert \left( \frac{1}{\Im z}+  \frac{\vert z \vert}{(\Im z)^2} \right)  \leq \frac{160}{39} \left( \frac{1}{\Im z}+  \frac{\vert z \vert}{(\Im z)^2} \right)  L_{S, 4n}^{\nicefrac{1}{2}}
		\end{align*}
		for all $z \in \mathbb{C}^+$. Thus, we conclude $\lim_{n \rightarrow \infty} G_{n}(z) = G_\omega(z)$ for all $z \in \mathbb{C}^+$ leading to $\mu_n \Rightarrow \omega$ as $n \rightarrow \infty$. Due to $\supp \mu_n \subset [-3,3]$ holding for sufficiently large $n$ and  \cite[Theorem 25.12]{Billingsley2012}, the claimed convergence of moments follows from the just-proven weak convergence. 
	\end{proof}
	
	Before we continue with proving the remaining claims on bounded self-normalized sums, let us briefly comment on the assumptions made in \cref{bounded_main_non_id} as well as on possible variations of our proof.
	\begin{rem} \label{bounded comment on proof of main thm non id}
		\begin{enumerate}[(i)]
			\item In our proof, the assumption $L_{S, 4n}<\nicefrac{1}{16}$ is necessary in order to guarantee that $U_n$ is a well-defined random variable in $\mathcal{A}$, whereas the condition $L_{S,3n} < \nicefrac{1}{2e}$ is made mainly for convenience. In fact, it is possible to obtain the same rate of convergence (up to constants) as in \cref{bounded_main_non_id} under the assumptions $1 > c_0 > L_{S, 3n}$ for some $c_0 \in (0,1)$  and $L_{S, 4n} < \nicefrac{1}{16}$ by replacing the constant $C$ appearing in the last-named theorem by $C' := \max\{C, 2e(\vert \!\log c_0 \vert)^{-1} \}$.
			\item The rate of convergence established in \cref{bounded_main_non_id} depends on $L_{S,3n}$ due to the fact that the speed of convergence in the free CLT is of order $L_{S,3n}$; compare to \cref{Berry Esseen CLT bounded}. Since \cref{Berry Esseen CLT bounded} additionally provides an alternative rate in the free CLT of order $\smash{L_{3n}^{\nicefrac{1}{2}}}$, one can also bound $\Delta(\mu_n, \omega)$ in terms of the quantities  $\smash{L_{S, 4n}^{\nicefrac{1}{2}}}$ and  $\smash{L_{3n}^{\nicefrac{1}{2}}}$ -- up to some logarithmic factors and under suitable conditions on both fractions. 
		\end{enumerate}	
	\end{rem}
	
	Let us now study the support of $\mu_n$. According to the calculations in \eqref{bounded_support ungleichung_Sn} and \eqref{bounded_support ungleichung}, we already know that 
	\begin{align*}
		\supp \mu_n \subset \left[ -\frac{40}{39}\left(2+ \frac{\max_{i \in [n]} \Vert X_i \Vert}{B_n}\right), \frac{40}{39}\left(2+ \frac{\max_{i \in [n]} \Vert X_i \Vert}{B_n}\right)\right]
	\end{align*}
	holds, whenever $L_{S, 4n}$ is sufficiently small. A modification of those calculations leads to the result in \cref{bounded_main_super}.
	\begin{proof}[Proof of \cref{bounded_main_super}] In the setting introduced at the beginning of this section, assume that $L_{S, 4n} < \nicefrac{1}{64}$ holds. In view of \cref{Lemma Lin} and the calculations made in the proof of \cref{bounded_Vn is invertible}, we have 
		\begin{align*}
			\left \Vert V_n^{\nicefrac{1}{2}} - 1 \right \Vert \leq \Vert V_n - 1 \Vert \leq 4L_{S, 4n}^{\nicefrac{1}{2}} < \frac{1}{2}, \qquad \left \Vert V_n^{-\nicefrac{1}{2}} -1 \right \Vert < 2\left  \Vert V_n^{\nicefrac{1}{2}} -1 \right \Vert \leq 8L_{S, 4n}^{\nicefrac{1}{2}}.
		\end{align*}
		Together with \eqref{bounded_support ungleichung_Sn}, it follows
		\begin{align*}
			\Vert U_n \Vert &\leq \left\Vert \left(V_n^{-\nicefrac{1}{2}} -1 \right)S_n\left(V_n^{-\nicefrac{1}{2}} -1\right) \right\Vert + \left\Vert \left(V_n^{-\nicefrac{1}{2}} -1 \right)S_n \right\Vert + \left \Vert S_nV_n^{-\nicefrac{1}{2}} \right\Vert \\ & \leq \left \Vert V_n^{-\nicefrac{1}{2}} -1 \right\Vert^2 \Vert S_n \Vert + 2 \left \Vert V_n^{-\nicefrac{1}{2}} - 1 \right \Vert \Vert S_n \Vert + \Vert S_n \Vert \\ & < \left(2+L_{S,4n}^{\nicefrac{1}{4}}\right)\left(64L_{S, 4n} + 16L_{S, 4n}^{\nicefrac{1}{2}}\right) + 2 + \frac{\max_{i \in [n]} \Vert X_i \Vert}{B_n} \\ &\leq 2 + \frac{\max_{i \in [n]} \Vert X_i \Vert}{B_n}  
			+57L_{S, 4n}^{\nicefrac{1}{2}}.
		\end{align*}
	\end{proof}
	
	Lastly, we prove \cref{bounded_main_id} dealing with the special case of identical distributions. 
	\begin{proof}[Proof of \cref{bounded_main_id}]
		Assume that the sequence $(X_i)_{i \in\mathbb{N}}$ fixed at the beginning of this section is a sequence of free identically distributed self-adjoint random variables with $\varphi(X_1) = 0$ and $\varphi(X_1^2) = 1$. 
		Then, it follows $\smash{B_n^2 = n >0,  L_{S, 3n} = \Vert X_1 \Vert^3n^{-\nicefrac{1}{2}}}$, and $L_{S, 4n} = \Vert X_1 \Vert ^4n^{-1}.$ From \cref{bounded_main_non_id}, we know that the self-normalized sum $U_n$ is well-defined in $\mathcal{A}$ for $n > 16\Vert X_1 \Vert ^4$ and that its analytic distribution $\mu_n$ satisfies 
		\begin{align*}
			\Delta(\mu_n, \omega) \leq 2C_0\Vert X_1 \Vert^3 \frac{\log n}{\sqrt{n}}
		\end{align*}
		for $n > \max\{16\Vert X_1 \Vert^4, 4e^2\Vert X_1 \Vert^6\}$ with $C_0>0$ being the constant in \cref{bounded_main_non_id}. In the case that $16\Vert X_1 \Vert^4 < n \leq 4e^2\Vert X_1 \Vert^6$ holds, we have $n^{-\nicefrac{1}{2}}\log n > (2e\Vert X_1 \Vert^3)^{-1}$, 
		which leads to 
		\begin{align*}
			\Delta(\mu_n, \omega) \leq 1 \leq 2e\Vert X_1 \Vert^3 \frac{\log n}{\sqrt{n}}. 
		\end{align*}
		Letting $C := 2\max\{ C_0, e\}$, we arrive at $
		\Delta(\mu_n, \omega) \leq  C\Vert X_1 \Vert^3(\log n) n^{-\nicefrac{1}{2}}$ for $n > 16\Vert X_1 \Vert^4$ as claimed.  The weak convergence and the moment convergence of $\mu_n$ to $\omega$ as $n \rightarrow \infty$ both follow from \cref{bounded_main_non_id}, whereas the statement concerning the support of $\mu_n$ is immediate from
		\cref{bounded_main_super}. 
	\end{proof}

	In \cref{SNS sec: Introduction free case}, we indicated that the use of Cauchy transforms is not the only option to substitute Slutsky's theorem in the intuitive approach to self-normalized sums. As we will see in the following remark, one can use a convergence result for \textit{rational expressions} in strongly convergent bounded random variables in order to prove the convergence of moments (and thus the weak convergence) stated in \cref{bounded_main_non_id}. We refer to  \cite{Collins2022, Yin2018} for the definition of non-commutative rational expressions.
	
	\begin{rem} \label{SNS bounded Remark Yin} 
		We work in the setting fixed at the beginning of this section. Assuming $\lim_{n \rightarrow \infty} L_{S,4n} = 0$ and making use of \cite[Theorem 1.2]{Yin2018}, our goal is to show that the self-normalized sum $U_n$ satisfies 
	\begin{align*}
	\lim_{n \rightarrow \infty} \varphi ( U_n^k ) = \varphi (s^k), \qquad k \in \mathbb{N},
	\end{align*}
	where $s \in \mathcal{A}$ is a standard semicircular element.
		
		Note that we find $n_0 \in \mathbb{N}$ in such a way that $L_{S, 4n}< \nicefrac{1}{64}$ holds for all $n \geq n_0.$ Then, by \cref{bounded_Vn is invertible}, the random variable $\smash{V_n^2}$ is invertible in $\mathcal{A}$, implying that $U_n$ is well-defined for those $n$. For any $k \in \mathbb{N}$, define the rational expression $r_k$ by $\smash{r_k(x,y) := (xy^{-1})^k}$ and observe that we have $\smash{\varphi(U_n^k)  = \varphi(r_k(S_n, V_n))}$ for any $n \geq n_0$  (since $\varphi$ is tracial) as well as $\smash{r_k(s,1) = s^k \in \mathcal{A}}$. If we assume that $(S_n, V_n)$ strongly converges to $(s,1)$ as $n \rightarrow \infty$, then we can apply \cite[Theorem 1.2]{Yin2018} to $r_k$ leading to 
		\begin{align*}
			\lim_{n \rightarrow \infty}\varphi(U_n^k)  = \lim_{n \rightarrow \infty}\varphi(r_k(S_n, V_n)) = \varphi(r_k(s,1)) = \varphi(s^k)
		\end{align*}
		for all $k \in\mathbb{N}$ as claimed. 
		
		It remains to verify that $(S_n, V_n)$ strongly converges to $(s,1)$ as $n \rightarrow \infty$. For this, we have to prove 
		\begin{align*} 
			\begin{split} 
				&\lim_{n \rightarrow \infty} \varphi ( P(S_n, V_n)) = \varphi( P(s,1)),  \qquad \lim_{n \rightarrow \infty} \Vert P(S_n, V_n) \Vert = \Vert P(s,1) \Vert
			\end{split}
		\end{align*}
		for any polynomial $P$ in two non-commuting formal variables. Let us begin with the convergence in $\varphi.$ Due to linearity, it suffices to establish convergence of all mixed moments, i.e.
		\begin{align*}
			\lim_{n \rightarrow \infty} \varphi ( S_n^{p_1}V_n^{q_1} S_n^{p_2}V_n^{q_2} \cdots S_n^{p_m} V_n^{q_m}) = \varphi( s^{p_1 + p_2 + \cdots + p_m})
		\end{align*}
		for all choices of $p_1, q_1, p_2, q_2, \dots, p_m, q_m, m \in \mathbb{N}$. Below, we restrict to $m=3$; the general case can be dealt with in complete analogy. Fix $p_1, q_1, p_2, q_2, p_3, q_3 \in \mathbb{N}$. With the help of \cref{bounded lemma voiculescu support,bounded_Vn is invertible}, we get 
		\begin{align*}
			\Vert S_n \Vert \leq 2+ L_{S, 4n}^{\nicefrac{1}{4}} < 3, \qquad \Vert V_n \Vert \leq 1+ \Vert V_n -1 \Vert \leq 1+ 4L_{S,4n}^{\nicefrac{1}{2}} < 2
		\end{align*}
		for $n \geq n_0$. Moreover, note that we have
		\begin{align*}
			\Vert V_n^{q_j} - 1 \Vert = \left\Vert (V_n - 1) \sum_{i=1}^{q_j} V_n^{i-1} \right\Vert \leq \Vert V_n - 1 \Vert \sum_{i=1}^{q_j} \Vert V_n \Vert^{i-1} < 4L_{S, 4n}^{\nicefrac{1}{2}}
			\sum_{i=1}^{q_j} 2^{i-1} \rightarrow 0
		\end{align*}
		as $n \rightarrow \infty$ for any $j \in [3]$. Together with 
		\begin{align*}
			\begin{split}
				S_n^{p_1}V_n^{q_1} S_n^{p_2}V_n^{q_2}S_n^{p_3} V_n^{q_3} &= S_n^{p_1}(V_n^{q_1} - 1)S_n^{p_2} V_n^{q_2} S_n^{p_3} V_n^{q_3} + S_n^{p_1+p_2}V_n^{q_2}S_n^{p_3}V_n^{q_3}  \\ &= S_n^{p_1}(V_n^{q_1} - 1)S_n^{p_2} V_n^{q_2} S_n^{p_3} V_n^{q_3} + S_n^{p_1+p_2}(V_n^{q_2}-1)S_n^{p_3}V_n^{q_3} + S_n^{p_1+p_2+p_3}V_n^{q_3} \\ & =S_n^{p_1}(V_n^{q_1} - 1)S_n^{p_2} V_n^{q_2} S_n^{p_3} V_n^{q_3} + S_n^{p_1+p_2}(V_n^{q_2}-1)S_n^{p_3}V_n^{q_3} \\ &  \qquad + S_n^{p_1+p_2+p_3}(V_n^{q_3}-1) + S_n^{p_1+p_2+p_3},
			\end{split}
		\end{align*}
		it follows
		\begin{align*}
			& \big \vert \varphi ( S_n^{p_1}V_n^{q_1} S_n^{p_2}V_n^{q_2}S_n^{p_3} V_n^{q_3}) - \varphi(S_n^{p_1+p_2+p_3}) \big \vert \\ & \qquad \leq  \Vert S_n \Vert^{p_1 + p_2 + p_3} \left( \Vert V_n^{q_1} -1 \Vert \Vert V_n \Vert^{q_2 + q_3} + \Vert V_n^{q_2}-1 \Vert \Vert V_n \Vert^{q_3} + \Vert V_n^{q_3} -1 \Vert  \right)\\ & \qquad \leq 3^{p_1+p_2+p_3} \left(2^{q_2 + q_3}\Vert V_n^{q_1} -1 \Vert + 2^{q_3}\Vert V_n^{q_2}-1 \Vert + \Vert V_n^{q_3} -1 \Vert  \right) \rightarrow 0
		\end{align*}
		as $n \rightarrow \infty$. By the free Lindeberg CLT, which is applicable due to $\lim_{n \rightarrow \infty} L_{S, 4n} = 0$, we know that $\mu_{S_n} \Rightarrow \omega$ holds as $n \rightarrow \infty$. Together with $\supp \mu_{S_n} \subset [-3,3]$ for all $n \geq n_0$, we arrive at
		\begin{align*}
			\lim_{n \rightarrow \infty} \varphi(S_n^{p_1 + p_2 + p_3}) = \varphi(s^{p_1 + p_2 + p_3}).
		\end{align*}
		  Combining the last observations, we obtain 
		  \begin{align*}
		  	\lim_{n \rightarrow \infty} \varphi(	S_n^{p_1}V_n^{q_1} S_n^{p_2}V_n^{q_2}S_n^{p_3} V_n^{q_3}) = \varphi(s^{p_1 + p_2 + p_3})
		  \end{align*}
		  as claimed.
		
		In order to verify the requested norm convergence, let $P$ be a polynomial in two non-commuting formal variables. Making use of \cite[Proposition 2.1]{Collins2014}, 
		we get
		$\lim_{n \rightarrow \infty} \Vert P(S_n, 1) \Vert = \Vert P(s,1) \Vert$. Hence, the claimed convergence follows if we can prove $\lim_{n \rightarrow \infty} \left  \Vert P(S_n, V_n) -P(S_n, 1) \right \Vert = 0.$
		Here, it suffices to show 
		\begin{align*}
			\lim_{n \rightarrow \infty} \Vert S_n^{p_1}V_n^{q_1} S_n^{p_2}V_n^{q_2} \cdots S_n^{p_m} V_n^{q_m} -S_n^{p_1+p_2 + \cdots + p_m} \Vert = 0 
		\end{align*}
		for all choices of $p_1, q_1, p_2, q_2, \dots, p_m, q_m, m \in \mathbb{N}$, which can be done easily by using the previous calculations. For instance, in the case $m =3$, we have
		\begin{align*}
			&\left \Vert S_n^{p_1}V_n^{q_1} S_n^{p_2}V_n^{q_2}S_n^{p_3} V_n^{q_3} - S_n^{p_1+p_2+p_3}  \right \Vert \\ & \qquad \leq  3^{p_1+p_2+p_3} \left(2^{q_2 + q_3}\Vert V_n^{q_1} -1 \Vert + 2^{q_3}\Vert V_n^{q_2}-1 \Vert + \Vert V_n^{q_3} -1 \Vert  \right) \rightarrow 0
		\end{align*}
		as $n \rightarrow \infty.$
	\end{rem}
		
	
	\subsection{\texorpdfstring{Example: Self-normalized sums of independent GUE matrices}{Example: Self-normalized sums of independent GUE matrices}} \label{SNS section bounded example}
	The goal of this section is to show how some of the arguments provided in the previous section, especially those belonging to the intuitive approach to self-normalized sums, can be used to derive a self-normalized limit theorem in the context of independent GUE matrices.
	
	Before we formulate the exact statement, let us briefly recall some aspects from random matrix theory.  For $N \in \mathbb{N}$, let $M_N(\mathbb{C})$ denote the $C^*$-algebra of matrices of dimension $N \times N$ over $\mathbb{C}$. For any matrix $X \in M_N(\mathbb{C})$, the normalized trace of $X$ is denoted by  $\tr_N(X)$, whereas $\Vert X\Vert_N$ is the operator norm given by the largest singular value of $X$. A self-adjoint random matrix $X = N^{-\nicefrac{1}{2}}(X_{ij})_{i, j \in [N]}$ over some probability space $(\Omega, \mathcal{F}, \mathbb{P})$ is a \textit{GUE matrix} if its entries satisfy the following conditions: Each $X_{ij}$ is a standard Gaussian random variable on $(\Omega, \mathcal{F}, \mathbb{P})$, which is complex for $i \neq j$ and real for $i=j$, and $\{X_{ij}: i \leq j\}$ are independent. Recall that Wigner's semicircle law applies to GUE matrices and that tuples of independent GUE matrices of dimension $N \times N$ almost surely strongly converge to tuples of free standard semicircular random variables in some $C^*$-probability space as $N \rightarrow \infty$. We refer to \cite{Mingo2017} for more details on the last-mentioned convergence results. 
	
	Having introduced the necessary notation, we can formulate our result on self-normalized sums of independent GUE matrices. 
	\begin{prop} \label{example - main theorem}
		For all choices of  $n, N \in \mathbb{N}$, let $\smash{X_{n,N} = \big(X^{(N)}_1, \dots, X^{(N)}_n\big)}$ be an $n$-tuple of independent GUE matrices of dimension $N \times N$ over some probability space $(\Omega, \mathcal{F}, \mathbb{P})$. Define
		\begin{align*}
			S_{n,N} := \sum_{i=1}^{n} X^{(N)}_i, \qquad V_{n,N}^2  :=  \sum_{i=1}^{n}\big( X^{(N)}_i\big)^2.
		\end{align*} 
		Then, there exists $N_0 \in \mathbb{N}$ such that for $\mathbb{P}$-a.e.\@ $\omega \in \Omega$ the self-normalized sum $U_{n,N}(\omega)$ given by 
		\begin{align*}
			U_{n,N}(\omega) := \left(V_{n,N}^{2}(\omega)\right)^{-\nicefrac{1}{4}}S_{n,N}(\omega) \left(V_{n,N}^{2}(\omega)\right)^{-\nicefrac{1}{4}}
		\end{align*}
		is well-defined in $M_N(\mathbb{C})$ for all $N \geq N_0$, $n \in \mathbb{N}$. 
		Moreover, for $\mathbb{P}$-a.e.\@ $\omega \in \Omega $ and all $z \in \mathbb{C}^+$, we have
		\begin{align*}
			\lim_{n\rightarrow \infty}  \limsup_{N \rightarrow \infty} \left\vert \tr_N\left( (z- U_{n,N}(\omega))^{-1} \right) - G_\omega(z) \right\vert = 0, 
		\end{align*}
	where $G_\omega(z)$ denotes the Cauchy transform of Wigner's semicircle law $\omega$ (not to be confused with the element $\omega \in \Omega$) evaluated at $z \in \mathbb{C}^+$.
	\end{prop}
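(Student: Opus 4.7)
The plan is to combine the almost sure strong convergence of $n$-tuples of independent GUE matrices to free standard semicircular families (see \cite{Mingo2017}) with the free self-normalized CLT of \cref{bounded_main_id} applied inside the limit $C^*$-algebra, following the same intuitive outline that produced \cref{bounded_main_non_id}: the outer limit $N \to \infty$ is handled by strong convergence, which plays the joint role of the free LLN on $V_{n,N}^2/n$ and the free CLT on $S_{n,N}$, and the subsequent limit $n \to \infty$ is then delivered by \cref{bounded_main_id}. To begin, I fix an event $\Omega_0 \subset \Omega$ of full probability on which, simultaneously for every $n \in \mathbb{N}$, the tuple $(X_1^{(N)}(\omega), \ldots, X_n^{(N)}(\omega))$ strongly converges as $N \to \infty$ to a free family $(s_1, \ldots, s_n)$ of standard semicircular elements in some $C^*$-probability space $(\mathcal{A}, \varphi)$, and on which every $X_i^{(N)}(\omega)$ is invertible (a GUE matrix has an absolutely continuous eigenvalue distribution, so it avoids $0$ almost surely). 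For $\omega \in \Omega_0$ the matrix $V_{n,N}^2(\omega) = \sum_{i=1}^n (X_i^{(N)}(\omega))^2$ is then positive definite for every $n, N \geq 1$, hence invertible, so $U_{n,N}(\omega)$ is well-defined in $M_N(\mathbb{C})$ and one may take $N_0 = 1$.

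Next I analyze the limit. Writing $S_n := s_1 + \cdots + s_n$ and $V_n^2 := s_1^2 + \cdots + s_n^2$ in $\mathcal{A}$, we have $\|s_i\|_{\mathcal{A}} = 2$ and $\varphi(s_i^2) = 1$, so the Lyapunov-type fraction for $(s_i)$ equals $L_{S,4n} = 16/n$. The estimate derived in the proof of \cref{bounded_Vn is invertible} then yields $\|V_n^2/n - 1\|_{\mathcal{A}} < 16/\sqrt{n}$, which is strictly less than $1$ for $n$ beyond an explicit $n_0$; for such $n$, $V_n^2$ is invertible in $\mathcal{A}$ and the self-normalized sum $U_n := (V_n^2)^{-1/4} S_n (V_n^2)^{-1/4} \in \mathcal{A}$ is well-defined. \cref{bounded_main_id}, applied to the free identically distributed family $(s_i)$, then gives that the analytic distribution $\mu_n$ of $U_n$ satisfies $G_{\mu_n}(z) \to G_\omega(z)$ for every $z \in \mathbb{C}^+$ as $n \to \infty$.

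It therefore suffices to prove, for each fixed $\omega \in \Omega_0$, $n \geq n_0$, and $z \in \mathbb{C}^+$, that $\tr_N((z - U_{n,N}(\omega))^{-1}) \to G_{\mu_n}(z)$ as $N \to \infty$. Strong convergence propagates through polynomials, giving $S_{n,N}(\omega) \to S_n$ and $V_{n,N}^2(\omega) \to V_n^2$ strongly, and because $V_n^2$ is invertible in $\mathcal{A}$, the norm-convergence half of strong convergence forces $\Sp(V_{n,N}^2(\omega))$ to be contained in a fixed compact subinterval of $(0, \infty)$ for all sufficiently large $N$. Uniform polynomial approximation of $x \mapsto x^{-1/4}$ on this interval then transfers strong convergence through the continuous functional calculus, so $(V_{n,N}^2(\omega))^{-1/4} \to (V_n^2)^{-1/4}$ strongly. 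Hence $U_{n,N}(\omega) \to U_n$ strongly, and in particular $\tr_N((z - U_{n,N}(\omega))^{-1}) \to \varphi((z - U_n)^{-1}) = G_{\mu_n}(z)$. Combining this with the previous paragraph yields $\limsup_{N \to \infty} |\tr_N((z - U_{n,N}(\omega))^{-1}) - G_\omega(z)| = |G_{\mu_n}(z) - G_\omega(z)| \to 0$ as $n \to \infty$, which closes the argument. The main technical obstacle is exactly this transfer from strong convergence of the positive operator $V_{n,N}^2$ to norm convergence of its fractional inverse: invertibility of $V_n^2$ in $\mathcal{A}$ is essential both to define $U_n$ and to keep the smallest eigenvalue of $V_{n,N}^2(\omega)$ bounded away from $0$ uniformly in $N$, and distributional convergence alone would not suffice.
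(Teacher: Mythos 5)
Your proposal is correct, but it takes a genuinely different route from the paper's proof. For well-definedness you only use that a GUE matrix is almost surely nonsingular (its law is absolutely continuous), so $V_{n,N}^2(\omega)$ is positive definite for all $n,N$ and you may take $N_0=1$; the paper instead invokes \cite[Theorem 18]{Collins2022} on rational expressions evaluated in absolutely continuous random matrices, which only yields invertibility for $N\geq N_0$. For the double limit, the paper never forms a limit operator $U_n$ in the semicircular algebra: it compares $\tr_N\big((z-U_{n,N})^{-1}\big)$ with $\tr_N\big((z-S_{n,N})^{-1}\big)$ by resolvent manipulations at the matrix level, controls $\Vert V_{n,N}^{\pm 2}-I_N\Vert_N$ as $N\to\infty$ via \cite[Corollary 1.3]{Yin2018} combined with an explicit free Poisson computation (the spectrum of $n^{-1}\sum_i s_i^2$ is $[(1-n^{-\nicefrac{1}{2}})^2,(1+n^{-\nicefrac{1}{2}})^2]$), and then uses that the normalized $S_{n,N}$ is itself a GUE matrix, so Wigner's law gives the inner limit $G_\omega(z)$ exactly with an error of order $n^{-\nicefrac{1}{2}}$ that vanishes as $n\to\infty$. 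You instead send $N\to\infty$ first, to $U_n=(V_n^2)^{-\nicefrac{1}{4}}S_n(V_n^2)^{-\nicefrac{1}{4}}$ built from the limiting free semicircular family, using spectral confinement and uniform polynomial approximation of $x\mapsto x^{-\nicefrac{1}{4}}$ on an interval bounded away from $0$, and then dispatch $n\to\infty$ by \cref{bounded_main_id}. Your route buys a more elementary well-definedness argument and a clean division of labor (strong convergence handles $N$, the paper's own free self-normalized CLT handles $n$); the paper's route avoids pushing strong convergence through non-polynomial functional calculus, needing only norm convergence of the rational expressions $V_{n,N}^{\pm2}-I_N$, and yields explicit quantitative control in $n$. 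Two steps you assert should be backed by a line of argument, though both are standard: that strong convergence forces $\Sp(V_{n,N}^2(\omega)/n)$ into any neighborhood of $\Sp(V_n^2/n)$ for all large $N$ (apply norm convergence to polynomials approximating a continuous bump function vanishing on the limit spectrum, cf.\@ \cite{Collins2014}), and that strong convergence of the self-adjoint, uniformly norm-bounded $U_{n,N}(\omega)$ to $U_n$ yields $\tr_N\big((z-U_{n,N}(\omega))^{-1}\big)\to G_{\mu_n}(z)$ (moment convergence together with uniformly bounded supports of the spectral distributions).
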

	
	Before we provide the proof of the proposition above, let us make a few comments: First, an application of the results established in \cref{SNS section bounded proofs} is not possible since the GUE matrices $\smash{X_1^{(N)}, \dots, X_n^{(N)}}$ are neither free nor elements in a $C^*$-probability space. Moreover, Yin's strong convergence result \cite[Theorem 1.2]{Yin2018} as used in \cref{SNS bounded Remark Yin} does not imply \cref{example - main theorem} either due to the consideration of the double limit. 
	Second, the sole purpose of \cref{example - main theorem} and its proof is to show how some of the ideas of the intuitive approach to (bounded) self-normalized sums are still applicable in the setting of independent GUE matrices. In particular, the proof of \cref{example - main theorem} is dominated by the free probabilistic approach to random matrices. 
	
	\begin{proof} [Proof of \cref{example - main theorem}]
	We proceed in three steps: In the first step, we verify that $U_{n,N}$ is $\mathbb{P}$-a.s.\@ well-defined for all $n$ and all sufficiently large $N$. 
		The second step, non-rigorously speaking, consists of showing that $\smash{V_{n,N}^2}$ and $\smash{V_{n,N}^{-2}}$ are $\mathbb{P}$-a.s.\@ close to the identity matrix $I_N \in M_N(\mathbb{C})$ in the operator norm $\Vert \cdot \Vert_N$ for large $N$ and $n$. 
		In the third step, we prove the claimed convergence by comparing $\smash{\tr_N((z- U_{n,N})^{-1})}$ to $\smash{\tr_N((z- S_{n,N})^{-1})}$ for any $z \in \mathbb{C}^+$ and by using Wigner's semicircle law for GUE matrices.
		
		Before we start with the first step, let us introduce some notation.
		As before, we agree on considering the normalized versions of $S_{n,N}$ and $V_{n,N}^2$, i.e.\@ from now on, let 
		\begin{align*}
			S_{n,N} = \frac{1}{\sqrt{n}}\sum_{i=1}^{n} X^{(N)}_i, \qquad V_{n,N}^2  = \frac{1}{n} \sum_{i=1}^{n} \big(X^{(N)}_i\big)^2.
		\end{align*} 
		Define the non-commutative polynomial $P_n$ in $n$ formal variables by $P_n(x_1, \dots, x_n) := \sum_{i=1}^{n} x_i^2$.	Moreover, set $r_n(x_1, \dots, x_n) := (P_n(x_1, \dots, x_n))^{-1}$ and observe that $r_n$ is a non-degenerate non-commutative rational expression. 
		Lastly, after passing over to a countable union of null sets if necessary, we find a null set $M_0 \subset \Omega$ such that for all $\omega \not \in M_0$ and all $N,n \in \mathbb{N}$, $i \in [n]$, we have 
		\begin{align*}
			\Big(X_i^{(N)}(\omega) \Big)^* = X_i^{(N)}(\omega) \in M_N(\mathbb{C}). 
		\end{align*}
		
		Now, we begin with the first step. Clearly, it suffices to show that $\smash{V_{n,N}^2}$ is $\mathbb{P}$-a.s.\@ invertible for all $n$ and sufficiently large $N$. For this purpose, we use \cite[Theorem 18]{Collins2022} dealing with evaluations of certain rational expressions in random matrices that are distributed according to an absolutely continuous probability measure. Let us go into detail here: Fix $n \in \mathbb{N}$ and $i \in [n]$. Applying the last-cited theorem to the rational expression $r_1$ and the GUE matrix $\smash{X_i^{(N)}}$, we find $N_0 \in \mathbb{N}$ and a null set $M_1 \subset \Omega$ -- both independent of $i$ and $n$ --  such that for all $\omega \not \in M_1$ and all $N \geq N_0$ the matrix
		\begin{align*}
		 r_1 \left(X_i^{(N)}(\omega)\right)  = 	\left(X_i^{(N)}(\omega) \right)^{-2} 
		\end{align*}
		exists in $M_N(\mathbb{C})$. In particular, it follows that $\big(X_i^{(N)}(\omega)\big)^2$ is positive definite for all  $\omega \not \in M_0 \cup M_1$ and $N, n,$ and $i$ as above.  
	Together with
		\begin{align*}
			P_n (X_{n,N}(\omega))    = \sum_{i=1}^{n}	P_1\left(X_i^{(N)}(\omega)\right),
			\end{align*}
		we deduce that $V_{n,N}^2(\omega) = n^{-1}P_n(X_{n,N}(\omega))$ is positive definite and thus invertible for all $\omega \not \in M_0 \cup M_1$, $N \geq N_0$, and $n \in \mathbb{N}$. 
		For completeness, let us remark that it is possible to apply \cite[Theorem 18]{Collins2022} to $r_n$ and $X_{n,N}$ directly, i.e.\@ the detour via $r_1$ can be avoided. However, by this  approach, it is not immediate that the lower bound on the matrix dimension $N$ is independent of $n$.
		
		Before we continue with the second step, let us do some preparatory work. For this, fix $n \in \mathbb{N}$ and let $(s_1, \dots, s_n)$ be an $n$-tuple of free standard semicircular random variables in some $C^*$-probability space $(\mathcal{A}, \varphi)$ with faithful tracial  functional $\varphi$, unit $1=1_{\mathcal{A}}$, and norm $\Vert \cdot \Vert_{\mathcal{A}}$. Making use of some information on \textit{free Poisson elements} taken from \cite[Propositions 12.11 and 12.13]{Nica2006}, our goal is to analyze the random variables $n^{-1}P_n(s_1, \dots, s_n)$ and $nr_n(s_1, \dots, s_n)$. Note that $nr_n(s_1, \dots, s_n)$ a priori does not need to exist in $\mathcal{A}$. Recall that $s_i^2$ is a free Poisson element of rate and jump size both equal to $1$ for any $i \in [n]$. In particular, we have $\kappa_m(s_i^2) =1$ for all $m \in \mathbb{N}, i \in [n]$, where  $\kappa_m(\cdot)$ denotes the \textit{$m$-th free cumulant} as defined in \cite[Definition 11.3]{Nica2006}. By freeness of $s_1^2, \dots, s_n^2$ and vanishing of mixed cumulants, it follows
		\begin{align*}
			\kappa_m\left(  \frac{P_n(s_1, \dots, s_n)}{n}\right) = \kappa_m \left( \frac{1}{n} \sum_{i=1}^n s _i^2 \right) = n \left( \frac{1}{n} \right)^m =  \frac{1}{n^{m-1}}
		\end{align*}
		for all $m \in \mathbb{N}$ implying that $n^{-1}P_n(s_1, \dots, s_n)$ is a free Poisson element with rate $n$ and jump size $n^{-1}$. Since $n^{-1}P_n(s_1, \dots, s_n)$ is normal and $\varphi$ is faithful, we get
		\begin{align*}
			\Sp\left( \frac{P_n(s_1, \dots, s_n)}{n} \right) = \left[ 1- \frac{2}{\sqrt{n}} + \frac{1}{n}, 1+ \frac{2}{\sqrt{n}} + \frac{1}{n}\right] \subset [0, \infty)
		\end{align*}
		leading to
		\begin{align} \label{SNS RMT Bsp bound P -1}
			\left \Vert \frac{P_n(s_1, \dots, s_n)}{n} - 1 \right \Vert_{\mathcal{A}} = \sup\left\{ \vert z-1 \vert: z \in 	\Sp\left( \frac{P_n(s_1, \dots, s_n)}{n} \right) \right\} = \frac{2}{\sqrt{n}} + \frac{1}{n}.
		\end{align}
		Under the additional assumption $n \geq 20$, we obtain $ \Vert n^{-1}P_n(s_1, \dots, s_n) - 1 \Vert_{\mathcal{A}} \leq \nicefrac{1}{2}$. Then, \cref{Lemma Lin} yields that $n^{-1}P_n(s_1, \dots, s_n)$ is invertible in $\mathcal{A}$ with inverse $nr_n(s_1, \dots, s_n) \in \mathcal{A}$ satisfying
		\begin{align} \label{SNS RMT Bsp bound r -1}
			\left\Vert n r_n(s_1, \dots, s_n) -1 \right\Vert_{\mathcal{A}} \leq \frac{ \left\Vert n^{-1}P_n(s_1, \dots, s_n) - 1 \right\Vert_{\mathcal{A}}}{1 - \left \Vert n^{-1}P_n(s_1, \dots, s_n) - 1 \right \Vert_{\mathcal{A}}} \leq	\frac{4}{\sqrt{n}} + \frac{2}{n}
		\end{align}
		for all $n \geq 20.$
		
		Now, we turn to the second step of the proof. Since $nr_n(s_1, \dots, s_n)$ is defined in $\mathcal{A}$, we can apply \cite[Corollary 1.3]{Yin2018} to the rational expressions $nr_n(x_1, \dots, x_n) -1$ and $n^{-1}P_n(x_1, \dots, x_n) -1$ for any $n \in \mathbb{N}$. Consequently, we find a null set $M_2 \subset \Omega$ -- again after passing over to a countable union of null sets if necessary -- such that for all $\omega \not \in \cup_{i=0}^2 M_i$ and all $ N \geq N_0$, $n \in \mathbb{N}$,  we have
		\begin{align} \label{SNS RMT Bsp Yin}
			\begin{split}
				\lim_{N \rightarrow \infty} \big\Vert V_{n,N}^{-2}(\omega) - I_N \big\Vert_N & =	 \left\Vert nr_n(s_1, \dots, s_n) -1 \right \Vert_{\mathcal{A}}, \\ 
				\lim_{N \rightarrow \infty} \big\Vert V_{n,N}^{2}(\omega) - I_N \big\Vert_N &= \left\Vert \frac{P_n(s_1, \dots, s_n)}{n} - 1 \right\Vert_{\mathcal{A}}.
			\end{split}
		\end{align}
		
		Finally, let us continue with the third step. Here, we argue realization-wise. Thus, for the following calculations, fix $\smash{\omega \not \in \cup_{i=0}^2 M_i}$ as well as  $N \geq N_0$, $n \in \mathbb{N}$, and $z \in \mathbb{C}^+.$ Because of $\omega \not \in M_0$, the realizations $\smash{X_1^{(N)}(\omega), \dots,X_n^{(N)}(\omega) }$ are self-adjoint random variables in the $C^*$-probability space $\smash{(M_N(\mathbb{C}), \tr_N)}$. In particular,  we can reuse some of the ideas presented in the proof of \cref{bounded_main_non_id}.
		
		We already know that $\smash{\Vert (z-U_{n,N}(\omega))^{-1} \Vert_N \leq (\Im z)^{-1}}$ and  $\smash{\Vert (z-S_{n,N}(\omega))^{-1} \Vert_N \leq (\Im z)^{-1}}$ hold. Note that $zV_{n,N}(\omega) - S_{n,N}(\omega)$ is invertible in $M_N(\mathbb{C})$ with inverse 
		\begin{align*}
			\left(zV_{n,N}(\omega) - S_{n,N}(\omega)\right)^{-1} = V_{n,N}^{-\nicefrac{1}{2}}(\omega)(z-U_{n,N}(\omega))^{-1} V_{n,N}^{-\nicefrac{1}{2}}(\omega).
		\end{align*}
		Thus, we obtain 
		\begin{align*}
			\left\Vert \left(zV_{n,N}(\omega) - S_{n,N}(\omega)\right)^{-1} \right\Vert_N \leq \frac{  \left \Vert V_{n,N}^{-1}(\omega) \right\Vert_N}{\Im z}.
		\end{align*}
		Since $\smash{V_{n,N}^{-1}(\omega)}$ is positive definite, all eigenvalues of $\smash{V_{n,N}^{-1}(\omega) + I_N}$ are larger than 1, implying that the matrix $\smash{V_{n,N}^{-1}(\omega) + I_N}$ is invertible in $M_N(\mathbb{C})$ with 
		\begin{align*}
			\left \Vert ( V_{n,N}^{-1}(\omega) + I_N )^{-1}  \right\Vert_N  \leq 1.
		\end{align*}
		Together with $\smash{V_{n,N}^{-1}(\omega) - I_N = ( V_{n,N}^{-1}(\omega) + I_N )^{-1}(V_{n,N}^{-2}(\omega) - I_N)}$, it follows
		\begin{align*}
			\left\Vert 	V_{n,N}^{-1}(\omega) - I_N \right \Vert_N \leq \left \Vert \big( V_{n,N}^{-1}(\omega) + I_N \big)^{-1} \right \Vert_N \left\Vert  V_{n,N}^{-2}(\omega) - I_N \right \Vert_N \leq \left \Vert  V_{n,N}^{-2}(\omega) - I_N \right \Vert_N,
		\end{align*}
		which leads to
		\begin{align*}
			\left \Vert V_{n,N}^{-1} (\omega) \right \Vert_N \leq  1 + \left\Vert V_{n,N}^{-2}(\omega) - I_N \right\Vert_{N}.
		\end{align*}
		Moreover, by using the same arguments as above, we get
		\begin{align*}
			\left \Vert V_{n,N}(\omega) - I_N \right \Vert_N \leq \left \Vert \left( V_{n,N}(\omega) + I_N \right)^{-1} \right \Vert_N \left \Vert  V_{n,N}^2(\omega) - I_N \right \Vert_N \leq \left\Vert  V_{n,N}^2(\omega) - I_N \right \Vert_N.
		\end{align*}
		Now, we write
		\begin{align*}
			& \tr_N \left( \left(z - U_{n,N}(\omega)\right)^{-1} - \left(z -S_{n,N}(\omega)\right)^{-1} \right) \\ & \qquad =\tr_N \left( (V_{n,N}(\omega) - I_N)\left(zV_{n,N}(\omega) - S_{n,N}(\omega)\right)^{-1} \right)  \\& \qquad \qquad  + \tr_N\left( \left(zV_{n,N}(\omega) - S_{n,N}(\omega) \right)^{-1}- (z-S_{n,N}(\omega))^{-1} \right) 
			\\ & \qquad =\tr_N \left( (V_{n,N}(\omega) - I_N)\left(zV_{n,N}(\omega) - S_{n,N}(\omega)\right)^{-1} \right)  \\ & \qquad \qquad + \tr_N\left( \left(zV_{n,N}(\omega)- S_{n,N}(\omega) \right)^{-1}z(I_N - V_{n,N}(\omega)) (z-S_{n,N}(\omega))^{-1} \right).
		\end{align*}
		The first term in the last sum above admits the estimate 
		\begin{align*}
			\left \vert  \tr_N \left( (V_{n,N}(\omega) - I_N)\left(zV_{n,N}(\omega) - S_{n,N}(\omega)\right)^{-1} \right) \right \vert 
			& \leq  \big\Vert  V_{n,N}(\omega) - I_N \big \Vert_N \big \Vert \left(zV_{n,N}(\omega) - S_{n,N}(\omega)\right)^{-1}  \big \Vert_N \\ &\leq \frac{\big\Vert  V_{n,N}^2(\omega) - I_N \big \Vert_N \left(1+  \left\Vert V_{n,N}^{-2}(\omega) - I_N \right\Vert_{N}\right)}{\Im z},
		\end{align*}
		whereas the second term can be bounded by 
		\begin{align*}
			&\left \vert \tr_N\left( \left(zV_{n,N}(\omega)- S_{n,N}(\omega)\right)^{-1}z(I_N - V_{n,N}(\omega)) (z-S_{n,N}(\omega))^{-1} \right) \right \vert \\ & \,\,\, \leq \vert z \vert \left\Vert V_{n,N}(\omega) - I_N \right\Vert_N \left \Vert \left(zV_{n,N}(\omega) -S_{n,N}({\omega})\right)^{-1}\right \Vert_N  \left\Vert \left(z-S_{n,N}(\omega)\right)^{-1} \right\Vert_N \\ & \,\,\, \leq \frac{\vert z \vert}{(\Im z)^2} \left \Vert V_{n,N}^2(\omega) - I_N \right \Vert_N \left( 1 + \big\Vert V_{n,N}^{-2}(\omega) - I_N \big\Vert_N \right).
		\end{align*}
		It is immediate that the last two inequalities combined with \eqref{SNS RMT Bsp bound P -1}, \eqref{SNS RMT Bsp bound r -1}, and \eqref{SNS RMT Bsp Yin} imply 
		\begin{align*}
			\lim_{n \rightarrow \infty} \limsup_{N \rightarrow \infty} \left \vert  \tr_N \left( \left(z - U_{n,N}(\omega)\right)^{-1} - \left(z -S_{n,N}(\omega)\right)^{-1} \right)  \right \vert = 0.
		\end{align*}
		Since $S_{n,N}$ is a GUE matrix for any $n, N \in \mathbb{N}$, Wigner's semicircle law is applicable. Hence, there exists a null set $M_3 \subset \Omega$ such that for all $\omega \not \in M_3$, $z \in \mathbb{C}^+$, and $n \in \mathbb{N}$, we have 
		\begin{align*}
			\lim_{N \rightarrow \infty} \tr_N\left(\left(z-S_{n,N}(\omega)\right)^{-1}\right) = G_\omega(z).
		\end{align*}
	
		Finally, we conclude: There exists a null set $M \subset \Omega$, namely $M := \cup_{i=0}^3 M_i$, such that for all $\omega \not \in M$ and all $z \in \mathbb{C}^+$, we have
		\begin{align*}
			\lim_{n\rightarrow \infty}  \limsup_{N \rightarrow \infty} \left\vert \tr_N\left( (z- U_{n,N}(\omega))^{-1} \right) - G_\omega(z) \right\vert = 0.
		\end{align*}
	\end{proof}

	\section{Unbounded self-normalized sums: Proofs of Theorem \texorpdfstring{\ref{unbounded_main_BE}}{1.4} and Corollary \texorpdfstring{\ref{unbounded main id}}{1.5}} \label{SNS sec unbounded}
	This section is devoted to the analysis of self-normalized sums of free self-adjoint unbounded operators that are affiliated with a von Neumann algebra. We start with proving \cref{unbounded_main_BE}, from which the claims in \cref{unbounded main id} dealing with the special case of identical distributions can be derived easily. 
	
	Throughout this section, we work in the following setting: Let $(\mathcal{A}, \varphi)$ be a $W^*$-probability space acting on some Hilbert space $\mathcal{H}$ with tracial functional $\varphi$. Recall that the identity operator in $B(\mathcal{H})$ is denoted by $1$, whereas the operator norm is given by $\Vert \cdot \Vert$. Fix a sequence $\smash{(X_i)_{i \in \mathbb{N}} \subset \mathcal{L}^2(\mathcal{A}) \subset \Aff(\mathcal{A})}$ of free self-adjoint random variables with $\varphi(X_i) = 0$ and $\varphi(X_i^2) = \sigma_i^2$ for all $i \in \mathbb{N}$. Let $B_n^2$ be defined as in \cref{bounded_main_non_id} and assume that  $B_n^2>0$ as well as Lindeberg's condition in \eqref{Lindeberg condition} are satisfied. In agreement with the notation introduced in Section \ref{SNS section bounded}, we consider the normalized versions of $S_n$ and $V_n^2$ as given in \eqref{bounded - normalized versions of S_n, V_n^2}. In the case of existence, $ \mu_n$ denotes the analytic distribution of the self-normalized sum $\smash{U_n = V_n^{-\nicefrac{1}{2}}S_n V_n^{-\nicefrac{1}{2}}}$ and $F_n$ and $G_n$ are its distribution function and Cauchy transform.  Lastly, $G_{S_n}$ denotes the Cauchy transform of the analytic distribution $\mu_{S_n}$ of $S_n.$  \\
	
	Before we start with some preparations for the proof of \cref{unbounded_main_BE}, let us briefly comment on the general idea. On a superficial level, we proceed similarly to the proof of \cref{bounded_main_non_id}: First, we show that $U_n$ is well-defined in $\Aff(\mathcal{A})$ for large $n$; compare to \cref{unbounded_V_n is invertible}. Second, in order to derive the claimed rate of convergence of $\mu_n$ to $\omega$, we argue by a version of Bai's inequality and bound the resulting differences between the Cauchy transforms $G_n$ and $G_\omega$ by making use of the closeness of $V_n^2$ to $1$ and the free CLT. Third, the convergence claim at the end of \cref{unbounded_main_BE} follows immediately from calculations done in the second step. As we will see in the course of this section, the explicit realizations of the first two steps differ strongly from the ones given in \cref{SNS section bounded}. \\
	
	We begin by showing that the self-normalized sum $U_n$ is well-defined in $\Aff(\mathcal{A})$ for sufficiently large $n$. For this, we have to verify that $V_n^2$ is invertible in $\Aff(\mathcal{A})$ for large $n$.
	The following lemma provides a sufficient condition for a self-adjoint operator in $\Aff(\mathcal{A})$ to be invertible in $\Aff(\mathcal{A})$. Due to its generality, the corresponding statement could also be included in \cref{SNS section preliminaries affiliated}. However, we chose to give the lemma here for better comparability of the proofs of the invertibility of $V_n^2$ in the bounded and unbounded case; compare to  \cref{SNS remark comparison invertibility}.
	
	\begin{lem} \label{unbounded - V_n^2 invertible lemma}
		Let $T \in \Aff(\mathcal{A})$ be self-adjoint with spectral measure $E_T$ satisfying $E_T(\{ 0\}) = 0$. Then, $T$ is invertible in $\Aff(\mathcal{A})$. 
		\begin{proof} 
			Due to $E_T(\{0\}) = 0$, the identity map is $E_T$-a.s.\@ non-zero on $\mathbb{R}$ and we can make use of \cite[Theorem 5.9]{Schmuedgen2012}. It follows that $T$ is invertible with $T^{-1} = f(T)$ for $f: \mathbb{R} \rightarrow \mathbb{R} \cup \{\infty\}$, $f(x) := \nicefrac{1}{x}$ for $x \neq 0$, $f(0):= \infty$. Since $f$ is $E_T$-a.s.\@ finite and due to $T \in \Aff(\mathcal{A})_{\text{sa}}$, we obtain $T^{-1} = f(T) \in \Aff(\mathcal{A})$; compare to \cref{SNS section preliminaries affiliated}. 
		\end{proof}
	\end{lem}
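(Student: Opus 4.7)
The plan is to construct the inverse of $T$ explicitly via the Borel functional calculus for self-adjoint operators. First I would define $f: \mathbb{R} \to \mathbb{R} \cup \{\infty\}$ by $f(x) := 1/x$ for $x \neq 0$ and $f(0) := \infty$. The hypothesis $E_T(\{0\}) = 0$ then translates precisely into the statement that $f$ is $E_T$-a.s.\ finite on $\mathbb{R}$, so that the spectral integral $f(T) = \int_{\mathbb{R}} f(\lambda)\, E_T(d\lambda)$ defines a closed densely-defined operator on $\mathcal{H}$ via the functional calculus for $T$.

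Next I would apply the property recalled in \cref{SNS section preliminaries affiliated}, which asserts that $g(T) \in \Aff(\mathcal{A})$ for every $E_T$-a.s.\ finite Borel function $g$ whenever $T \in \Aff(\mathcal{A})_{\text{sa}}$; applied to $f$, this immediately yields $f(T) \in \Aff(\mathcal{A})$. The two-sided inverse relations $T \cdot f(T) = 1 = f(T) \cdot T$ in $\Aff(\mathcal{A})$ then follow from the multiplicativity of the Borel functional calculus together with the fact that the pointwise product $\lambda \cdot f(\lambda)$ equals $1$ on $\mathbb{R} \setminus \{0\}$, hence $E_T$-a.s.\ on $\mathbb{R}$.

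The only subtle point is that the products must be interpreted as the strong multiplication that governs arithmetic in $\Aff(\mathcal{A})$ rather than the ordinary composition on the natural intersection of domains; however, since $\Aff(\mathcal{A})$ is a $*$-algebra under the closed extensions (as recalled in \cref{SNS section preliminaries affiliated}) and the ordinary product $T f(T)$ is already equal to the identity on a core, its closure is simply $1$. I do not anticipate any serious obstacle: the whole argument amounts to packaging the spectral theorem for self-adjoint operators together with the affiliation property of the functional calculus, both of which are available off the shelf.
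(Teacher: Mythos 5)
Your proposal is correct and follows essentially the same route as the paper: define $f(x)=1/x$ (with $f(0)=\infty$), note that $E_T(\{0\})=0$ makes $f$ an $E_T$-a.s.\ finite Borel function, and invoke the affiliation property from \cref{SNS section preliminaries affiliated} to get $f(T)\in\Aff(\mathcal{A})$. The only difference is that where you verify the inverse relations $Tf(T)=f(T)T=1$ by hand via multiplicativity of the functional calculus and passage to closures, the paper simply cites \cite[Theorem 5.9]{Schmuedgen2012} for the statement $T^{-1}=f(T)$; your handling of the strong-product subtlety is correct, so both arguments go through.
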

	
		Let us briefly add some remarks to the last lemma.
		
	\begin{rem} \label{unbounded bem invertibility lemma}
		\begin{enumerate}[(i)]
			\item For $T$ as in \cref{unbounded - V_n^2 invertible lemma}, the condition $E_T(\{ 0\}) = 0$ is equivalent to $\mu_T(\{0\}) = 0$ with $\mu_T$ being the analytic distribution of $T$. For self-adjoint $T$ (not necessarily in $\Aff(\mathcal{A})$), the equation $E_T(\{0\}) = 0$ is equivalent to $T$ being injective; compare to \cite[Proposition 5.10]{Schmuedgen2012}.
			\item As shown in  \cite[Lemma 2.6]{Reich1998}, any injective operator in $\Aff(\mathcal{A})$ is invertible in $\Aff(\mathcal{A})$. The proof of this statement is based on the polar decomposition of closed densely defined operators, functional calculus, and the faithfulness of $\varphi$. Tembo \cite[Lemma 3.2]{Tembo2009} proved that any injective normal operator in $\Aff(\mathcal{A})$ is invertible in $\Aff(\mathcal{A})$ without the use of functional calculus. 
		\end{enumerate}
	\end{rem}
	
	Later, we would like to apply \cref{unbounded - V_n^2 invertible lemma} to the sum $V_n^2$. In view of \cref{unbounded bem invertibility lemma}, it suffices to gather some information on possible atoms of the free additive convolution
	$\smash{\mu_{V_n^2} = \mu_{\nicefrac{X_1^2}{B_n^2}} \boxplus \cdots \boxplus\mu_{\nicefrac{X_n^2}{B_n^2}}}.$ For this purpose, we use the following well-known regularity result for free additive convolutions proven by Bercovici and Voiculescu \cite[Theorem 7.4]{Bercovici1998}.
	
	\begin{prop}[] \label{atom}
		Let $\nu_1$ and $\nu_2$ be probability measures on $\mathbb{R}$ and let $\gamma \in \mathbb{R}.$ The following are equivalent:
		\begin{enumerate}[(i)]
			\item $\gamma$ is an atom of $\nu_1 \boxplus \nu_2$.
			\item There exist atoms $\alpha, \beta \in \mathbb{R}$ of $\nu_1$, $\nu_2$, respectively, with $\gamma = \alpha + \beta$ and $\nu_1(\{ \alpha \}) + \nu_2(\{ \beta\})>1$.
		\end{enumerate}
		If $(ii)$ holds, then $(\nu_1 \boxplus \nu_2)(\{\gamma\}) = \nu_1(\{ \alpha \}) + \nu_2(\{ \beta\}) -1$.
	\end{prop}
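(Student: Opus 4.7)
The plan is to exploit the subordination theory for free additive convolution due to Voiculescu and Biane: there exist unique analytic self-maps $\omega_1, \omega_2 : \mathbb{C}^+ \to \mathbb{C}^+$ (the subordination functions) satisfying $G_{\nu_1 \boxplus \nu_2}(z) = G_{\nu_j}(\omega_j(z))$ for $j \in \{1,2\}$ together with the functional equation
\[
\omega_1(z) + \omega_2(z) - z = \frac{1}{G_{\nu_1 \boxplus \nu_2}(z)}, \qquad z \in \mathbb{C}^+.
\]
Combined with the standard atom-detection identity $\mu(\{\gamma\}) = \lim_{y \downarrow 0} iy\, G_\mu(\gamma + iy)$, this converts questions about atoms of $\nu_1 \boxplus \nu_2$ into questions about the vertical boundary behavior of Cauchy transforms and subordination functions at $\gamma$.

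For the implication (i)$\Rightarrow$(ii) together with the mass formula, I assume $r := (\nu_1 \boxplus \nu_2)(\{\gamma\}) > 0$. Then $G_{\nu_1 \boxplus \nu_2}(\gamma + iy) \sim -ir/y$ as $y \downarrow 0$, hence $1/G_{\nu_1 \boxplus \nu_2}(\gamma + iy) \sim iy/r \to 0$. The functional equation forces $\omega_1(\gamma + iy) + \omega_2(\gamma + iy) \to \gamma$, and since each $\omega_j$ is Herglotz with nonnegative imaginary part, both imaginary parts tend to zero. Julia--Carath\'eodory theory then yields that each $\omega_j(\gamma + iy)$ converges nontangentially to some $\alpha_j \in \mathbb{R}$ with a finite positive angular derivative $\delta_j$, so $\omega_j(\gamma + iy) = \alpha_j + i \delta_j y + o(y)$. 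Plugging this into $G_{\nu_j}(\omega_j(\gamma + iy)) = G_{\nu_1 \boxplus \nu_2}(\gamma + iy) \sim -ir/y$ shows that $G_{\nu_j}$ must blow up at $\alpha_j$, which is only possible if $\alpha_j$ is itself an atom of $\nu_j$; writing $r_j := \nu_j(\{\alpha_j\})$ and matching leading coefficients yields $\delta_j = r_j / r$. Comparing the $O(y)$-terms in the functional equation gives $\delta_1 + \delta_2 = 1 + 1/r$, which combined with $\delta_j = r_j/r$ delivers $r = r_1 + r_2 - 1$; in particular $r_1 + r_2 > 1$.

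For (ii)$\Rightarrow$(i), I would switch to an operator-theoretic construction. Realize free self-adjoint operators $X, Y$ affiliated with a tracial $W^*$-probability space $(\mathcal{M}, \tau)$ having distributions $\nu_1$ and $\nu_2$, and form the spectral projections $P := \mathbf{1}_{\{\alpha\}}(X)$ and $Q := \mathbf{1}_{\{\beta\}}(Y)$. These are free projections with $\tau(P) = p := \nu_1(\{\alpha\})$ and $\tau(Q) = q := \nu_2(\{\beta\})$. The classical formula for the trace of the meet of two free projections gives $\tau(P \wedge Q) = \max(0, p + q - 1)$, which is strictly positive by assumption. On the range of $P \wedge Q$, the operators $X$ and $Y$ reduce to the scalars $\alpha$ and $\beta$, so $X + Y$ equals $\alpha + \beta$ there, whence $\mathbf{1}_{\{\alpha + \beta\}}(X+Y) \geq P \wedge Q$ and $(\nu_1 \boxplus \nu_2)(\{\alpha + \beta\}) \geq p + q - 1$. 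The reverse inequality (and hence equality in the mass formula) follows from the forward direction already established.

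The main obstacle I anticipate lies in the boundary analysis of the subordination functions in the forward direction: showing that $\omega_j(\gamma + iy)$ converges radially (and not just along subsequences) to a real limit, and extracting the finite positive angular derivative $\delta_j$. This rests on Julia--Carath\'eodory theory applied to the Herglotz maps $\omega_j$, together with the uniform control on $y^{-1}\Im \omega_j(\gamma + iy)$ that is forced by the asymptotics of $1/G_{\nu_1 \boxplus \nu_2}$ near $\gamma$. Ruling out tangential or oscillatory approach of $\omega_j$ to the real line, and pinning down a single atom $\alpha_j$ rather than a cluster of candidate atoms, is the technical heart of the argument.
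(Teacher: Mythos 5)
The paper does not prove this proposition at all: it is imported verbatim from Bercovici and Voiculescu \cite[Theorem 7.4]{Bercovici1998}, so there is no in-paper argument to compare against. Your sketch is, however, essentially a correct reconstruction of the known proof. The (ii)$\Rightarrow$(i) direction via the free projections $P=\mathbf{1}_{\{\alpha\}}(X)$, $Q=\mathbf{1}_{\{\beta\}}(Y)$ and the formula $\tau(P\wedge Q)=\max(0,\tau(P)+\tau(Q)-1)$ is exactly the classical operator-theoretic argument, and it survives unbounded $X,Y$ because vectors in $\ran(P\wedge Q)$ are eigenvectors of both operators and hence lie in the domain of the strong sum. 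The (i)$\Rightarrow$(ii) direction via subordination, the identity $\omega_1(z)+\omega_2(z)=z+1/G_{\nu_1\boxplus\nu_2}(z)$, and Julia--Carath\'eodory theory is the standard analytic route, and the step you single out as the technical heart is in fact delivered directly by Julia--Carath\'eodory: the vertical estimate $\Im\omega_1(\gamma+iy)+\Im\omega_2(\gamma+iy)=y+\Im\big(1/G_{\nu_1\boxplus\nu_2}(\gamma+iy)\big)\sim(1+1/r)\,y$ together with $\Im\omega_j(z)\ge\Im z$ gives $\liminf_{z\to\gamma}\Im\omega_j(z)/\Im z<\infty$, which already guarantees a single nontangential limit $\alpha_j\in\mathbb{R}$ and a finite angular derivative $\delta_j\ge 1$; no separate argument against oscillatory or tangential approach, or against a ``cluster'' of candidate atoms, is needed, and since the approach $\omega_j(\gamma+iy)=\alpha_j+i\delta_j y+o(y)$ is nontangential one indeed gets $(\omega_j(\gamma+iy)-\alpha_j)\,G_{\nu_j}(\omega_j(\gamma+iy))\to\nu_j(\{\alpha_j\})$, hence $\delta_j=r_j/r$ and the mass formula.

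One small step you should make explicit: you deduce the reverse inequality in (ii)$\Rightarrow$(i) ``from the forward direction,'' but the forward direction only produces \emph{some} pair of atoms $(\alpha',\beta')$ with $\alpha'+\beta'=\gamma$ and mass sum exceeding $1$. To conclude that this pair is the given $(\alpha,\beta)$, note that at most one such pair can exist: two distinct pairs would give $\nu_1(\{\alpha\})+\nu_1(\{\alpha'\})+\nu_2(\{\beta\})+\nu_2(\{\beta'\})>2$, contradicting that each measure has total mass $1$. With that observation added, your outline is complete modulo the standard ingredients it quotes (general subordination for arbitrary probability measures, the Julia--Carath\'eodory theorem, and the trace formula for the meet of two free projections).
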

	
	Finally, we are able to prove that $V_n^2$ is invertible in $\Aff(\mathcal{A})$ for sufficiently large $n$. As a byproduct of our proof, we obtain that the same conclusion holds true for the random variable $S_n$. Let us note that the invertibility of $S_n$ in $\Aff(\mathcal{A})$ is not very surprising considering \cref{unbounded - V_n^2 invertible lemma}, \cref{unbounded bem invertibility lemma}, and the phenomenon of superconvergence in the free CLT.

	\begin{lem}\label{unbounded_V_n is invertible}
		There exists $n_0 \in \mathbb{N}$ such that $V_n^2, V_n, V_n^{\nicefrac{1}{2}}$, and $S_n$ are invertible in $\Aff(\mathcal{A})$ for all $n \geq n_0.$
		\begin{proof}
			Let us begin by proving that the analytic distribution $\mu_{S_n}$ has no atoms for sufficiently large $n.$
			It is well-known that Lindeberg's condition, which we assumed to hold at the beginning of this section, implies 
			\begin{align*}
				\lim_{n \rightarrow \infty}\frac{  \max_{i \in [n]} \sigma_i^2}{B_n^2} = 0.
			\end{align*}
			Chebyshev's inequality yields
			\begin{align*}
				\max_{i \in [n]} \mu_{\nicefrac{X_i}{B_n}}(\{ x: \vert x \vert \geq \varepsilon \}) \leq \varepsilon^{-2} \max_{i \in [n]} \varphi\left( \frac{X_i^2}{B_n^2}\right) = \varepsilon^{-2}  \frac{\max_{i \in [n]}\sigma_i^2}{B_n^2}  \rightarrow 0 
			\end{align*}
			as $n \rightarrow \infty$ for any $\varepsilon>0$, i.e.\@ the family $\smash{\{\mu_{\nicefrac{X_i}{B_n}}: n \in \mathbb{N}, i \in [n]\}}$ is an infinitesimal array of probability measures. Moreover, according to the free Lindeberg CLT, we have $\smash{\mu_{S_n} = \mu_{\nicefrac{X_1}{B_n}} \boxplus \dots \boxplus  \mu_{\nicefrac{X_n}{B_n}} \Rightarrow \omega}$ as $n \rightarrow \infty.$ The last two observations allow to apply the superconvergence result for free additive convolutions in \cite[Corollary 2.4]{Bercovici2022} implying that $\mu_{S_n}$ is absolutely continuous in a neighborhood of $[-1,1]$ for sufficiently large $n$, say $n \geq n_0.$ In particular, we get $\mu_{S_n}(\{0\}) = 0$ for those $n$. By \cref{unbounded - V_n^2 invertible lemma}, $S_n$ is invertible in $\Aff(\mathcal{A})$ for all $n \geq n_0$.
			
			In order to verify the claims concerning $V_n^2$, $V_n$, and $\smash{V_n^{\nicefrac{1}{2}}}$, we argue similarly: Assume that $0$ is an atom of $\mu_{V_n^2}$. Then, by induction on \cref{atom}, we find atoms $x_i \in \mathbb{R}$ of $\mu_{\nicefrac{X_i^2}{B_n^2}}$ such that 
			\begin{align*}
				0 = \sum_{i=1}^{n} x_i  \qquad \text{and} \qquad \sum_{i=1}^{n} \mu_{\nicefrac{X_i^2}{B_n^2}}(\{ x_i\}) > n-1 
			\end{align*}
			hold. 
			By positivity of $\nicefrac{X_i^2}{B_n^2}$ for all $i \in [n]$, we must have $x_i = 0$ for all $i \in [n]$. Making use of functional calculus, it follows $\smash{	0< \mu_{\nicefrac{X_i^2}{B_n^2}}(\{0 \}) =\mu_{\nicefrac{X_i}{B_n}}(\{0\})}$
			for all $ i \in [n]$.
			In particular, \cref{atom} -- this time in the reverse direction -- yields that $0$ has to be an atom of $\mu_{S_n}$, which is a contradiction whenever $n \geq n_0$ holds. Hence, we obtain $\smash{0 = \mu_{V_n^2}(\{0\}) =  \mu_{V_n}(\{0\})  =  \mu_{\smash{V_n^{\nicefrac{1}{2}}}}(\{0\})}$ for $n \geq n_0$. 
		\end{proof}
	\end{lem}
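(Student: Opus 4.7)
The plan is to apply Lemma~\ref{unbounded - V_n^2 invertible lemma} (combined with \cref{unbounded bem invertibility lemma}) to each of the four operators, reducing invertibility to showing that their analytic distributions have no atom at $0$. Since $V_n, V_n^{\nicefrac{1}{2}}$ are obtained from $V_n^2 \geq 0$ by continuous functions that vanish only at $0$, functional calculus gives $\mu_{V_n^2}(\{0\}) = \mu_{V_n}(\{0\}) = \mu_{V_n^{1/2}}(\{0\})$, so it suffices to prove $\mu_{S_n}(\{0\}) = 0$ and $\mu_{V_n^2}(\{0\}) = 0$ for all sufficiently large $n$.

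For $S_n$, I would invoke a superconvergence result for free additive convolutions. Lindeberg's condition~\eqref{Lindeberg condition} yields, via Chebyshev, that $\max_{i \in [n]} \sigma_i^2/B_n^2 \to 0$, so that the array $\{\mu_{X_i/B_n}\colon n \in \mathbb{N}, i \in [n]\}$ is infinitesimal; the free Lindeberg CLT additionally supplies $\mu_{S_n} \Rightarrow \omega$. Under these two ingredients, a known regularity theorem for free additive convolutions of infinitesimal arrays converging to the semicircle (of Bercovici--Voiculescu type) gives absolute continuity of $\mu_{S_n}$ in a neighborhood of the bulk $[-1,1]$ for all $n$ beyond some threshold $n_0$. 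In particular, $\mu_{S_n}(\{0\}) = 0$ for $n \geq n_0$, so Lemma~\ref{unbounded - V_n^2 invertible lemma} delivers the invertibility of $S_n$ in $\Aff(\mathcal{A})$.

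For $V_n^2$, I would argue by contradiction using Proposition~\ref{atom} applied to the free convolution $\mu_{V_n^2} = \mu_{X_1^2/B_n^2} \boxplus \cdots \boxplus \mu_{X_n^2/B_n^2}$. If $0$ were an atom of $\mu_{V_n^2}$ for some $n \geq n_0$, iterating Proposition~\ref{atom} would produce atoms $x_i$ of the factors with $\sum_{i=1}^n x_i = 0$ and $\sum_{i=1}^n \mu_{X_i^2/B_n^2}(\{x_i\}) > n-1$. Positivity of $X_i^2/B_n^2$ forces $x_i \geq 0$ and hence $x_i = 0$ for all $i$. Functional calculus gives $\mu_{X_i^2/B_n^2}(\{0\}) = \mu_{X_i/B_n}(\{0\})$, whence $\sum_{i=1}^n \mu_{X_i/B_n}(\{0\}) > n-1$. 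Applying Proposition~\ref{atom} in the forward direction to $\mu_{S_n} = \mu_{X_1/B_n} \boxplus \cdots \boxplus \mu_{X_n/B_n}$ then forces $0$ to be an atom of $\mu_{S_n}$, contradicting the first step.

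The main obstacle is the analytic input for $S_n$: without superconvergence one cannot rule out an atom at $0$ of $\mu_{S_n}$ in the fully non-i.d.\@ setting, since the Lindeberg CLT alone gives only weak convergence and is compatible with atoms persisting. The rest of the argument is essentially combinatorial, reducing the atom structure of the squared sum to that of $\mu_{S_n}$ via Proposition~\ref{atom} and the positivity of $X_i^2$.
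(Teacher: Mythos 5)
Your proposal is correct and follows essentially the same route as the paper: Lindeberg plus Chebyshev to get an infinitesimal array, the free Lindeberg CLT and the superconvergence result of Bercovici--Ho--Wang--Zhong to kill atoms of $\mu_{S_n}$ near $0$, and then an iterated application of \cref{atom} together with positivity and functional calculus to transfer the absence of an atom at $0$ from $\mu_{S_n}$ to $\mu_{V_n^2}$ (and hence to $\mu_{V_n}$, $\mu_{V_n^{\nicefrac{1}{2}}}$). No substantive differences to report.
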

	
	Let us briefly compare the proofs of the invertibility of $V_n^2$ in the bounded and unbounded case; compare to \cref{bounded_Vn is invertible,unbounded_V_n is invertible}.
	\begin{rem}  \label{SNS remark comparison invertibility}
		\begin{enumerate}[(i)]
			\item In the bounded case, i.e.\@ in \cref{bounded_Vn is invertible}, the invertibility of $V_n^2$ was derived with the help of a Neumann series type argument with respect to the norm defined on the underlying $C^*$-algebra combined with the fact that $V_n^2$ is close to $1$ in that norm. This approach fails in the unbounded case since the operator norm of $V_n^2$ may be infinite. Additionally, in the proof of \cref{unbounded_V_n is invertible}, we did not use the closeness of the unbounded operator $V_n^2$ to the identity $1.$
			\item The proof of the invertibility of $V_n^2$ in the unbounded case, compare to \cref{unbounded_V_n is invertible}, heavily relies on the fact that $V_n^2$ is a linear operator (and not just an abstract object in some $C^*$-algebra) allowing us to talk about injectivity and spectral measures. In particular, $V_n^2$ being invertible is not tied to the condition that $0$ is not contained in its spectrum and we can make sense of $V_n^{-2}$ as a linear operator whenever the premise of \cref{unbounded - V_n^2 invertible lemma} is satisfied, i.e.\@ whenever $V_n^2$ is injective.
			A statement similar to that in \cref{unbounded - V_n^2 invertible lemma} is not possible in the context of $C^*$-probability spaces, even in view of the GNS representation.
		\end{enumerate}
	\end{rem}
	
	In the following, assume that $n \geq n_0$ holds with $n_0$ taken from the preceding lemma.  Then, the self-normalized sum $U_n$ is well-defined in $\Aff(\mathcal{A}).$ Before we pass over to the complete proof of \cref{unbounded_main_BE}, let us collect some information on the integrability of $U_n.$ We start by recalling the following lemma taken from \cite[Lemma 2.1]{Bikchentaev2012}.
	
	\begin{lem} \label{Bikchentaev square sum smallelr sum of squares}
		Let $T_1, \dots, T_k \in \Aff(\mathcal{A})$ and $\lambda_1, \dots, \lambda_k >0$ with $\sum_{i=1}^{k} \lambda_i \leq 1$. Then, we have 
		\begin{align*}
			\left \vert \sum_{i=1}^k \lambda_i T_i\right \vert^2 \leq \sum_{i=1}^{k} \lambda_i\vert T_i \vert^2.
		\end{align*} 
	\end{lem}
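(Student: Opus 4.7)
The claim is an operator-weighted Cauchy--Schwarz inequality, so my plan is to reduce it to the standard scalar Cauchy--Schwarz inequality applied pointwise to vectors in the appropriate domains, and then translate the resulting inequality of quadratic forms into the inequality of positive operators via the characterization of the order on $\Aff(\mathcal{A})_+$ recalled in Section~\ref{SNS section preliminaries affiliated}.

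First I would reduce to the case $\sum_{i=1}^{k}\lambda_i = 1$. If $\lambda_0 := 1 - \sum_{i=1}^{k}\lambda_i \geq 0$, set $T_0 := 0 \in \Aff(\mathcal{A})$, so that $\lambda_0 |T_0|^2 = 0$ and the sums $\sum_{i=0}^{k}\lambda_i T_i$ and $\sum_{i=1}^{k}\lambda_i T_i$ coincide. If $\lambda_0 > 0$, the new weights sum to $1$; if $\lambda_0 = 0$, we are already in that case. Hence we may assume $\sum \lambda_i = 1$ from now on.

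Next, set $S := \sum_{i=1}^{k}\lambda_i T_i \in \Aff(\mathcal{A})$ and $R := \sum_{i=1}^{k}\lambda_i |T_i|^2 \in \Aff(\mathcal{A})_+$. By the characterization of the order recalled in Section~\ref{SNS section preliminaries affiliated}, proving $|S|^2 \leq R$ reduces to verifying $D(R^{\nicefrac{1}{2}}) \subset D(|S|) = D(S)$ together with $\Vert S x \Vert \leq \Vert R^{\nicefrac{1}{2}} x \Vert$ for every $x \in D(R^{\nicefrac{1}{2}})$. For $x \in D(R^{\nicefrac{1}{2}})$, the quantity $\sum_{i=1}^{k}\lambda_i \Vert T_i x \Vert^2 = \langle R x, x \rangle = \Vert R^{\nicefrac{1}{2}} x \Vert^2$ is finite, so each $x$ belongs to $D(T_i)$ and hence to $D(S)$ by the definition of strong sum. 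The usual Cauchy--Schwarz inequality in $\mathcal{H}$, applied with weights $\lambda_i$ summing to $1$, then yields
\begin{align*}
\Vert S x \Vert^2 \;=\; \left\Vert \sum_{i=1}^{k} \lambda_i T_i x \right\Vert^2 \;\leq\; \left( \sum_{i=1}^{k} \lambda_i \Vert T_i x \Vert \right)^2 \;\leq\; \sum_{i=1}^{k} \lambda_i \Vert T_i x \Vert^2 \;=\; \Vert R^{\nicefrac{1}{2}} x \Vert^2,
\end{align*}
which gives the desired inequality.

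The main obstacle is the bookkeeping with unbounded operators: the sums appearing in $S$ and $R$ are \emph{strong} sums (closures), so one must ensure that $D(R^{\nicefrac{1}{2}})$ is indeed a core of vectors on which each $T_i$ and hence $S$ acts in the usual pointwise way, and that no information is lost in passing from the bare pointwise inequality to the operator inequality. This is precisely what the characterization $A \leq B \iff D(B^{\nicefrac{1}{2}}) \subset D(A^{\nicefrac{1}{2}})$ and $\Vert A^{\nicefrac{1}{2}} x \Vert \leq \Vert B^{\nicefrac{1}{2}} x \Vert$ from \cite[Proposition 10.14]{Schmuedgen2012} handles cleanly, so once we check that $D(R^{\nicefrac{1}{2}}) \subset \bigcap_i D(T_i)$ the rest of the argument is purely scalar Cauchy--Schwarz.
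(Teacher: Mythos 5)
The paper does not prove this lemma at all: it is imported verbatim from \cite[Lemma 2.1]{Bikchentaev2012}, so there is no in-paper argument to compare with and your attempt has to stand on its own. Its skeleton — reduce to $\sum_i \lambda_i = 1$, apply the scalar Cauchy--Schwarz/convexity bound $\big\Vert \sum_i \lambda_i T_i x \big\Vert^2 \leq \sum_i \lambda_i \Vert T_i x \Vert^2$ vectorwise, and convert the resulting form inequality into the operator inequality via the characterization of $\leq$ on $\Aff(\mathcal{A})_+$ in terms of square roots — is the natural one. But the pivotal step is asserted circularly. You claim that for $x \in D(R^{\nicefrac{1}{2}})$ the quantity $\sum_i \lambda_i \Vert T_i x \Vert^2 = \langle Rx, x\rangle$ is finite, ``so each $x$ belongs to $D(T_i)$''. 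To write $\sum_i \lambda_i \Vert T_i x\Vert^2$ you must already know $x \in \bigcap_i D(T_i)$; to write $\langle Rx, x\rangle$ you need $x \in D(R)$, not merely $x \in D(R^{\nicefrac{1}{2}})$; and to identify $\langle Rx, x\rangle$ with $\sum_i \lambda_i \Vert T_i x\Vert^2$ you need $Rx = \sum_i \lambda_i \vert T_i \vert^2 x$ pointwise, which is only guaranteed on $\bigcap_i D(\vert T_i\vert^2)$, the domain of the operator sum \emph{before} taking the closure that defines the strong sum. In other words, the statement that the strong sum $R$ has form domain $\bigcap_i D(\vert T_i \vert)$ and quadratic form $\sum_i \lambda_i \Vert T_i x\Vert^2$ — i.e.\@ that the strong sum coincides with the form sum of the $\lambda_i \vert T_i\vert^2$ — is exactly the nontrivial content here, and your proof assumes it rather than proves it.

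The gap is fixable along the same lines, which is why I would call it a missing step rather than a wrong approach. Work first on $\mathcal{D} := \bigcap_i D(\vert T_i\vert^2) \subset \bigcap_i D(T_i)$, where all operators act pointwise: there the scalar inequality gives $\Vert Sx \Vert^2 \leq \sum_i \lambda_i \Vert T_i x\Vert^2 = \langle R_0 x, x\rangle = \Vert R^{\nicefrac{1}{2}} x \Vert^2$ with $R_0$ the unclosed sum. Then observe that $\mathcal{D}$ is a core for $R$ (this is how the strong sum is obtained, using that in the present finite von Neumann algebra setting the closure of $R_0$ is self-adjoint and affiliated), hence also a core for $R^{\nicefrac{1}{2}}$: if $x_m \in \mathcal{D}$ with $x_m \to x$ and $Rx_m \to Rx$, then $\Vert R^{\nicefrac{1}{2}}(x_m - x)\Vert^2 = \langle R(x_m - x), x_m - x\rangle \to 0$. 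Finally, for $x \in D(R^{\nicefrac{1}{2}})$ choose $x_m \in \mathcal{D}$ with $x_m \to x$ and $R^{\nicefrac{1}{2}} x_m \to R^{\nicefrac{1}{2}}x$; the estimate on $\mathcal{D}$ shows $(Sx_m)_m$ is Cauchy, and closedness of $S$ yields $x \in D(S)$ with $\Vert Sx\Vert \leq \Vert R^{\nicefrac{1}{2}}x\Vert$. Only now does the square-root characterization of the order from \cite[Proposition 10.14]{Schmuedgen2012} give $\vert S\vert^2 \leq R$. Alternatively, you may cite the fact that for positive operators affiliated with a finite von Neumann algebra the strong sum agrees with the form sum, but some such argument must appear; as written, the proposal begs the question at its only delicate point.
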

	
	Clearly, the last lemma implies $0 \leq \vert S_n \vert^2 \leq n V_n^2$. With the help of this inequality, we can prove that the self-normalized sum $U_n$ satisfies $\vert U_n \vert \leq \sqrt{n}$ and is contained in $\mathcal{L}^p(\mathcal{A})$ for all $0<p\leq \infty.$ Let us remark that these statements can be interpreted as a consequence of some kind of regularizing effect of the self-normalization. By Cauchy's inequality, a comparable regularizing effect is observable in the case of self-normalized sums in classical probability theory.
	\begin{lem} \label{unbounded Un smaller sqrt n}
		We have $U_n^2 \leq n$ and thus obtain $\vert U_n \vert \leq \sqrt{n}$. It follows $U_n \in \mathcal{A}$ and $\Vert U_n \Vert_p \leq \sqrt{n}$ for any $p\in (0, \infty]$.
		\begin{proof}
			From the proof of \cref{unbounded_V_n is invertible}, we get $\mu_{\vert S_n \vert}(\{0\})=0$ and thus conclude that $\vert S_n \vert$ is invertible in $\Aff(\mathcal{A})$. Using \cref{Bikchentaev square sum smallelr sum of squares} and some of the properties of the partial order $\smash{\leq}$ given in \cref{SNS section preliminaries affiliated}, we obtain $0 \leq \vert S_n \vert \leq \sqrt{n}V_n$, which yields $0 \leq V_n^{-1} \leq \sqrt{n} \vert S_n \vert^{-1}$. The functions $x \mapsto x \vert x \vert^{-1}x = x^2 \vert x \vert^{-1}$ with $0$ being sent to $0$ and $x \mapsto \vert x \vert$ ($E_{S_n}$-a.s.\@) agree on $\mathbb{R}$. 
			Hence, by functional calculus,  we deduce $S_n \vert S_n \vert^{-1} S_n = \vert S_n \vert$.
			Finally, it follows
			\begin{align*}
				\vert U_n \vert^2 = U_n^2 = U_n^* U_n &= V_n^{-\nicefrac{1}{2}} S_n V_n^{-1} S_n V_n^{-\nicefrac{1}{2}} = \left(S_n V_n^{-\nicefrac{1}{2}} \right)^*  V_n^{-1} \left(S_n V_n^{-\nicefrac{1}{2}}\right) \leq  \sqrt{n}V_n^{-\nicefrac{1}{2}}S_n \vert S_n \vert^{-1} S_nV_n^{-\nicefrac{1}{2}} \\ &=
				\sqrt{n}V_n^{-\nicefrac{1}{2}} \vert S_n \vert V_n^{-\nicefrac{1}{2}} \leq n,
			\end{align*}
			which implies $\vert U_n \vert \leq \sqrt{n}$. In particular, as stated in \cref{SNS section preliminaries affiliated}, we obtain $\mathcal{H} = D(\sqrt{n}) \subset D(\vert U_n \vert)$. Combining this with the closed graph theorem and the fact that $U_n$ is affiliated with $\mathcal{A}$, we get $U_n \in \mathcal{A}$.
			In order to prove the last claim, recall that we have 
			\begin{align*}
				\Vert U_n \Vert_p =	\left( \int_{0}^1 \mu_t(U_n)^p dt \right)^{\nicefrac{1}{p}} \in [0, \infty]
			\end{align*}
			for any $p \in (0, \infty).$
			Together with $\mu_t(U_n) = \mu_t(\vert U_n \vert) \leq \sqrt{n} \mu_t(1) \leq \sqrt{n}$ holding for all $t>0$, we arrive at $\Vert U_n \Vert_p \leq \sqrt{n}$ for $p$ as above. For $p= \infty$, the claimed inequality $\Vert U_n \Vert \leq \sqrt{n}$ is immediate from $\lim_{t \searrow 0} \mu_t( \vert U_n \vert) = \Vert U_n \Vert.$ 
		\end{proof}
	\end{lem}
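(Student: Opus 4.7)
The plan is to establish the chain of domination $|S_n| \leq \sqrt{n}\, V_n \iff V_n^{-1} \leq \sqrt{n}\, |S_n|^{-1}$ and then exploit it via two successive sandwiches. Applying \cref{Bikchentaev square sum smallelr sum of squares} with $T_i = X_i$ and uniform weights $\lambda_i = 1/n$ gives $\bigl|n^{-1}\sum_i X_i\bigr|^2 \leq n^{-1}\sum_i X_i^2$ in $\Aff(\mathcal{A})_+$, which rescales to $S_n^2 \leq n\, V_n^2$. Operator monotonicity of the square root on positive affiliated operators (recalled in \cref{SNS section preliminaries affiliated}) upgrades this to $|S_n| \leq \sqrt{n}\, V_n$. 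For $n \geq n_0$, both $V_n$ and $|S_n|$ are invertible in $\Aff(\mathcal{A})$ -- the former by \cref{unbounded_V_n is invertible}, the latter because the superconvergence argument in the proof of that lemma shows $\mu_{S_n}(\{0\}) = 0$, which via \cref{unbounded - V_n^2 invertible lemma} applied to $|S_n|$ ensures invertibility. Hence the order-reversing property of inversion yields $V_n^{-1} \leq \sqrt{n}\, |S_n|^{-1}$.

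Next I would expand the self-adjoint square
\[
U_n^2 = V_n^{-1/2} S_n V_n^{-1} S_n V_n^{-1/2} = (S_n V_n^{-1/2})^* V_n^{-1} (S_n V_n^{-1/2})
\]
and conjugate the inequality $V_n^{-1} \leq \sqrt{n}\, |S_n|^{-1}$ by $Y := S_n V_n^{-1/2}$ to obtain $U_n^2 \leq \sqrt{n}\, V_n^{-1/2} S_n |S_n|^{-1} S_n V_n^{-1/2}$. A short functional calculus computation on $S_n$ gives the identity $S_n |S_n|^{-1} S_n = |S_n|$, since the Borel functions $x \mapsto x\cdot|x|^{-1}\cdot x$ (with the convention $0 \mapsto 0$) and $x \mapsto |x|$ agree $E_{S_n}$-a.s.\ on $\mathbb{R}$. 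A second sandwich of the inequality $|S_n| \leq \sqrt{n}\, V_n$ by $V_n^{-1/2}$ then gives $U_n^2 \leq n\, V_n^{-1/2} V_n V_n^{-1/2} = n\cdot 1$, and monotonicity of the square root yields $|U_n| \leq \sqrt{n}$.

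From $|U_n| \leq \sqrt{n}\cdot 1$ the inclusion $\mathcal{H} = D(\sqrt{n}\cdot 1) \subset D(|U_n|)$ combined with the fact that $U_n$ is closed and affiliated with $\mathcal{A}$ forces $U_n \in \mathcal{A}$ by the closed graph theorem. For the $\mathcal{L}^p$-bound I would appeal to the generalized singular numbers from \cref{SNS section preliminaries nc integration theory}: the domination $|U_n| \leq \sqrt{n}\cdot 1$ implies $\mu_t(U_n) = \mu_t(|U_n|) \leq \sqrt{n}$ for all $t > 0$, whence $\Vert U_n \Vert_p^p = \int_0^1 \mu_t(U_n)^p\, dt \leq n^{p/2}$ for $p \in (0, \infty)$, and the case $p = \infty$ follows directly from $\Vert U_n \Vert = \lim_{t \searrow 0} \mu_t(|U_n|) \leq \sqrt{n}$.

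The main obstacle is conceptual rather than computational: making sure every conjugation, inversion and square-root step is legitimate in the unbounded setting of $\Aff(\mathcal{A})$ and preserves the partial order. The essential non-trivial input is the invertibility of $|S_n|$, which is not a priori obvious and is supplied by the superconvergence argument used for $V_n^2$ in \cref{unbounded_V_n is invertible}; everything else then reduces to the order-theoretic and functional-calculus properties collected in \cref{SNS section preliminaries affiliated} and \cref{SNS section preliminaries nc integration theory}.
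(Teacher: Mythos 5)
Your proposal is correct and follows essentially the same route as the paper's proof: the Bikchentaev-type bound $S_n^2\leq nV_n^2$ upgraded by operator monotonicity to $\vert S_n\vert\leq\sqrt{n}\,V_n$, inversion to get $V_n^{-1}\leq\sqrt{n}\,\vert S_n\vert^{-1}$ (using $\mu_{S_n}(\{0\})=0$ from the superconvergence argument), the two conjugation sandwiches together with the functional-calculus identity $S_n\vert S_n\vert^{-1}S_n=\vert S_n\vert$, and then the closed graph theorem and generalized singular numbers for the final claims. No substantive differences to report.
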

	
	According to the last lemma, we have $\Vert U_n \Vert_2 \leq \sqrt{n}$ whenever the underlying sequence of random variables $(X_i)_{i \in \mathbb{N}}$ is in $\mathcal{L}^2(\mathcal{A})$. The following lemma shows that under stronger moment assumptions, we obtain a new bound on $\Vert U_n \Vert_2$. This bound contains a summand of the form $ \sqrt{n}\Vert V_n^2 - 1 \Vert_2$, which -- in view of our expectation that $V_n^2$ is close to $1$ -- will turn out to be advantageous in the proof of \cref{unbounded_main_BE}.
	
	\begin{lem}\label{second moment SNS}
		Assume that $(X_i)_{i \in \mathbb{N}} \subset \mathcal{L}^4(\mathcal{A})$ holds. Then, we have
		\begin{align*}
			\left\Vert U_n \right \Vert_2  \leq  2 +3\sqrt{2n} \big\Vert V_n^2 - 1 \big\Vert_2  \qquad \text{and} \qquad 	\big\Vert S_n V_n^{-\nicefrac{1}{2}}\big \Vert_2 \leq \sqrt{2} + \sqrt{n} \left\Vert V_n^2 - 1 \right\Vert_2.
		\end{align*}
		\begin{proof}
			Let us start by analyzing the spectral projection $\smash{p= E_{V_n^2}\left((0, \nicefrac{1}{4}\right))}$. Since $V_n^2$ is affiliated with $\mathcal{A}$, we have $p \in \mathcal{A}$. Due to $\smash{E_{V_n^2}(\{0 \}) = E_{V_n}(\{0\}) = E_{V_n^{\nicefrac{1}{2}}}(\{0\}) = 0}$, we can write
			\begin{align*} 
				p = E_{V_n}\left((0, \nicefrac{1}{2} )\right) = E_{V_n^{-1}}\left((2,\infty )\right) = E_{V_n^{-\nicefrac{1}{2}}}\smash{\big(\big(\sqrt{2},\infty\big)\big)}=  E_{V_n^{-2}}\left((4,\infty)\right).
			\end{align*}
			Now, define $f: \mathbb{R} \rightarrow \mathbb{R}$ by $f(x) := \vert x -1 \vert$ and note that $f^{-1}((\alpha, \infty)) = (-\infty, 1-\alpha) \cup (\alpha +1, \infty)$
		holds for any $\alpha \geq 0$.
			Thus, it follows
			\begin{align*}
				E_{\vert V_n^2 - 1 \vert}\left((\alpha, \infty)\right) &= E_{f(V_n^2)}\left((\alpha, \infty)\right) = E_{ V_n^2}((-\infty, 1-\alpha) \cup (\alpha + 1, \infty))\\ & = E_{ V_n^2}((0, 1-\alpha)) + E_{ V_n^2}((\alpha + 1, \infty))
			\end{align*}
			for $\alpha \in (0,1).$
			Combining this equation for the specific choice $\alpha = \nicefrac{3}{4}$ with Chebyshev's inequality, we obtain
			\begin{align*}
				\varphi(p) & = \varphi\left( E_{V_n^2}\left((0, \nicefrac{1}{4} )\right) \right) \leq \varphi\left( E_{V_n^2}\left((0, \nicefrac{1}{4}) \right) \right) +\varphi\left( E_{V_n^2}\left( (\nicefrac{7}{4}, \infty )\right) \right) \\ &= \varphi\left(E_{\vert V_n^2 -1 \vert}((\nicefrac{3}{4}, \infty))\right) = \mu_{\vert V_n^2 -1 \vert}((\nicefrac{3}{4}, \infty))\leq 
				2\big \Vert V_n^2 - 1 \big\Vert_2^2.
			\end{align*}
			Note that we have $\Vert V_n^2 - 1 \Vert_2 < \infty$ due to $X_i \in \mathcal{L}^4(\mathcal{A})$ for all $i \in [n]$.
			Moreover, observe that we can write $V_np = g(V_n)$ for $g: \mathbb{R} \rightarrow \mathbb{R}$ given by $g(x) := x$ for $x \in (0, \nicefrac{1}{2})$ and $g(x) := 0$ for $x \not \in (0, \nicefrac{1}{2})$. Due to $0 \leq g \leq \nicefrac{1}{2}$ and $V_n \in \Aff(\mathcal{A})_{\text{sa}}$, we get $V_n p \in \mathcal{A}$ with $0 \leq V_n p \leq \nicefrac{1}{2}$. Similarly, it follows
			\begin{align*}
				(1-p)V_n^{-\nicefrac{1}{2}} = V_n^{-\nicefrac{1}{2}}(1-p) \in \mathcal{A}, \qquad	0 \leq V_n^{-\nicefrac{1}{2}}(1-p) =  V_n^{-\nicefrac{1}{2}}E_{V_n^{-\nicefrac{1}{2}}}\big( \big(0, \sqrt{2} \,\big]\big)  \leq \sqrt{2}.
			\end{align*}

			Let us proceed by proving the claim for $\Vert S_n V_n^{-\nicefrac{1}{2}} \Vert_2.$ Together with $0 \leq S_n^2 = \vert S_n \vert^2 \leq nV_n^{2}$ following from \cref{Bikchentaev square sum smallelr sum of squares}, we obtain 
			\begin{align*}
				\big\Vert S_n V_n^{-\nicefrac{1}{2}}p \big\Vert_2^2= \varphi\left(pV_n^{-\nicefrac{1}{2}}S_n^2V_n^{-\nicefrac{1}{2}}p\right) \leq \varphi(pnV_np) = n \Vert pV_np \Vert_1  \leq n  \Vert p \Vert_1 \Vert V_np \Vert
				\leq \frac{n}{2} \varphi(p)\leq n \big\Vert V_n^2 - 1 \big\Vert_2^2.
			\end{align*}
			For later reference, note that this implies 
			\begin{align*} 
				\big(S_nV_n^{-\nicefrac{1}{2}}p\big)^*	\big(S_nV_n^{-\nicefrac{1}{2}}p\big) = pV_n^{-\nicefrac{1}{2}}S_n^2V_n^{-\nicefrac{1}{2}}p \in \mathcal{L}^1(\mathcal{A}),
			\end{align*}
			which, by the tracial property of $\varphi$ as formulated in \cref{SNS section preliminaries nc integration theory}, leads to
			\begin{align*}
				\big	\Vert pV_n^{-\nicefrac{1}{2}}S_n \big \Vert_2^2& = \varphi\left( \big( pV_n^{-\nicefrac{1}{2}}S_n\big)^*\big(pV_n^{-\nicefrac{1}{2}}S_n \big) \right) = \varphi\left(\big(S_nV_n^{-\nicefrac{1}{2}}p\big) \big(S_nV_n^{-\nicefrac{1}{2}}p\big)^*\right)\\ &= \varphi\left( \big(S_nV_n^{-\nicefrac{1}{2}}p\big)^* \big(S_nV_n^{-\nicefrac{1}{2}}p\big)\right) = \big\Vert S_n V_n^{-\nicefrac{1}{2}}p \big\Vert_2^2 \leq n \big\Vert V_n^2 - 1 \big\Vert_2^2.
			\end{align*}
			By freeness, we have $\varphi(X_i X_j) = 0$ for $i \neq j$ and thus obtain $\Vert S_n \Vert_2 = 1.$
			Combining this with 
			\begin{align*}
				\big	\Vert S_n V_n^{-\nicefrac{1}{2}} (1-p) \big\Vert_2 \leq \Vert S_n \Vert_2 \big\Vert V_n^{-\nicefrac{1}{2}}(1-p) \big\Vert \leq \sqrt{2},
			\end{align*}
			we arrive at 
			\begin{align*}
				\big\Vert S_n V_n^{-\nicefrac{1}{2}}\big \Vert_2 \leq 	\big\Vert S_n V_n^{-\nicefrac{1}{2}}p \big\Vert_2 + \big\Vert S_n V_n^{-\nicefrac{1}{2}} (1-p) \big\Vert_2  \leq  \sqrt{n} \big\Vert V_n^2 - 1 \big\Vert_2 + \sqrt{2}
			\end{align*}
			as claimed.
			
			In order to prove the remaining inequality, we use $0 \leq p =p^2 \leq 1$ as well as \cref{unbounded Un smaller sqrt n} leading to 
			\begin{align*}
				\Vert pU_np \Vert_2^2 = \varphi\big(p U_n p^2 U_n p\big) \leq \varphi\big(p U_n^2 p\big) \leq n\varphi \big( p^2 \big) \leq 2n \big\Vert V_n^2 -1 \big\Vert_2^2.
			\end{align*} 
		Together with 
			\begin{align*}
				\Vert (1-p)U_n(1-p) \Vert_2 = \big\Vert (1-p)V_n^{-\nicefrac{1}{2}}S_nV_n^{-\nicefrac{1}{2}}(1-p) \big\Vert_2 \leq \big \Vert (1-p)V_n^{-\nicefrac{1}{2}} \big \Vert \big \Vert V_n^{-\nicefrac{1}{2}}(1-p) \big \Vert \Vert S_n \Vert_2 \leq 2
			\end{align*}
			and $U_n = pU_np  +  (1-p)U_n(1-p) + (1-p)U_np + pU_n(1-p) $, we conclude
			\begin{align*}
				\Vert U_n \Vert_2 &\leq \sqrt{2n} \big\Vert V_n^2 - 1 \big\Vert_2 + 2 + \big\Vert (1-p)V_n^{-\nicefrac{1}{2}}S_nV_n^{-\nicefrac{1}{2}}p \big\Vert_2 + \big\Vert pV_n^{-\nicefrac{1}{2}}S_nV_n^{-\nicefrac{1}{2}}(1-p) \big\Vert_2 \\ 	& \leq  \sqrt{2n} \big\Vert V_n^2 - 1 \big\Vert_2 + 2 +\big \Vert (1-p)V_n^{-\nicefrac{1}{2}} \big \Vert \big\Vert S_nV_n^{-\nicefrac{1}{2}}p \big\Vert_2 +  \big \Vert pV_n^{-\nicefrac{1}{2}}S_n \big\Vert_2 	\big \Vert V_n^{-\nicefrac{1}{2}}(1-p) \big \Vert 	\\ &\leq 2 +3\sqrt{2n} \big\Vert V_n^2 - 1 \big\Vert_2.
			\end{align*}
		\end{proof}
	\end{lem}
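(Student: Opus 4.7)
The plan is to localize the trouble via a spectral projection onto the region where $V_n^2$ is far from the identity, and then control each piece of a natural operator decomposition separately. I would set $p := E_{V_n^2}((0,\nicefrac{1}{4}))$; since $V_n^2 \in \Aff(\mathcal{A})_{\text{sa}}$, this is a genuine projection in $\mathcal{A}$. Rewriting the support condition as $\{V_n^2 \in (0,\nicefrac{1}{4})\} \subset \{|V_n^2-1| > \nicefrac{3}{4}\}$ and applying Chebyshev's inequality gives $\varphi(p) \leq \mu_{|V_n^2-1|}((\nicefrac{3}{4},\infty)) \leq 2\Vert V_n^2-1\Vert_2^2$. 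The same functional calculus delivers two bounded-operator estimates that I would use repeatedly: on the range of $p$ one has $V_n \leq \nicefrac{1}{2}$, hence $\Vert V_np\Vert \leq \nicefrac{1}{2}$; on the range of $1-p$ one has $V_n \geq \nicefrac{1}{2}$, hence $\Vert V_n^{-\nicefrac{1}{2}}(1-p)\Vert \leq \sqrt{2}$.

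For the first inequality of the lemma, I would split $S_n V_n^{-\nicefrac{1}{2}} = S_n V_n^{-\nicefrac{1}{2}} p + S_n V_n^{-\nicefrac{1}{2}}(1-p)$. The piece away from $p$ is bounded crudely by Hölder: $\Vert S_n V_n^{-\nicefrac{1}{2}}(1-p)\Vert_2 \leq \Vert S_n\Vert_2 \Vert V_n^{-\nicefrac{1}{2}}(1-p)\Vert \leq \sqrt{2}$, where $\Vert S_n\Vert_2 = 1$ follows from freeness, vanishing means, and the normalization $\sum_i \sigma_i^2/B_n^2 = 1$. For the $p$-piece I would invoke the Bikchentaev bound $S_n^2 \leq nV_n^2$ from \cref{Bikchentaev square sum smallelr sum of squares}, giving $\Vert S_n V_n^{-\nicefrac{1}{2}} p\Vert_2^2 = \varphi(p V_n^{-\nicefrac{1}{2}} S_n^2 V_n^{-\nicefrac{1}{2}} p) \leq n\varphi(pV_np) \leq n\Vert V_np\Vert \varphi(p) \leq n\Vert V_n^2-1\Vert_2^2$. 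Combining the two pieces yields the claimed bound.

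For $\Vert U_n\Vert_2$ I would use the four-term decomposition $U_n = pU_np + (1-p)U_n(1-p) + pU_n(1-p) + (1-p)U_np$. The diagonal term $pU_np$ is handled by the crude bound $U_n^2 \leq n$ from \cref{unbounded Un smaller sqrt n}: $\Vert pU_np\Vert_2^2 \leq n\varphi(p) \leq 2n\Vert V_n^2-1\Vert_2^2$. The term $(1-p)U_n(1-p) = ((1-p)V_n^{-\nicefrac{1}{2}}) S_n (V_n^{-\nicefrac{1}{2}}(1-p))$ is at most $\sqrt{2}\cdot 1\cdot\sqrt{2} = 2$. Each cross term, e.g.\ $pU_n(1-p) = (pV_n^{-\nicefrac{1}{2}} S_n)(V_n^{-\nicefrac{1}{2}}(1-p))$, reduces to bounding $\Vert pV_n^{-\nicefrac{1}{2}} S_n\Vert_2$; here I would use the tracial identity $\varphi(TT^*) = \varphi(T^*T)$ applied to $T = S_n V_n^{-\nicefrac{1}{2}} p$ to identify this with $\Vert S_n V_n^{-\nicefrac{1}{2}} p\Vert_2 \leq \sqrt{n}\Vert V_n^2-1\Vert_2$ from the first half. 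Each cross term therefore contributes at most $\sqrt{2n}\Vert V_n^2-1\Vert_2$, and summing the four pieces gives the claimed bound.

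The main technical obstacle will be justifying the tracial swap $\varphi(TT^*) = \varphi(T^*T)$ for the unbounded operator $T = S_n V_n^{-\nicefrac{1}{2}} p$, since the identity for $\varphi$ extended to $\mathcal{L}^1(\mathcal{A})$ requires $T^*T \in \mathcal{L}^1(\mathcal{A})$. Fortunately, the finite estimate $\Vert T\Vert_2^2 \leq n\Vert V_n^2-1\Vert_2^2 < \infty$ derived in the preceding step exactly supplies this integrability (recall $V_n^2 - 1 \in \mathcal{L}^2$ by the $\mathcal{L}^4$-hypothesis on the $X_i$), after which the identity recalled in \cref{SNS section preliminaries nc integration theory} applies. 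Apart from this single subtlety, the remaining steps amount to routine bookkeeping with spectral projections, functional calculus, and Hölder's inequality.
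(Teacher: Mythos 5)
Your proposal is correct and follows essentially the same route as the paper's proof: the same spectral projection $p = E_{V_n^2}((0,\nicefrac{1}{4}))$ with the Chebyshev bound $\varphi(p)\leq 2\Vert V_n^2-1\Vert_2^2$, the same use of \cref{Bikchentaev square sum smallelr sum of squares} and \cref{unbounded Un smaller sqrt n}, the same four-term decomposition of $U_n$, and the same justification of the tracial swap via $T^*T\in\mathcal{L}^1(\mathcal{A})$. No gaps; the only quibble is that you call the $\Vert S_nV_n^{-\nicefrac{1}{2}}\Vert_2$ estimate the ``first'' inequality, though it is stated second in the lemma.
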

	Finally, let us see how the previous lemmata help to prove \cref{unbounded_main_BE}.  
	\begin{proof}[Proof of \cref{unbounded_main_BE}]
		Let $(X_i)_{i \in \mathbb{N}}$ be given as in \cref{unbounded_main_BE}. Assume that $n \geq n_0$ holds with $n_0$ taken from \cref{unbounded_V_n is invertible}. Then, by the same lemma, the self-normalized sum $U_n$ is well-defined in $\Aff(\mathcal{A})$ proving the first claim of the theorem.
		
		In order to derive the claimed rate of convergence in terms of the Kolmogorov distance, let us -- for the moment -- additionally assume that $L_{4n} < \nicefrac{1}{16}$ holds. We will use Bai's inequality from \cref{Bai Goetze more general endl. Intgrenzen} with the choices
		\begin{align*} 
			A = 3B, \qquad	B = L_{4n}^{-\nicefrac{1}{4}} > 2, \qquad a=2,  \qquad \gamma > 0.7, \qquad	v \in (0,1), \qquad \varepsilon = 6v \in (0,2).
		\end{align*}
		The parameter $v$ will be specified in the course of this proof. Note that the integrability of $\vert F_{n} - F_\omega \vert$ is ensured by \cref{unbounded Un smaller sqrt n} and \cref{Bai Bem Wigner}. Moreover, observe that the above-made choices guarantee that the constants $\kappa$ and $C_{\gamma, \kappa}$ from \cref{Bai Goetze more general endl. Intgrenzen} do not depend on $n$.
		
		It remains to establish upper bounds for the integrals appearing on the right-hand side of the inequality in \cref{Bai Goetze more general endl. Intgrenzen}. Since the resulting bounds partly depend on $\Vert V_n^2 -1 \Vert_2$, let us note that the freeness of $X_1^2, \dots, X_n^2$ implies
		\begin{align} \label{LLN unbounded rate of convergence example}
			\big\Vert V_n^2 - 1\big\Vert_2^2
			= \frac{1}{B_n^4}\varphi\left( \left( \sum_{i=1}^{n} X_i^2 - \sigma_i^2 \right)^2\right) 
			= \frac{1}{B_n^4}\sum_{i=1}^{n} \varphi\left( \big(X_i^2 - \sigma_i^2 \big)^2\right) 
			= \frac{1}{B_n^4} \sum_{i=1}^n \varphi\left(X_i^4\right) - \sigma_i^4 \leq  L_{4n}.
		\end{align}

		Now, we start with estimating one of the integrals appearing in \cref{Bai Goetze more general endl. Intgrenzen}. By Chebyshev's inequality, we obtain $1-F_n(x) \leq \Vert U_n \Vert_2^2x^{-2}$ for $x>0$ and $F_n(x) \leq   \Vert U_n \Vert_2^2x^{-2}$ for $x<0.$
		Combining this with \cref{second moment SNS} and \eqref{LLN unbounded rate of convergence example}, it follows
		\begin{align*} 
			\begin{split}
				\int_{B}^{\infty} 1 - F_n(x) dx \leq \frac{\Vert U_n \Vert_2^2}{B} \leq 8L_{4n}^{\nicefrac{1}{4}} + 36n L_{4n}^{\nicefrac{5}{4}}, \qquad \int_{-\infty}^{-B} F_n(x) dx \leq  8L_{4n}^{\nicefrac{1}{4}} + 36n L_{4n}^{\nicefrac{5}{4}}
			\end{split}
		\end{align*}
		leading to 
		\begin{align} \label{unbounded sns bai integral 1}
			\int_{\vert x \vert > B} \vert F_n(x) - F_\omega(x) \vert dx \leq 16L_{4n}^{\nicefrac{1}{4}} + 72n L_{4n}^{\nicefrac{5}{4}}.
		\end{align}
		
		Let us continue with the integrals over the difference of the Cauchy transforms. The resolvent identity implies
		\begin{align} \label{unbounded diff CT U_n and S_n}
			\begin{split}
				\vert G_n(z) - G_{S_n}(z)\vert  &=  \big\vert	\varphi\left((z-U_n)^{-1} - (z-S_n)^{-1} \right) \big\vert = \left \vert 	\varphi\left((z-U_n)^{-1}(U_n-  S_n)(z-S_n)^{-1} \right) \right\vert  \\ & \leq \frac{1}{(\Im z)^2} \Vert U_n - S_n \Vert_1, \qquad z \in \mathbb{C}^+.
			\end{split}
		\end{align}
		Rewriting
		\begin{align*}
			U_n - S_n =\left (V_n^{-\nicefrac{1}{2}} -1 \right) S_nV_n^{-\nicefrac{1}{2}} + S_n\left(V_n^{-\nicefrac{1}{2}}-1\right) = \left(1-V_n^{\nicefrac{1}{2}}\right)U_n + S_nV_n^{-\nicefrac{1}{2}}\left(1-V_n^{\nicefrac{1}{2}}\right),
		\end{align*}
	we get
		\begin{align*}
			\Vert U_n - S_n \Vert_1 \leq \left\Vert  1-V_n^{\nicefrac{1}{2}} \right\Vert _2 \left( \Vert U_n \Vert_2 + \Big\Vert S_nV_n^{-\nicefrac{1}{2}} \Big\Vert_2\right).
		\end{align*}
		By functional calculus, we deduce that $\smash{1+ V_n^{\nicefrac{1}{2}}}$ is invertible with inverse $\smash{(1+V_n^{\nicefrac{1}{2}})^{-1}\in \mathcal{A}}$ satisfying
		$\smash{\Vert (1+V_n^{\nicefrac{1}{2}})^{-1} \Vert \leq 1}$.
		Together with $\smash{1 -V_n^{\nicefrac{1}{2}} = (1-V_n)(1+V_n^{\nicefrac{1}{2}})^{-1}}$, it follows	
		\begin{align*}
			\big\Vert 1-V_n^{\nicefrac{1}{2}} \big\Vert_2 \leq \Vert  1-V_n\Vert_2  \left\Vert \big(1+V_n^{\nicefrac{1}{2}}\big)^{-1} \right\Vert \leq  \Vert  1-V_n\Vert_2.
		\end{align*}
		Repeating the last two calculations, we arrive at $\smash{\big \Vert 1-V_n^{\nicefrac{1}{2}} \big\Vert_2 \leq  \Vert  1-V_n^2 \Vert_2.}$ With the help of \cref{second moment SNS}, we conclude
		\begin{align*}
			\Vert U_n - S_n \Vert_1 \leq L_{4n}^{\nicefrac{1}{2}} \left( \Vert U_n \Vert_2 + \left\Vert S_nV_n^{-\nicefrac{1}{2}} \right\Vert_2\right) \leq L_{4n}^{\nicefrac{1}{2}}  \left(  4 + 6\sqrt{n} L_{4n}^{\nicefrac{1}{2}} \right) .
		\end{align*}
		Integration yields
		\begin{align*}
			\int_{v}^1 \left \vert G_n(x+iy) - G_{S_n}(x+iy) \right \vert dy \leq \Vert U_n -S_n \Vert_1 \int_v^1 \frac{1}{y^2} dy \leq \frac{\Vert U_n -S_n \Vert_1 }{v} \leq v^{-1}L_{4n}^{\nicefrac{1}{2}}\left(4 +6\sqrt{n} L_{4n}^{\nicefrac{1}{2}}\right)
		\end{align*}
		for any $x \in [-2+\nicefrac{\varepsilon}{2}, 2-\nicefrac{\varepsilon}{2}]$ and 
		\begin{align*}
			\int_{-A}^{2} \vert G_n(u+i) - G_{S_n}(u+i)  \vert du 
			\leq  (2+A)\Vert U_n - S_n\Vert_1 \leq	4L_{4n}^{\nicefrac{1}{4}}  \left(  4 + 6\sqrt{n} L_{4n}^{\nicefrac{1}{2}} \right).
		\end{align*}
		
		It remains to bound the integral contributions from the difference between $G_{S_n}$ and $G_\omega$. In the proof of \cref{bounded_main_non_id}, we used integration by parts and a Berry-Esseen type estimate in order to handle these contributions. However, the same procedure would increase the final rate of convergence in the setting at hand, which is due to the fact that the corresponding Berry-Esseen rate of order $\smash{L_{3n}^{\nicefrac{1}{2}}}$ -- compare to \cref{Berry Esseen CLT unbounded} -- is too weak. Fortunately, a detailed look into the proof of \cref{Berry Esseen CLT unbounded} given in \cite[Theorem 2.6]{Chistyakov2008} solves this problem.
		According to the inequalities (6.38), (6.39), (6.43), and (6.44) in \cite{Chistyakov2008}, we have 
		\begin{align*}
			\int_{-A}^2  \vert G_{S_n}(u+i) - G_{\omega}(u+i) \vert du \leq \int_{-\infty}^\infty  \vert G_{S_n}(u+i) - G_{\omega}(u+i) \vert du \leq C_0L_{3n},
		\end{align*}
		\begin{align*}
			\sup_{x \in [-2, 2]}	\int_{v}^1 \vert G_{S_n}(x+iy) - G_{\omega}(x+iy) \vert dy \leq   C_1 \left( L_{3n}^{\nicefrac{3}{4}} + L_{3n} \vert \! \log L_{3n} \vert \right)
		\end{align*}
		for absolute constants $C_0, C_1>0$, whenever $L_{3n} \leq D_1$ and $v \geq D_2L_{3n}^{\nicefrac{1}{2}}$ are satisfied for appropriately chosen constants $D_1, D_2>0$.

		From now on, assume that the inequalities $L_{4n} < \min\{\nicefrac{1}{16}, \nicefrac{1}{(3D_2)^4}\}$, $L_{3n} < \min\{ D_1, 1\}$ hold. We set  $\smash{v: = D_2L_{4n}^{\nicefrac{1}{4}}}$ and note that we have $\smash{v \geq D_2L_{3n}^{\nicefrac{1}{2}}}$. We obtain
		\begin{align*}
			\int_{-A}^2  \vert G_{n}(u+i) - G_{\omega}(u+i) \vert du \leq 16L_{4n}^{\nicefrac{1}{4}} + 24 \sqrt{n}L_{4n}^{\nicefrac{3}{4}} + C_0L_{4n}^{\nicefrac{1}{2}} 
		\end{align*}
		and 
		\begin{align*} 
			\sup_{x \in [-2+\nicefrac{\varepsilon}{2}, 2-\nicefrac{\varepsilon}{2}]} \int_{v}^1 \vert G_{n}(x+iy) - G_{\omega}(x+iy) \vert dy \leq  D_2^{-1}\left( 4L_{4n}^{\nicefrac{1}{4}} + 6\sqrt{n}L_{4n}^{\nicefrac{3}{4}}\right) + 2C_1L_{4n}^{\nicefrac{1}{4}}.
		\end{align*}
		Combining the last two estimates with \eqref{unbounded sns bai integral 1}, \cref{Bai Goetze more general endl. Intgrenzen}, and \cref{Bai Bem Wigner}, it follows
		\begin{align*}
			\Delta(\mu_n, \omega) \leq C_3\left(L_{4n}^{\nicefrac{1}{4}} + \sqrt{n}L_{4n}^{\nicefrac{3}{4}} + nL_{4n}^{\nicefrac{5}{4}}\right)
		\end{align*}
		for some constant $C_3>0$ (independent of $n$), whenever the conditions on $L_{4n}$ and $L_{3n}$ given above are satisfied.
		
		Lastly, note that if at least one of these conditions does not hold, we have $\smash{L_{4n}^{\nicefrac{1}{4}} + \sqrt{n}L_{4n}^{\nicefrac{3}{4}} + nL_{4n}^{\nicefrac{5}{4}}> C_4}$
		for some $C_4>0$ by using the inequality $L_{4n} \geq L_{3n}^2.$ Consequently, we get
		\begin{align*}
			\Delta(\mu_n, \omega) \leq \max\left\{C_3, C_4^{-1}\right\}\left(L_{4n}^{\nicefrac{1}{4}} + \sqrt{n}L_{4n}^{\nicefrac{3}{4}} + nL_{4n}^{\nicefrac{5}{4}}\right)
		\end{align*}
		for all $n \geq n_0.$
		
		In order to prove the last claim, assume that $\lim_{n \rightarrow \infty} \sqrt{n}L_{4n} = 0$ holds. Due to \eqref{unbounded diff CT U_n and S_n}, we obtain
		\begin{align*}
			\vert G_n(z) - G_{S_n}(z) \vert \leq \frac{1}{(\Im z)^2} L_{4n}^{\nicefrac{1}{2}}  \left(  4 + 6\sqrt{n} L_{4n}^{\nicefrac{1}{2}} \right) \rightarrow 0
		\end{align*}
	as $n \rightarrow \infty$ for all $z \in \mathbb{C}^+$.  Together with $\lim_{n \rightarrow \infty} G_{S_n}(z) = G_\omega(z)$ for all $z \in \mathbb{C}^+$ following from the free Lindeberg CLT, the claimed weak convergence of $\mu_n$ to $\omega$ as $n \rightarrow \infty$ is immediate.
	\end{proof}	
	
	Let us continue with the proof of \cref{unbounded main id} considering the special case of identical distributions.
	\begin{proof}[Proof of \cref{unbounded main id}]
		Let $(X_i)_{i \in \mathbb{N}}$ be given as in \cref{unbounded main id}. Keeping the notation introduced in \cref{unbounded_main_BE}, we have $B_n^2 = n > 0$, $L_{4n} = \Vert X_1 \Vert_4^4 n^{-1}$. Moreover, Lindeberg's condition is satisfied. By the last-mentioned theorem, we find $n_0 \in \mathbb{N}$ such that the self-normalized sum $U_n$ exists in $\Aff(\mathcal{A})$ for $n \geq n_0$ and its analytic distribution $\mu_n$ satisfies 
		\begin{align*}
			\Delta(\mu_n, \omega) \leq C_0\frac{\Vert X_1\Vert_4 + \Vert X_1 \Vert_4^3 + \Vert X_1 \Vert_4^5}{n^{\nicefrac{1}{4}}} \leq 3C_0 \frac{\Vert X_1 \Vert_4^5}{n^{\nicefrac{1}{4}}}
		\end{align*} 
		for all $n \geq n_0$ with $C_0>0$ being the constant from \cref{unbounded_main_BE}. The weak convergence of $\mu_n$ to $\omega$ as $n \rightarrow \infty$ follows immediately from $\lim_{n \rightarrow \infty} \Delta(\mu_n, \omega) = 0.$
	\end{proof}

	We end the discussion of unbounded self-normalized sums with two comments.
	\begin{rem} \label{SNS unbounded Endbemerkung}
		\begin{enumerate}[(i)]
			\item  In view of the rates of convergence established in the context of the free CLT in \cref{Berry Esseen CLT unbounded}, the rates obtained in \cref{unbounded_main_BE} and \cref{unbounded main id} do not seem to be optimal. The reason why we were not able to go beyond these non-optimal rates in our proofs is twofold: First, the bound on the difference between the Cauchy transforms $G_n(z)$ and $G_{S_n}(z)$ in \eqref{unbounded diff CT U_n and S_n} is too weak -- in particular due to the factor $(\Im z)^{-2}$. However, any attempt to improve this bound (for instance by proceeding as in the proof of \cref{bounded_main_non_id}) was not successful since evaluations of the (extended) functional $\varphi$ in unbounded random variables are rather delicate to handle. Second, the support of $\mu_n$ a priori grows with $n$, which forces us to choose the parameters $A$ and $B$ in \cref{Bai Goetze more general endl. Intgrenzen} dependently on $n$. 
			\item  In \cref{SNS bounded Remark Yin}, we saw that the approach via Cauchy transforms  can be substituted by a convergence result for rational expressions in strongly convergent bounded random variables. Recently, Collins et al.\@ \cite[Proposition 28]{Collins2022} established a similar result in the unbounded setting. However, the assumptions made in the last-cited proposition are stronger than the ones needed in \cref{unbounded_main_BE}: Among others, we would have to assume that the underlying sequence of random variables is contained in $\mathcal{L}^k(\mathcal{A})$ for any $1 \leq k < \infty$ in order to be able to derive weak convergence of the analytic distribution of the self-normalized sum to Wigner's semicircle law.
		\end{enumerate}
	\end{rem}


\begin{thebibliography}{DNPV12}
		\itemsep=\smallskipamount
		
		\bibitem[Bai93]{Bai1993} Z. D. Bai.  Convergence Rate of Expected Spectral Distributions of Large Random Matrices. Part I. Wigner Matrices.
		{\em Ann. Probab.},  21(2):625--648, 1993. \url{https://doi.org/10.1214/aop/1176989261}
		
		\bibitem[BBG96]{Bentkus1996a} V. Bentkus, M. Bloznelis, and F. Götze. A Berry-Ess\'een Bound for Student’s Statistic in the Non-I.I.D. Case. {\em J. Theor.  Probab.}, 9(3):765--796, 1996.
		\url{https://doi.org/10.1007/BF02214086}
		
		\bibitem[BG96]{Bentkus1996} V. Bentkus and F. Götze. The Berry-Esseen bound for student’s statistic. {\em Ann. Probab.}, 24(1):491--503, 1996. \url{https://doi.org/10.1214/aop/1042644728}
		
		
		\bibitem[BP96]{Bercovici1996} H. Bercovici and V. Pata. The law of large numbers for free identically distributed random variables. { \em Ann. Probab.}, 24(1):453--465, 1996. 
		\url{https://doi.org/10.1214/aop/1042644726}
		
		\bibitem[BV93]{Bercovici1993} H. Bercovici and D. Voiculescu. Free Convolution of Measures with Unbounded Support. {\em Indiana Univ. Math. J.}, 42(3):733--773, 1993. 
		\url{https://doi.org/10.1512/iumj.1993.42.42033}
		
		\bibitem[BV98]{Bercovici1998} H. Bercovici and D. Voiculescu. Regularity Questions for Free Convolution. 
		In H. Bercovici and C. I. Foias, editors, {\em  Nonselfadjoint Operator Algebras, Operator Theory, and Related Topics}, volume 104 of {\em Operator Theory Advances and Applications}, pages 37--47. Birkhäuser, Basel,  1998. \url{https://doi.org/10.1007/978-3-0348-8779-3_3}
	
		\bibitem[Ber+22]{Bercovici2022} H. Bercovici, C.-H. Ho, J.-C. Wang, and P. Zhong. 
		Superconvergence in free probability limit theorems for arbitrary triangular arrays. {\em Proc. Amer. Math. Soc.}, 150:5253--5265, 2022. \url{https://doi.org/10.1090/proc/16033}
		
		\bibitem[BS12]{Bikchentaev2012} A. M. Bikchentaev and A. A. Sabirova. Dominated convergence in measure on semifinite von Neumann algebras and arithmetic averages of measurable operators. {\em Sib. Math. J.},  53(2):207--216,  2012.
		\url{https://doi.org/10.1134/S0037446612020036}
		
		\bibitem[Bil99]{Billingsley2012} P. Billingsley. {\em Probability and measure}. Wiley Series in Probability
		and Statistics: Probability and Statistics. John Wiley \& Sons, Inc., New York, 1999. 
		
		
		\bibitem[CG08]{Chistyakov2008} G. P. Chistyakov and F. Götze. Limit Theorems in free probability theory. I. {\em Ann. Probab.}, 36(1):54--90, 2008. \url{https://doi.org/10.1214/009117907000000051}
		
		\bibitem[CG13]{Chistyakov2013} G. P. Chistyakov and F. Götze. Asymptotic expansions in the CLT in free probability. {\em Probab. Theory Relat. Fields}, 157:107--156, 2013. \url{https://doi.org/10.1007/s00440-012-0451-2}
		
		\bibitem[CM14]{Collins2014} B. Collins and C. Male. The strong asymptotic freeness of Haar and deterministic matrices. {\em Ann. Scient. \'{E}c. Norm. Sup.}, 47(1):147--163, 2014. \url{https://doi.org/10.24033/asens.2211}
		
		\bibitem[Col+22]{Collins2022} B. Collins, T. Mai, A. Miyagawa, F. Parraud, and S. Yin. Convergence for noncommutative rational functions evaluated in random matrices. {\em Math. Ann.}, 388:543--574, 2022. \url{https://doi.org/10.1007/s00208-022-02530-5}
		
		\bibitem[DDP93]{Dodds1993}
		P. G. Dodds, T. K.-Y. Dodds, and B. de Pagter. Noncommutative Köthe Duality.  {\em Trans. Amer. Math. Soc.}, 339(2):717--750, 1993. \url{https://doi.org/10.1090/S0002-9947-1993-1113694-3}
		
		\bibitem[DP14]{Dodds2014} P. G. Dodds and B. de Pagter. Normed Köthe spaces: A non-commutative viewpoint. {\em Indag. Math.}, 25(2):206--249, 2014.  \url{https://doi.org/10.1016/j.indag.2013.01.009}
	
		
		\bibitem[FK86]{Fack1986} T. Fack and H. Kosaki. Generalized s-numbers of $\tau$-measurable operators. {\em Pac. J. Math.}, 123(2):269--300, 1986. \url{https://doi.org/10.2140/pjm.1986.123.269}
		
		
		\bibitem[GGM97]{Gine1997} E. Gin\'e, F. Götze, and D. M. Mason. When is the Student $t$-statistic asymptotically standard normal?.  {\em Ann. Probab.}, 25(3):1514--1531, 1997. \url{https://doi.org/10.1214/aop/1024404523}
		
		\bibitem[GT03]{Goetze2003} F. Götze and A. Tikhomirov. Rate of convergence to the semi-circular law.  {\em  Probab. Theory Relat. Fields}, 127:228--276, 2003. \url{https://doi.org/10.1007/s00440-003-0285-z}
		
		\bibitem[Hia20]{Hiai2020} F. Hiai. Concise lectures on selected topics of von Neumann algebras. arXiv:2004.02383, 2020. 
		
		\bibitem[KR97]{Kadison1997} R. V. Kadison and J. R. Ringrose. {\em Fundamentals of the Theory of Operator Algebras. Volume I: Elementary Theory}, volume 15 of {\em Graduate Studies in Mathematics}. American Mathematical Society, Providence, R.I, 1997. 
		
		\bibitem[Kar07a]{Kargin2007a} V. Kargin. 
		Berry--Esseen for Free Random Variables. { \em J. Theor. Probab.}, 20:381--395, 2007. \url{https://doi.org/10.1007/s10959-007-0097-7}
		
		\bibitem[Kar07b]{Kargin2007} V. Kargin. On superconvergence of sums of free random variables.  {\em Ann. Probab.}, 35(5):1931--1949, 2007. \url{https://doi.org/10.1214/009117906000001132}
		
		\bibitem[LP97]{Lindsay1997} J. M. Lindsay and V. Pata. Some weak laws of large numbers in noncommutative probability. {\em Math. Z.}, 226:533--543, 1997. \url{https://doi.org/10.1007/PL00004356}
		
		\bibitem[Log+73]{Logan1973} B. F. Logan, C. L. Mallows, S. O. Rice, and L. A. Shepp. Limit Distributions of Self-normalized Sums.  {\em Ann. Probab.}, 1(5):788--809, 1973. \url{https://doi.org/10.1214/aop/1176996846}
		
		\bibitem[MS23]{Maejima2023} M. Maejima and N. Sakuma. Rates of convergence in the free central limit theorem. {\em Stat. Probab. Lett.}, 197:109802, 2023. 
		\url{https://doi.org/10.1016/j.spl.2023.109802}
	
		
		\bibitem[Mal81]{Maller1981} R. A. Maller. A Theorem on Products of Random Variables, With Application to Regression. {\em Austrian J. Stat.}, 23(2):177--185, 1981.  \url{https://doi.org/10.1111/j.1467-842X.1981.tb00775.x}
		
		\bibitem[Mas05]{Mason2005} D. M. Mason. The Asymptotic Distribution of Self-Normalized Triangular Arrays. {\em J. Theor. Probab.}, 18(4):853--870, 2005. \url{https://doi.org/10.1007/s10959-005-7529-z}
		
	
		\bibitem[MS17]{Mingo2017} J. A. Mingo and R. Speicher. {\em Free Probability and Random Matrices}, volume 35 of {\em Fields Institute Monographs}. Springer New York, NY, 2017. \url{https://doi.org/10.1007/978-1-4939-6942-5}
		
		\bibitem[MN36]{Murray1936} F. J. Murray and J. v. Neumann. On Rings of Operators. {\em Ann. Math.}, 37(1):116--229, 1936. \url{https://doi.org/10.2307/1968693}
		
		\bibitem[Nel74]{Nelson1974} E. Nelson. Notes on non-commutative integration. {\em J. Funct. Anal.}, 15(2):103--116, 1974. \url{https://doi.org/10.1016/0022-1236(74)90014-7}
		
		\bibitem[Neu23]{Neufeld2023} L. Neufeld. Weighted Sums and Berry-Esseen type estimates in Free Probability Theory. arXiv:2303.06489, 2023.
		
		\bibitem[NS06]{Nica2006} A. Nica and R. Speicher. {\em Lectures on the Combinatorics of Free Probability}, volume 335 of {\em 
		London Mathematical Society Lecture Note Series}. Cambridge University Press, 2006. \url{https://doi.org/10.1017/CBO9780511735127}
		
		\bibitem[PLS09]{Pena2009} V. H. de la Peña, T. L. Lai, and Q. Shao. {\em Self-Normalized Processes}. Probability and Its Applications. Springer Berlin, Heidelberg, 2009. \url{https://doi.org/10.1007/978-3-540-85636-8}
		
		\bibitem[Pet75]{Petrov1975}  V. V. Petrov. {\em Sums of Independent Random Variables}, volume 82 of {\em Ergebnisse der Mathematik und ihrer Grenzgebiete. 2. Folge}. Springer Berlin, Heidelberg, 1975. \url{https://doi.org/10.1007/978-3-642-65809-9}
		
		\bibitem[Rei98]{Reich1998} H. Reich. {\em Group von Neumann algebras and related topics}. PhD thesis, Universität Göttingen, 1998.
		
		\bibitem[Sch12]{Schmuedgen2012} K. Schmüdgen. { \em Unbounded Self-adjoint Operators on Hilbert Space}, volume 265 of {\em Graduate Texts in Mathematics}, Springer Dordrecht, 2012. \url{https://doi.org/10.1007/978-94-007-4753-1}
		
		\bibitem[SW13]{Shao2013} Q. Shao and Q. Wang. Self-normalized limit theorems: A survey.  { \em Probab. Surveys}, 10:69--93, 2013. \url{https://doi.org/10.1214/13-PS216}
		
		\bibitem[Sha18]{Shao2018} Q. Shao. On necessary and sufficient conditions for the self-normalized central limit theorem. {\em Sci. China Math.}, 61(10):1741--1748, 2018. \url{https://doi.org/10.1007/s11425-018-9368-3}
		
		\bibitem[Tem09]{Tembo2009} I. D. Tembo. Invertibility in the Algebra of $\tau$-measurable Operators. In J. J. Grobler, L. E. Labuschagne, M. Möller, editors, {\em Operator Algebras, Operator Theory and Applications}, volume 195 of {\em Operator Theory: Advances and Applications}, pages 245--256. Birkhäuser, Basel, 2009. \url{https://doi.org/10.1007/978-3-0346-0174-0_12}
		
		\bibitem[Voi86]{Voiculescu1986} D. Voiculescu. Addition of certain non-commuting random variables. {\em J. Funct. Anal.}, 66(3):323--346, 1986.  \url{https://doi.org/10.1016/0022-1236(86)90062-5}
		
		\bibitem[Yin18]{Yin2018}  S. Yin. Non-commutative rational functions in strong convergent random variables. {\em Adv. Oper. Theory}, 3(1):178--192, 2018. \url{https://doi.org/10.22034/aot.1702-1126} 
			\end{thebibliography}
\end{document}